\tikzstyle{Vertex}=[fill={rgb,255: red,142; green,142; blue,142}, draw=black, shape=rectangle]
\tikzstyle{white vertex}=[fill=white, draw=black, shape=circle]
\tikzstyle{grey vertex}=[fill={rgb,255: red,142; green,142; blue,142}, draw=black, shape=circle]
\tikzstyle{black vertex}=[fill=black, draw=black, shape=circle]
\tikzstyle{small black}=[fill=black, draw=black, shape=circle, scale=.5pt, tikzit draw=white]
\tikzstyle{black empty}=[fill=white, draw=black, shape=circle]
\tikzstyle{small black empty}=[fill=white, draw=black, shape=circle, scale=.5pt, tikzit draw={rgb,255: red,156; green,156; blue,156}, tikzit fill=white]
\tikzstyle{double blue edge}=[->, draw={rgb,255: red,6; green,118; blue,255}, latex-latex]
\tikzstyle{automorphism}=[-{to}]
\tikzstyle{new edge style 0}=[->, draw={rgb,255: red,6; green,118; blue,255}, -latex]
\tikzstyle{Edge}=[-]
\tikzstyle{Arc}=[->, -latex]
\tikzstyle{blue_edge}=[-, draw={rgb,255: red,0; green,146; blue,146}, line width=1.2pt]
\tikzstyle{edge red}=[-, fill=none, draw=red]
\tikzstyle{green edge}=[-, draw={rgb,255: red,0; green,168; blue,0}, -latex]
\tikzstyle{big arc}=[->, line width=1.2pt, -latex]
\tikzset{negated/.style={
        decoration={markings, mark= at position 0.5 with {\node[transform shape] (tempnode) {$\backslash$};}}
                ,postaction={decorate}}}
\newcommand{\Ra}{\Rightarrow}
\newcommand{\La}{\Leftarrow}
\newcommand{\ra}{\rightarrow}
\newcommand{\la}{\leftarrow}
\newcommand{\rra}{\twoheadrightarrow}
\newcommand{\AND}{\text{ and }}
\newcommand{\OR}{\text{ or }}
\newcommand{\Sg}{\Sigma}
\newcommand{\tx}[1]{\text{#1}}
\newcommand{\set}[1]{\{#1\}}
\newcommand{\core}[1]{\overbracket[0.3pt][0pt]{\overbracket[0.3pt][0pt]{\mkern-2.9mu #1}}}
\renewcommand{\cal}[1]{ \mathcal{#1} }
\newcommand{\up}{\nearrow}
\newcommand{\dw}{\searrow}
\newcommand{\F}{\cal{F}}
 \newcommand{\tikzLarrow}{%
 	\tikz[->]{
 		\draw[-{Triangle[angle=60:4.5pt]}, line width=0.6pt] 
 		(1em,0) -- (0,0);
 	}%
 }
\newcommand{\tikzRarrow}{%
  \tikz[<-]{
    \draw[-{Triangle[angle=60:4.5pt]}, line width=0.6pt] 
      (0,0) -- (1em,0);
  }%
 }
\tikzset{>=latex}
\newcommand{\tikzdigon}{%
\tikz{%
    \draw[{Triangle[angle=60:4.5pt]}-{Triangle[angle=60:4.5pt]}, line width=0.6pt] 
      (0,0) -- (1.2em,0);
  }%
}%
\newcommand{\nottikzRarrow}{%
  \tikz{%
    \draw[-{Triangle[angle=60:4.5pt]}, line width=0.6pt] (0,0) -- (1em,0);
    \draw[line width=0.5pt] (0.3em,-0.3em) -- (0.7em,0.3em);
  }%
}
\newcommand{\rvec}[1]{\reflectbox{\ensuremath{\vec{\reflectbox{\ensuremath{#1}}}}}}
\newcommand{\digon}{\tikzdigon}
\newcommand{\cone}{{\!}^{\triangleleft}}
\newcommand{\mino}[1]{\left|#1\right|_{_{min}}}
\newcommand{\apex}[1]{\star_{_{#1}}}
\newtheorem{theorem}{Theorem}
\newtheorem{lemma}[theorem]{Lemma}
\newtheorem{proposition}[theorem]{Proposition}
\newtheorem{corollary}[theorem]{Corollary}
\newtheorem{definition}[theorem]{Definition}
\newtheorem{example}[theorem]{Example}
\newtheorem*{observation}{Observation}
\date{}
\begin{document}

\makeatletter
\def\ps@pprintTitle{%
	\let\@oddhead\@empty
	\let\@evenhead\@empty
	\def\@oddfoot{\reset@font\hfil\thepage\hfil}
	\let\@evenfoot\@oddfoot
}
\makeatother
	
\begin{frontmatter}
	
\title{On core of categorical product of (di)graphs}
\author{Reza Naserasr} 
\author{Cyril Pujol} 

\address{Universit\'e Paris Cit\'{e}, CNRS, IRIF, F-75013, Paris, France.}
\address{Emails:\text{\{reza,cpujol\}}@irif.fr}
\begin{abstract}
	The core of a graph is the smallest graph (in terms of number of vertices) to which it is homomorphically equivalent.
	The question of the possible order of the core of the tensor product (also known as categorical, Heidetnemi or direct product) of two graphs captures some well known problems. For instance, the recent counterexample to the Hedetniemi conjecture for 5-chromatic graphs is equivalent to saying that there are cores of order at least 5 whose product has a core of order 4.
	
	In this work, motivated by a question from Leonid Libkin in the area of graph databases, we first present methods of building cores whose categorical product is also a core. Extending on this we present sufficient conditions for a set of cores to have a product which is also a core. Presenting an example of such a family of digraphs, we construct a family of $\binom{2n}{n}$ digraphs, where the number of vertices of each is between $n^2+5n+2$ and $3n^2+3n+2$ and the product is a core. We then present a method of transforming the example into a family of graphs.
\end{abstract}

\begin{keyword}
Graph homomorphisms, cores, graph databases
\end{keyword}

\end{frontmatter}

	\section{Introduction}	
	\label{sec:intro}
		In the categories of combinatorial structures such as automatas, graphs, relational structures, a natural question is that of the growth factor of the product operator. 
		In that regard the category of (di)graphs plays a central role because it is one of the simplest of all such structures and yet captures most of the complexity of the study.
		Let us first recall the basic definitions. Given (di)graphs $G$ and $H$, a \emph{homomorphism} of $G$ to $H$ is a mapping of $V(G)$ to $V(H)$ which preserve adjacencies and in the case of digraphs also the direction of the edges. When there is a homomorphism of $G$ to $H$ we write $G\to H$. Given a class $\mathcal{C}$ of graphs, if every member $G$ of $\mathcal{C}$ satisfies $G\to H$, then we may write $\mathcal{C}\to H$. If $G\to H$ and $H \to G$, then we say $G$ and $H$ are \emph{homomorphically equivalent}, or just equivalent, and write $G \simeq H $. The category of (di)graphs is the category with objects being (di)graphs and \emph{morphisms} being the homomorphisms.		
		
		If $G$ admits a homomorphism to $H$ and every such homomorphism is surjective, then we write $ G \rra H$. Thus $G\to K_k$ is the same as saying that $G$ is $k$-colorable while $G \rra K_k$ is to claim that $G$ is $k$-chromatic.

		The \emph{core} of a (di)graph $G$ , denoted $\core G$, is defined to be smallest (di)graph verifying $G \simeq \core G$. It is not hard to verify that $\core G$ is always a subgraph of $G$ and is unique up to isomorphism, we refer to \cite{HN2004graphs} for a proof and for more details on these subjects.
		
		A \emph{retract} of a graph $G$ is an endomorphism which is also a projection. In other words, $\phi: G \ra G$ verifies $\phi^2 = \phi$. A useful caracterisation of cores is: A (di)graph is a core if its only retract is the identity.
		
		Following the general notion of category, the \emph{categorical product} of two (di)graphs $G$ and $H$ is a (di)graph, denoted $G\times H$, which maps to both $G$ and $H$ and has the property that if a graph $F$ maps to both $G$ and $H$, then it also maps to $G\times H$. Obviously all the graphs equivalent to $G\times H$ work for this definitions. The most typical member of this equivalent class is the following.

		\begin{definition}[Tensor product]
			The categorical or tensor product of two (di)graphs $G$ and $H$ is the (di)graph $G \times H$ whose vertex set is $V_G \times V_H$ where a pair of vertices $(u_1,v_1)$  and $(u_2,v_2)$ forms an edge (or arc) whenever $u_1u_2$ is an edge (arc) of $G$ and $v_1v_2$ is an edge (arc) of $H$. 
		\end{definition}
	
	One of the first observations in the study of category of graphs is that $G\times H$ is indeed the categorical product of (di)graphs. More precisely the diagram in \Cref{fig:tensor_product} commutes. For details we refer to \cite{HN2004graphs}.

	From a category standpoint each graph $G$ is representing the class of graphs which are homomorphically equivalent to it. As the smallest member of this class (in terms of number of vertices) is $\core G$, the core of $G$, it is natural to work with cores. Given a graph (or product of graphs) then one natural question is the order of its core. 
	
	This innocent looking question, surprisingly, captures some well known questions and results. For example observing that the only core on four vertices is the complete graph $K_4$, the recent result of Tardif on Hedetniemi conjecture can be stated as:
	
	\begin{theorem}[Tardif, \cite{T23}]
		There are graphs $G$ and $H$ whose cores are not of order 4, but $\core{G\times H}$ is of order 4.
	\end{theorem}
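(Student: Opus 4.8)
The plan is to recognise the statement as the assertion that $K_4$ is \emph{not multiplicative}, and to realise the two graphs through the exponential-graph construction. Recall from the excerpt that the only core on four vertices is $K_4$; hence $\core{G\times H}$ being of order $4$ means exactly $G\times H\simeq K_4$, while the cores of $G$ and $H$ failing to be of order $4$ is guaranteed as soon as $\chi(G),\chi(H)\ge 5$ (a core of chromatic number at least $5$ cannot equal $K_4$, which is $4$-chromatic). So it suffices to produce graphs $G,H$ with $\chi(G),\chi(H)\ge 5$, each containing a copy of $K_4$, and with $G\times H\to K_4$. Indeed, if both $G$ and $H$ contain $K_4$ then, by the universal property of the categorical product recalled above, $K_4\to G\times H$; combined with $G\times H\to K_4$ this gives $G\times H\simeq K_4$, i.e. $\core{G\times H}=K_4$ is of order $4$.

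The engine is the exponential-graph adjunction of Greenwell--Lov\'asz and El-Zahar--Sauer: for all graphs $G,H$ one has $G\times H\to K_4$ if and only if $G\to K_4^{H}$, where $K_4^{H}$ is the graph whose vertices are the maps $f\colon V(H)\to\{1,2,3,4\}$, with $f$ and $g$ adjacent when $f(u)\neq g(v)$ for every edge $uv$ of $H$. Taking $G:=K_4^{H}$ makes $G\times H\to K_4$ automatic, since $K_4^{H}\to K_4^{H}$. Moreover $K_4^{H}$ always contains $K_4$: the four constant maps $c_1,\dots,c_4$ satisfy $c_i\sim c_j$ iff $i\neq j$, so they span a $K_4$. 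Thus only two things remain to be arranged: (i) $\chi(H)\ge 5$ with $K_4\subseteq H$, and (ii) $\chi(K_4^{H})\ge 5$. Granting these, $G=K_4^{H}$ and $H$ do the job.

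Point (i) is cheap: by the Erd\H{o}s probabilistic construction one can take $H$ to be $5$-chromatic with arbitrarily large girth, and one can plant a $K_4$ by a small local modification that leaves these parameters intact. The whole difficulty lies in (ii) --- showing that raising $K_4$ to the power $H$ does not collapse the chromatic number to $4$. The strategy is to argue by contradiction: fix a proper $4$-colouring $\gamma$ of $K_4^{H}$, restrict it to the copy of $K_4$ spanned by the constant maps (so $\gamma$ permutes the four colours there, and we may normalise $\gamma(c_i)=i$), and then feed into $\gamma$ a carefully chosen family of \emph{perturbed} vertices of $K_4^{H}$ --- maps that agree with a constant except on a small set of coordinates of $H$. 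The adjacencies among these maps encode the edges of $H$, so the colour classes of $\gamma$ can be read back as a combinatorial structure on $H$ itself (a proper colouring, or a homomorphism to $K_4$) whose existence contradicts $\chi(H)=5$. The large girth of $H$ is precisely what prevents distinct perturbations from interfering and forces the propagation of colours to be consistent.

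With (i) and (ii) in hand the argument closes as in the first paragraph. I expect step (ii) to be the genuine obstacle: the reduction to a lower bound on $\chi(K_4^{H})$ is routine, but \emph{proving} that bound --- extracting a low-chromatic obstruction on $H$ from a hypothetical $4$-colouring of the exponential --- is delicate, and making it succeed at the optimal value $5$ (rather than the very large chromatic numbers of Shitov's original counterexample, which would only yield cores of much larger order) is exactly the sharp point that constitutes Tardif's contribution.
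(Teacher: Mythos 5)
First, a point of comparison: the paper does \emph{not} prove this statement. It is imported as a black box from Tardif~\cite{T23}, and the paragraph following it in the paper argues the \emph{reverse} implication, namely that any pair $G,H$ as in the statement must be a counterexample to Hedetniemi's conjecture. So your proposal is an attempt to reprove Tardif's theorem itself, and must be judged as such. Your reduction is correct and standard: since the only core on four vertices is $K_4$, the statement amounts to producing $G,H$ with $\chi(G),\chi(H)\geq 5$, $K_4\subseteq G$, $K_4\subseteq H$ and $G\times H\to K_4$, and taking $G:=K_4^H$ makes $G\times H\to K_4$ automatic, so everything rests on your points (i) and (ii). But (i) as written is inconsistent: you cannot ``plant a $K_4$'' in a graph of large girth and leave ``these parameters intact'' --- adding a $K_4$ makes the girth $3$. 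And the clique in $H$ is unavoidable in your scheme, since a $K_4$ in $G\times H$ projects to a clique in each factor; consequently the mechanism you invoke for (ii) (``the large girth of $H$ is precisely what prevents distinct perturbations from interfering'') is applied to a graph that cannot have large girth. This particular defect is repairable: prove the clique-free statement first (5-chromatic $G_0,H_0$ with $\chi(G_0\times H_0)=4$), then pass to the disjoint unions $G=G_0\cup K_4$, $H=H_0\cup K_4$; every new component of $G\times H$ is $4$-colorable by projection onto a $K_4$ factor, so $\core{G\times H}=K_4$ while the cores of $G$ and $H$ still have more than four vertices.

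The unrepairable gap is (ii). You concede it is ``the genuine obstacle,'' but it is more than that: the claim $\chi\bigl(K_4^H\bigr)\geq 5$ for a suitable $5$-chromatic $H$ \emph{is} Tardif's theorem (the non-multiplicativity of $K_4$), so a proposal that leaves it unproved has not proved anything. Moreover, the mechanism you sketch --- read a hypothetical $4$-coloring of $K_4^H$ back as a $4$-coloring of, or homomorphism from, $H$, contradicting $\chi(H)=5$ --- is precisely the El-Zahar--Sauer scheme~\cite{ES85} by which one proves that $K_3$ \emph{is} multiplicative, and it cannot succeed at this level of generality: the implication ``$\chi(H)\geq 5\Rightarrow\chi(K_4^H)\geq 5$'' is false. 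For example, for $H=K_5$ (which satisfies everything your construction actually guarantees: $5$-chromatic and containing $K_4$), the map sending each $f\in V(K_4^{K_5})$ to its most frequent value, ties broken by order, is a proper $4$-coloring of $K_4^{K_5}$, because two adjacent functions cannot repeat a common value on two distinct coordinates. So any correct argument must exploit a very particular $H$, and the known ones (Shitov~\cite{S19}, Zhu, Tardif~\cite{T23}) do so through delicate constructions involving strong products with cliques, fractional chromatic number and independence-ratio estimates, or wide colorings --- not through a generic propagation-of-colors contradiction driven by $\chi(H)=5$ and girth. As it stands, your proposal correctly reduces the theorem to its hardest point and then stops.
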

	
	To see that such $G$ and $H$ must be counterexamples to the Hedetniemi's conjecture observe that if one of $G$ or $H$ is 3-colorable, then the product is 3-colorable and thus its core cannot be of order 4. Next suppose both $G$ and $H$ are of chromatic number at least 4 and that at least one, say $G$ is of chromatic number 4. If the core of $G\times H$ is of order 4, then it must contain $K_4$, and thus both $G$ and $H$ must contain $K_4$. As $G$ is 4-chromatic and contains $K_4$ its core must be $K_4$, but that contradicts the assumption on $G$. If $G$ and $H$ both have chromatic number at least 5, but the core of product is $K_4$, then we have two 5-chromatic graphs whose product is 4-chromatic, and a counterexample to Hedetniemi's conjecture. We note that first counterexample to this conjecture was build by Shitove~\cite{S19}. After a series of improvement, Tardif\cite{T23} built a counterexample for chromatic number 5.  
	
    The motivation for our work however comes from the database theory. 
    
    \subsection{Categorical product in database theory}
    
    In database theory one way to view recorded data is to see them as an array, in other words relational structure. There are then two types of entries: constants which are fixed or nulls and variables whose value can be set by the user. To search for a specific record (formally a query) then is to find (or count the number of) homomorphisms from this record to the given database. For more on this we refer to \cite{GLS14}.

    The relation between queries in database theory and categorical product was first studied in \cite{AL18}. It is observed that the certain answer to a relational query on an incomplete database $D$ is determined by the (core of) of the categorical product of all possible completions of $D$. The possible order of the core then has a theoretical impact on the efficiency of potential algorithms. It is shown in \cite{AL18} that for relational queries defined in Monadic Second-Order (MSO) logic there are families of databases of size polynomial of $n$ whose core of the product is of size double exponential in order of $n$. For relational queries defined by First-Order (FO) logic, an exponential lower bound was already observed in \cite{ABKR17}. This is based on the tensor product of digraphs; by taking all directed cycles of order $p$ where $p$ is any prime number smaller than $n$, we get a digraph whose core is the directed cycle of length equal to the product of all these prime numbers, and thus, by the prime number theorem, its order is exponential in terms of $n$.  Then in \cite{AL18} it is left as a ``very hard'' problem to decide if the order of the core of product of a set of incomplete databases, each of order $o(n)$, is always bounded by an exponential function of $n$.
    
    In this work, towards addressing this question, in \Cref{sec:BuildingProduct}, we first present sufficient conditions for the product of two cores to be a core and then provide a sufficient condition for the product of a family of cores to be a core. To complete this process, in \Cref{sec:Decreasing_mountains}, we present a family of $\binom{2k}{k}$ digraphs each of order between $k^2+3k+2$ and $k^2+5k+2$ whose product is a core. In \Cref{sec:Digraph-Graph} we present a gadget using which we can transform our digraphs to graphs having the same property. We settle our notation and terminology in the following section. The notion of orthogonality, introduced in \Cref{sec:Extended_Paths}, seems to be of its own interest, as it captures the notion of a graph being multiplicative.
    		
\section{Notation and terminologies}
	A glossary of all symbols used in this work is provided in \Cref{tab:symbols}. We use the terms digraphs, oriented graphs, and graphs, noting the first is a binary (ordered) relation on a set called vertices where the relations are referred to as arcs. The second (oriented graph) is any antisymmetric digraph, that is to say if $u\tikzRarrow v$, then $v \nottikzRarrow u$. Finally the last term, i.e., graph, is a symmetric digraph, that is to say if $u\tikzRarrow v$, then $u \tikzRarrow v$. Loops and parallel edges are not considered in this work.  A statement claimed on ``(di)graphs'' holds on all three classes. 
	
	\begin{definition}
		A \emph{homomorphism} of a (di)graph $G$ to a (di)graph $H$ is a mapping $\phi: V(G) \ra V(H)$ verifying $u \tikzRarrow v \Ra \phi(u) \tikzRarrow \phi(v)$.  When there is such a homomorphism we write $G \ra H$.  When $G \ra H$ and $H \ra G$ both hold we say $G$ and $H$ are hom-equivalent and write $G \simeq H$.
	\end{definition}
	
	Homomorphisms are the \emph{morphisms} of the category of graphs. The relation $G\ra H$ is a quasi order which become a partial order when restricted to equivalent classes of graphs, or on the class of cores. 
	Noting that number of edges are of little importance in this work, and that ``order'' is used for homomorphism order, in this work \emph{size} of a graph $G$ is the number of its vertices, denoted $|G|$. 				
		\begin{definition}[retract]
			A retract is a surjective homomorphism of $G$ to a subgraph $H$ such that it is an identity when restricted on $V(H)$. That is a mapping $\phi: G \ra G$ which verifies $\phi^2 = \phi$.
		\end{definition}
		
		A characterization of cores using the notion of retract is as follows.
		
		\begin{proposition}
			A (di)graph is a core if and only its only retract is the identity. 
		\end{proposition}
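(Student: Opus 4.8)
The plan is to prove both implications directly from the definition of $\core G$ as the smallest (di)graph equivalent to $G$, together with the fact recalled in the excerpt that $\core G$ may be taken to be a subgraph of $G$ and is unique up to isomorphism.

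For the forward direction, I would suppose $G$ is a core and let $\phi\colon G\ra G$ be any retract, with image $H=\phi(G)$. First I would observe that $G\simeq H$: the map $\phi$ witnesses $G\ra H$, while the inclusion of $H$ into $G$ witnesses $H\ra G$. Since $H$ is a subgraph of $G$ we have $|H|\le |G|$, whereas applying the minimality of the core $G=\core G$ to the equivalent graph $H$ gives $|H|\ge|G|$; hence $V(H)=V(G)$. As a retract is the identity on $V(H)$, it is then the identity on all of $G$. This direction is essentially bookkeeping with the core's minimality.

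For the converse I would argue the contrapositive: if $G$ is not a core then it admits a non-identity retract. I realize $C=\core G$ as a proper subgraph of $G$ (so $|C|<|G|$) and fix a homomorphism $f\colon G\ra C$ coming from $G\simeq C$. The key intermediate step is that the restriction $f|_C\colon C\ra C$ is an automorphism of $C$. To see this I would use that any endomorphism $g$ of a core $C$ is an automorphism: its image $g(C)$ is a subgraph equivalent to $C$, so minimality forces $g(C)=C$, making $g$ a vertex-bijection; since an injective map sends distinct edges (arcs) to distinct edges (arcs), $g$ also bijects the edge set of the finite graph $C$ and hence is an isomorphism. Writing $\alpha=f|_C$, the corrected map $\psi=\alpha^{-1}\circ f\colon G\ra C\subseteq G$ restricts to the identity on $C$, so that $\psi(\psi(v))=\psi(v)$ for every $v$ (because $\psi(v)\in C$), i.e.\ $\psi^2=\psi$. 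Thus $\psi$ is a retract, and because $|C|<|G|$ it is not surjective onto $V(G)$ and so is not the identity.

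The main obstacle is the converse, and within it the lemma that endomorphisms of a core are automorphisms together with the correction of $f$ by $\alpha^{-1}$ so that the resulting endomorphism is genuinely idempotent and fixes $C$ pointwise; the forward direction and the remaining manipulations are routine once the core's minimality is invoked.
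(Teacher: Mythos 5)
Your proof is correct. Note that the paper itself states this proposition without proof (it is a standard characterization, deferred to the Hell--Ne\v{s}et\v{r}il reference), so there is no in-paper argument to compare against; your write-up is the standard one. Both directions are sound: the forward direction correctly combines minimality of the core with the fact that an idempotent map is the identity on its image, and the converse correctly isolates the key lemma that every endomorphism of a finite core is an automorphism (image has full order by minimality, hence vertex-injectivity, hence edge-bijectivity by finiteness) and then composes with $\alpha^{-1}$ to make the map $\psi=\alpha^{-1}\circ f$ genuinely idempotent, fixing $\core G$ pointwise while failing surjectivity. This matches the paper's definition of retract as a map $\phi\colon G\to G$ with $\phi^{2}=\phi$, so nothing is missing.
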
 
		In particular, if $G$ is a core, then the only vertex dominating a vertex $u$ is $u$ itself.
        The key operation studied in this work is the following. 
		
		\begin{definition}[Tensor product]
			The tensor product of two (di)graphs $G$ and $H$, denoted $G\times H$, is the graph on  $V(G) \times V(H)$ with the edge set $\set{(u_1,v_1) \tikzRarrow (u_2,v_2) | u_1 \tikzRarrow u_2 \AND v_1 \tikzRarrow v_2}$. 
		\end{definition}
		
		This is the classic notion of product in the category of graphs. Moreover, $G\times H$ is the joint of $G$ and $H$ in the homomorphism order. The product $G\times H$ is known under various names in the literature, for example is often called the categorical product, the direct product or the Hedetniemi product. We refer to \cite{HN2004graphs} for more on the subject.
		
		The fundamental property of the tensor product is:
		\begin{proposition}[Fundamental property of the product]
			Let $X,G,H$ be (di)graphs. Then ${X \ra G \times H} \iff {X \ra G \AND X \ra H}$
		\end{proposition}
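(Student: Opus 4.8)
The plan is to prove the two implications separately, exploiting the explicit description of $G \times H$ from the preceding definition. For the forward direction I would rely on the two coordinate projections, and for the reverse direction on the pairing of a given pair of homomorphisms; in both cases the verification reduces to unwinding the defining edge condition of the tensor product.

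First, for the implication $X \ra G \times H \Rightarrow (X \ra G \AND X \ra H)$, I would introduce the projection maps $\pi_G \colon G \times H \ra G$ and $\pi_H \colon G \times H \ra H$ defined by $\pi_G(u,v) = u$ and $\pi_H(u,v) = v$. The key observation is that each projection is itself a homomorphism: if $(u_1, v_1) \ra (u_2, v_2)$ in $G \times H$, then by definition $u_1 \ra u_2$ in $G$ and $v_1 \ra v_2$ in $H$, so $\pi_G$ and $\pi_H$ preserve adjacency (and, in the directed case, direction). Composing a given homomorphism $X \ra G \times H$ first with $\pi_G$ and then with $\pi_H$ yields $X \ra G$ and $X \ra H$ respectively.

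For the reverse implication, suppose $\phi \colon X \ra G$ and $\psi \colon X \ra H$ are homomorphisms. I would define the pairing $\chi \colon X \ra G \times H$ by $\chi(x) = (\phi(x), \psi(x))$ and check it is a homomorphism: whenever $x_1 \ra x_2$ in $X$, we have $\phi(x_1) \ra \phi(x_2)$ and $\psi(x_1) \ra \psi(x_2)$ since $\phi$ and $\psi$ are homomorphisms, and these are exactly the two conditions that place $(\phi(x_1), \psi(x_1)) \ra (\phi(x_2), \psi(x_2))$ as an edge of $G \times H$.

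I do not expect a genuine obstacle here: the statement is essentially a restatement of the universal property that makes $G \times H$ the categorical product, and the argument is a direct verification against the definition. The only point that warrants care is that the same argument must cover all three settings --- digraphs, oriented graphs, and graphs --- simultaneously; this is automatic because the edge condition of the tensor product already encodes arc direction, so preserving adjacency and preserving direction are handled by the same implication at each step.
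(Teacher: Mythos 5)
Your proof is correct and is exactly the argument the paper has in mind: the paper states this proposition without proof as a standard fact, pointing instead to the universal-property diagram of \Cref{fig:tensor_product}, where the projections $\pi_G,\pi_H$ give the forward direction and the dashed pairing arrow $(\phi_G,\phi_H)$ gives the converse. Your verification that both projections and the pairing preserve arcs (hence covering digraphs, oriented graphs, and graphs uniformly) is the complete, standard justification of that diagram.
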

		
		In fact it is true, moreover, that the diagram of \Cref{fig:tensor_product} commutes for every choice of $\phi_G$,$\phi_H$. In this diagram, $\pi_G$ and $\pi_H$ are the projections of $G\times H$ to $G$ and $H$ respectively. 
		
		
		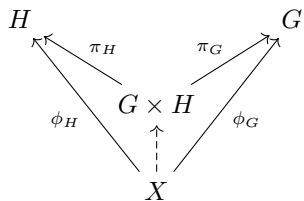
\begin{figure}[htb]
		\centering
			\begin{tikzcd}
				H &                                             & G \\
 				& G\times H \arrow[lu,"\pi_H"'] \arrow[ru,"\pi_G"]             &   \\
  				& X \arrow[ruu,"\phi_G"'] \arrow[luu, "\phi_H"] \arrow[u,dashed] &  
				\end{tikzcd}
		\caption{Universal property of the tensor product}
		\label{fig:tensor_product}
		\end{figure}
		
		Some elementary properties of the core of product are as follows: 
		\begin{proposition}
			If $G,H$ are (di)graphs, 
			
			\begin{itemize}
				\item $\core{G \times G} = \core G$
				\item $G \ra H \Ra \core{G \times H} = \core G $
				\item $ |G \times H| = |G| \times |H|$
				\item $G \rra \core G$
			\end{itemize}
		\end{proposition}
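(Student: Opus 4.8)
The plan is to dispatch the four bullets in roughly increasing order of content, each resting on the fundamental property of the product together with the uniqueness of the core. The third bullet, $|G\times H|=|G|\times|H|$, is purely definitional: the vertex set of $G\times H$ is $V(G)\times V(H)$, so its cardinality is $|G|\cdot|H|$, and since ``size'' is the number of vertices this is exactly the claim. For the first two bullets I would actually prove the second one first, $G\ra H\Ra\core{G\times H}=\core G$, and recover the first as a special case. To show $G\times H\simeq G$ under the hypothesis $G\ra H$, note that the projection $\pi_G\colon G\times H\ra G$ gives one direction. For the reverse direction I apply the fundamental property with $X=G$: since $\mathrm{id}\colon G\ra G$ and, by hypothesis, $G\ra H$, we obtain $G\ra G\times H$. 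Hence $G\simeq G\times H$, and because the core is the unique smallest member of a hom-equivalence class, $\core{G\times H}=\core G$. Taking $H=G$ (using the identity $G\ra G$) then yields the first bullet $\core{G\times G}=\core G$ immediately.

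For the fourth bullet, $G\rra\core G$, I must verify both that a homomorphism $G\ra\core G$ exists and that every such homomorphism is surjective. Existence is immediate from $G\simeq\core G$. For surjectivity I would fix a homomorphism $g\colon\core G\ra G$ (which exists since $\core G\simeq G$) and, given any $\phi\colon G\ra\core G$, consider the composite $\phi\circ g\colon\core G\ra\core G$. This is an endomorphism of a core, hence an automorphism, and in particular surjective; surjectivity of $\phi\circ g$ onto $V(\core G)$ forces $\phi$ itself to be surjective. Therefore every homomorphism $G\ra\core G$ is onto, which is exactly $G\rra\core G$.

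The only ingredient that is not a formal manipulation, and thus the main obstacle, is the assertion that every endomorphism of a core is an automorphism. If this is not taken as known (it is standard, and can be cited from \cite{HN2004graphs}), I would derive it from the retract characterization stated above together with finiteness: for an endomorphism $\phi$ of a finite (di)graph the descending chain of images $\phi(G)\supseteq\phi^2(G)\supseteq\cdots$ stabilizes, so some power $e=\phi^k$ is idempotent, $e^2=e$, making $e$ a retract onto its image; as the only retract of a core is the identity we get $\phi^k=\mathrm{id}$, whence $\phi$ is invertible, i.e.\ an automorphism. This finiteness-dependent step is the sole subtlety; every other claim is a direct consequence of the fundamental property and of the uniqueness of the core within a hom-equivalence class.
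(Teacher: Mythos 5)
Your proof is correct: all four bullets follow as you argue, and the one nontrivial ingredient (every endomorphism of a finite core is an automorphism, derived via the stabilizing chain of images and the idempotent power $\phi^k$, then the retract characterization) is handled soundly. The paper itself states this proposition without proof, treating it as elementary background (the relevant facts are in \cite{HN2004graphs}), so there is no authorial argument to compare against; yours is the standard verification using exactly the tools the paper sets up, namely the fundamental property of the product and the uniqueness of the core within a hom-equivalence class.
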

		
		A less trivial but still well-known fact is the following.
		
		\begin{proposition}
			Given integers $n_1,n_2, \dots, n_k$ the core of the product of the directed cycles $C_{n_1},C_{n_2}, \dots, C_{n_k},$ is the directed cycle $C_r$ where $r=lcm(n_1,n_2, \dots, n_k)$ is the least common multiple of $n_1,n_2, \dots, \text{ and } n_k$.
		\end{proposition}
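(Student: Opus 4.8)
The plan is to exploit a special structural feature of products of directed cycles: although the product of two arbitrary digraphs can be complicated, the product of directed cycles decomposes into a disjoint union of directed cycles. Writing each $C_{n_i}$ on vertex set $\mathbb{Z}/n_i\mathbb{Z}$ with arcs $a \to a+1$, the definition of the tensor product shows that the only arcs of $C_{n_1} \times \cdots \times C_{n_k}$ are those incrementing \emph{every} coordinate simultaneously, i.e. $(a_1, \ldots, a_k) \to (a_1 + 1, \ldots, a_k + 1)$. Hence following arcs from any vertex traces out a directed cycle, and the orbit of $(a_1, \ldots, a_k)$ under the shift map closes up after exactly $t$ steps, where $t$ is the least positive integer with $t \equiv 0 \pmod{n_i}$ for all $i$; that is, $t = r = \mathrm{lcm}(n_1, \ldots, n_k)$. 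So every connected component is a copy of $C_r$ and the whole product is a disjoint union of $\left(\prod_{i} n_i\right)/r$ copies of $C_r$.

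First I would record the elementary fact that, for directed cycles, $C_m \to C_n$ holds if and only if $n \mid m$: a homomorphism must send each arc $a \to a+1$ to an arc, forcing $\phi(a+1) = \phi(a)+1 \pmod n$, and traversing the whole cycle returns $\phi(0) + m \equiv \phi(0) \pmod n$, i.e. $n \mid m$; conversely reduction mod $n$ works. Given the decomposition above, mapping each component identically onto a fixed $C_r$ yields $C_{n_1} \times \cdots \times C_{n_k} \to C_r$. For the reverse direction I would invoke the fundamental property of the product: $C_r \to C_{n_1} \times \cdots \times C_{n_k}$ is equivalent to $C_r \to C_{n_i}$ for every $i$, which by the divisibility criterion is equivalent to $n_i \mid r$ for all $i$ --- exactly the defining property of the least common multiple. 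Combining the two directions gives $C_{n_1} \times \cdots \times C_{n_k} \simeq C_r$.

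It remains to check that $C_r$ is itself a core, so that it is genuinely the core of the product. Here I would use the retract characterization proved earlier: any endomorphism $\phi$ of $C_r$ satisfies $\phi(a+1) = \phi(a)+1$, hence is a rotation $a \mapsto a + c$ for some fixed $c$, and every such map is a bijection. Thus the only idempotent endomorphism is the identity, $C_r$ has no proper retract, and so it is a core. Therefore $\core{C_{n_1} \times \cdots \times C_{n_k}} = C_r$.

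I do not expect a serious obstacle; the one step carrying all the content is the structural observation that the product of directed cycles is a disjoint union of directed cycles, each of length exactly $r$. Once this is in hand every remaining step is routine, and the only point requiring a little care is verifying that each orbit has length precisely $\mathrm{lcm}(n_1, \ldots, n_k)$ rather than a proper divisor. This follows from the fact that the shift map acts \emph{freely}: $s^t$ fixes a vertex if and only if $t \equiv 0 \pmod{n_i}$ for all $i$, a condition independent of the vertex, so every orbit has the same size $r$.
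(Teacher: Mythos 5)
Your proof is correct and complete. Note that the paper offers no proof of this proposition at all --- it is introduced there as ``a less trivial but still well-known fact'' --- so there is nothing to compare against; your argument is the standard one, and every step is sound: the simultaneous-shift structure of arcs in $C_{n_1}\times\cdots\times C_{n_k}$ decomposes the product into disjoint copies of $C_r$ (with the freeness observation correctly ruling out orbits of length a proper divisor of $r$), the divisibility criterion $C_m \to C_n \iff n \mid m$ together with the fundamental property of the product gives the homomorphic equivalence with $C_r$, and the fact that every endomorphism of $C_r$ is a rotation shows $C_r$ is itself a core, which pins down the core of the product exactly.
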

		
		Combined with the prime number theorem then this is enough to show the existence of oriented graphs of order at most $n$ whose core of product has size exponential in terms of $n$.

	\begin{table}[htb]
			\centering
			\begin{tabular}{|c|c|c|}
				\hline
				Symbol & Meaning & Definition \\
				\hline
				\hline
				
				$\vec P_k$ & directed path of length $k-1$ &\\
				\hline
				$u \tikzRarrow v$ & $v$ is an out-neighbor of $u$ &\\ 
				\hline
				$u \tikzdigon v$ & $u \tikzRarrow v$ and $u \tikzLarrow v$ &\\ 
				\hline
				$G \ra H$ & there exists a hom. of $G$ to $H$  &\\ 
				\hline
				$H \la G$ & there exists a hom.  of $G$ to $H$  &\\ 
				\hline
				$G \not \ra H$ & No hom. of $G$ to $H$ & \\
				\hline
				$G \simeq H$ & $G \ra H$ and $G \la H$  &\\ 
				\hline
				$G \cong H$ & $G \AND H$ are isomorphic  &\\ 
				\hline
				$G\rra H$ &$G \ra H$ and any hom. is surjective& \\
				\hline
				$G\stackrel{incl.}{\hookrightarrow} H$ & inclusion of $G$ in $H$ & \\
				\hline
				$|G|$ & order of $G$& \\
				\hline 	\\[-1.1em]
				$\core G$ & core of $G$& \Cref{sec:intro}\\
				\hline
				$\pi_G$ &  projection of $G\times H$ to $G$ & \Cref{fig:tensor_product}\\
				\hline
				$G \perp H$ &  $G \times H \rra G \; $ and $ \; G \times H \rra H$ & \Cref{def:orthogonal}\\
				\hline
				$h(P)$ &  height of the $k$-b-path $P$ & \Cref{subsec:oriented_paths}\\
				\hline
				$P \vee Q$ & sum of the $k$-b-paths $P \AND Q$ & \Cref{def:sum_paths}\\
				\hline
				$\mino{P}$ &  order of the smallest component of $P$ & \Cref{def:mino}\\
				\hline
				$P \wedge Q$ &  product of the $k$-b-paths $P \AND Q$& \Cref{def:path_prod}\\
				\hline
				$\cone G$, $\apex G$ &  cone of $G$ and its apex & \Cref{def:cone}\\
				\hline
				$G \Bowtie H$ &  complete join of $G$ and $H$ & \Cref{sec:Digraph-Graph}\\
				\hline
			\end{tabular}
			\caption{Glossary of symbols}
			\label{tab:symbols}
		\end{table}
	
	\section{Orthogonality and algebra of Paths}\label{sec:Extended_Paths}

	In this section we first introduce the concept of orthogonality for graphs. We show how this concept, combined with the notion of orthogonality invariant functions, captures some classic notions such as the notion of multiplicative graphs. We then study the algebra of homomorphism on a certain family of oriented graphs.

	\subsection{Orthogonal (di)graphs}
	Toward our main goal, a first thing is to construct a large family $\F$ of digraphs verifying 
		$$\forall \,  G,H \in \F, \core{G\times H} \rra \core{G} \AND \core{G\times H} \rra \core{H}$$
	 
	The following property is key in building such families:
	
	\begin{proposition}\label{prop:CoreProducNotSurjective}
		For each pair $G$ and $H$ of (di)graphs, there are two subgraphs $G'$ and $H'$ such that first of all $G\times H \simeq G'\times H'$, secondly, any mapping of $\core{G\times H}$ to either $G'$ or $H'$ is surjective.
		
	\end{proposition}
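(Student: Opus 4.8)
The plan is to realize $G'$ and $H'$ as homomorphic images of the core $C:=\core{G\times H}$ sitting inside $G$ and $H$, chosen as small as possible. Since $C\simeq G\times H$, there is a homomorphism $C\ra G\times H$, which composed with the projection $\pi_G$ gives a homomorphism $C\ra G$; hence the set of subgraphs of $G$ occurring as the image of some homomorphism $C\ra G$ is nonempty. As $G$ is finite, I would pick a homomorphism $\psi_G:C\ra G$ whose image has the fewest possible vertices, and let $G'$ be the subgraph of $G$ induced on $\psi_G(V(C))$; define $H'$ symmetrically from a homomorphism $\psi_H:C\ra H$ of minimal image. Note that $\psi_G$ is then a homomorphism $C\ra G'$ (and $\psi_H$ a homomorphism $C\ra H'$), since $G'$ is induced.

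Next I would verify the surjectivity clause. Let $\phi:C\ra G'$ be an arbitrary homomorphism. Composing $\phi$ with the inclusion $G'\hookrightarrow G$ produces a homomorphism $C\ra G$ whose image has vertex set $\phi(V(C))\subseteq V(G')$. Minimality of $|G'|$ forces $|\phi(V(C))|\geq |V(G')|$, whereas the inclusion $\phi(V(C))\subseteq V(G')$ gives the reverse inequality; therefore $\phi(V(C))=V(G')$ and $\phi$ is surjective. The identical argument applied to $H'$ shows that every homomorphism $C\ra H'$ is surjective, which is exactly the second assertion of the proposition.

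It remains to establish $G\times H\simeq G'\times H'$. One direction is immediate: $G'$ and $H'$ are subgraphs of $G$ and $H$, so $G'\times H'$ is a subgraph of $G\times H$ and thus $G'\times H'\ra G\times H$. For the converse, the homomorphisms $\psi_G:C\ra G'$ and $\psi_H:C\ra H'$ feed the fundamental property of the product to yield $C\ra G'\times H'$; composing with a homomorphism $G\times H\ra C$ (which exists because $G\times H\simeq C$) gives $G\times H\ra G'\times H'$. Combining the two directions yields the claimed equivalence.

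I do not expect a deep obstacle here. The only points requiring care are that ``smallest image'' be well defined, which is guaranteed by finiteness of $G$ and $H$, and that choosing $G'$ induced keeps $G'\times H'$ a subgraph of $G\times H$ so that the subgraph and fundamental-property steps apply verbatim. The genuine crux is the simple observation that minimizing the image size of a homomorphism automatically forces every homomorphism into that image to be surjective.
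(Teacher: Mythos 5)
Your proof is correct and takes essentially the same approach as the paper: the paper's proof picks minimal subgraphs $G'$ and $H'$ of $G$ and $H$ to which $G\times H$ (equivalently, its core) maps, and that minimality is exactly what forces every homomorphism of $\core{G\times H}$ into $G'$ or $H'$ to be surjective. Your write-up simply supplies the details the paper leaves as ``readily checked,'' including the equivalence $G\times H\simeq G'\times H'$ via the fundamental property of the product.
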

	
    \begin{proof}
		Since $G\times H$ maps to both $G$ and $H$, there are minimal subgraphs $G'$ and $H'$ of $G$ and $H$, respectively, to which $G\times H$ maps. The claim can be checked readily now.		
	\end{proof}
	
	A different way to state this proposition is that given any pair $G,H$ of (di)graphs, one can delete some vertices of $G$ and $H$ without changing the core of the product but obtaining the property that the core of the product now maps only surjectively to both of the subgraphs obtained. The essence of the claim is presented in diagram of Figure~\ref{fig:DiagramP'Q'} where the arrows that are implied from transitivity are not drawn.


	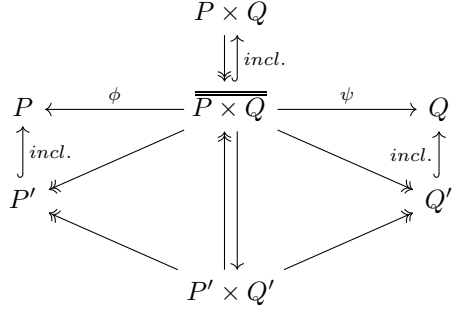
\begin{figure}
		
\centering
\begin{tikzcd}
                             &  & P \times Q \arrow[d, two heads, shift right]                                                                                                                                           &  &                            \\
P                            &  & \core{P \times Q} \arrow[u, "incl."', hook, shift right] \arrow[ll, "\phi"'] \arrow[rr, "\psi"] \arrow[lld, two heads] \arrow[rrd, two heads] \arrow[dd, shift left] &  & Q                          \\
P' \arrow[u, "incl."', hook] &  &                                                                                                                                                                                        &  & Q' \arrow[u, "incl.", hook] \\
                             &  & P' \times Q' \arrow[llu, two heads] \arrow[rru, two heads] \arrow[uu, two heads, shift left]                                                                                           &  &                           
\end{tikzcd}
	
	\caption{Diagram of Proposition~\ref{prop:CoreProducNotSurjective}}
	\label{fig:DiagramP'Q'}   
	\end{figure}
	
	\begin{definition}
	\label{def:orthogonal}
		We say that two (di)graphs $G$, and $H$ are \emph{orthogonal} if any mapping of $G\times H$ to either of $G$ and $H$ is surjective. In such a case, we write $G \perp H$.	
	\end{definition}
	 In other words $G$, and $H$ are orthogonal if the subgraphs $G'$ and $H'$ of the proposition above when applied for the pair $G$ and $H$ are $G$ and $H$ themselves. Note that if $G \perp H$, then both $G$ and $H$ must be cores and, in particular, $G \perp G$ if and only if $G$ is a core.

	\begin{proposition}
		If $G \perp H$, then $|\core{G\times H}| \geq \max(|G|, |H|)$
	\end{proposition}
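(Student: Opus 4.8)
The plan is to reduce the orthogonality hypothesis, which is phrased for homomorphisms out of $G\times H$, to a statement about homomorphisms out of $\core{G\times H}$, and then read off the size bound from surjectivity. Throughout I would use the elementary property $G\times H\rra \core{G\times H}$ together with the projections $\pi_G$ and $\pi_H$.

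First I would record that there is at least one homomorphism from $\core{G\times H}$ to $G$: composing the inclusion $\core{G\times H}\hookrightarrow G\times H$ with the projection $\pi_G\colon G\times H\to G$ produces one. The main step is then to show that \emph{every} homomorphism $\phi\colon \core{G\times H}\to G$ is surjective. Since $G\times H\simeq \core{G\times H}$, the elementary properties give a surjective homomorphism $r\colon G\times H\rra \core{G\times H}$. The composite $\phi\circ r\colon G\times H\to G$ is then a homomorphism defined on $G\times H$, so by $G\perp H$ it is surjective onto $V(G)$. As the image of $\phi\circ r$ is contained in the image of $\phi$, it follows that $\phi$ itself is surjective.

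A surjection from $\core{G\times H}$ onto $V(G)$ forces $|\core{G\times H}|\geq |G|$. Running the symmetric argument, with the roles of $G$ and $H$ interchanged and $\pi_H$ in place of $\pi_G$ (reusing the same surjection $r$), yields $|\core{G\times H}|\geq |H|$. Combining the two inequalities gives $|\core{G\times H}|\geq\max(|G|,|H|)$, as claimed.

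I expect the only delicate point to be the transfer step: orthogonality is an assumption about maps whose domain is $G\times H$, and it has to be pushed onto the smaller graph $\core{G\times H}$. Precomposition with the surjection $r$ is precisely what accomplishes this, after which everything reduces to the routine observation that a surjective homomorphism cannot decrease the number of vertices.
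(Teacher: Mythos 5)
Your proof is correct. The paper actually states this proposition without giving a proof, treating it as immediate from the definition of orthogonality; your argument supplies exactly the intended reasoning: precompose an arbitrary homomorphism $\core{G\times H}\to G$ with the surjection $G\times H \rra \core{G\times H}$ so that the orthogonality hypothesis (which concerns maps out of $G\times H$) applies, conclude that the map out of the core is surjective, and read off $|\core{G\times H}|\geq |G|$, then argue symmetrically for $H$. This is also the same device the paper uses in its one-line proof of the adjacent Proposition~\ref{prop:project+hom}, where a map is composed with a projection precisely so that orthogonality can be invoked, so your approach is fully in the spirit of the paper.
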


	\begin{proposition}
		\label{prop:project+hom}
		If $G \perp H$, and $\phi: G\times H \ra G \times H$ is a homomorphism, then $\forall x \in V(H), \exists u \in V(G)$ such that $ (u,x)\in Im(\phi)$ 
	\end{proposition}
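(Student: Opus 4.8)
The plan is to reduce the statement to the surjectivity clause built into the definition of orthogonality, by composing $\phi$ with the projection $\pi_H$. Concretely, I would consider the map $\pi_H \circ \phi \colon G \times H \ra H$. This is a homomorphism, being the composition of the endomorphism $\phi \colon G\times H \ra G\times H$ with the projection homomorphism $\pi_H \colon G \times H \ra H$ from \Cref{fig:tensor_product}.

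The key step is then to invoke $G \perp H$. By \Cref{def:orthogonal}, \emph{every} mapping of $G\times H$ to $H$ is surjective, not merely the canonical projection $\pi_H$ itself. Applying this to the homomorphism $\pi_H \circ \phi$ shows that $\pi_H \circ \phi$ is surjective onto $V(H)$.

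Finally I would unpack what this surjectivity gives. Fix any $x \in V(H)$. Surjectivity of $\pi_H \circ \phi$ produces a vertex $(a,b) \in V(G\times H)$ with $\pi_H(\phi(a,b)) = x$. Writing $\phi(a,b) = (u,v)$, the definition of $\pi_H$ forces $v = x$, so $\phi(a,b) = (u,x)$ for some $u \in V(G)$, i.e. $(u,x) \in Im(\phi)$, which is exactly the claim.

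Since the argument is a direct application of the defining surjectivity property of orthogonality, I do not anticipate a genuine obstacle; the only point that demands care is the observation that orthogonality constrains \emph{all} homomorphisms $G\times H \ra H$, so it legitimately applies to the composite $\pi_H \circ \phi$ and not just to the projections $\pi_G$ and $\pi_H$ themselves.
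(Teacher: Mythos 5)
Your proposal is correct and follows exactly the paper's own argument: the paper's proof is precisely the one-line observation that, by the definition of orthogonality, the composite $\pi_H \circ \phi$ must be surjective. You have merely spelled out the unpacking of that surjectivity, which the paper leaves implicit.
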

	\begin{proof}
		By definition of orthogonality, the mapping $\pi_H \circ \phi$ must be surjective.
	\end{proof}
	A family  $\F$ of (di)graphs is said to be an \emph{orthogonal family} if $\prod\limits_{H\in \cal F}H \rra G$ for any $G\in \F$. One way of building an orthogonal family is by using orthogonality invariant as defined below.
	
	\begin{definition}
	\label{def:tau}
	A mapping $\tau$ of (di)graphs to a set $X$ is said to be an orthogonality invariant if the following two condition holds:
	\begin{enumerate}
		\item For any pair $G,H$ of (di)graphs $ \tau(G) = \tau(H) = c \Ra  \tau(G\times H) = c$
		\item For any three (di)graphs $F,G,H$, if $ F \ra G \ra H  \AND c = \tau(F) = \tau(H)\, $ then $ \tau(G) = c$.
	\end{enumerate}
	
	Given an orthogonality invariant $\tau$ and and element $c$ of $X$ we define $M_{\tau}(c)$ to be the minimal elements (with respect to subgraph order) of $\tau^{-1}(c)$.	
	\end{definition}

	The key point of the definition is the following.
	
	\begin{proposition}[Orthogonality invariant]
	Given an orthogonality invariant $\tau$ with $X$ as the range, and an element $c$ of $X$, the set $M_{\tau}(c)$ is an orthogonal family. 
	\end{proposition}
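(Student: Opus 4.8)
The plan is to verify the definition of an orthogonal family directly. Write $M=M_\tau(c)$ and $P=\prod_{H\in M}H$. I must show $P\rra G$ for every $G\in M$, that is, that $P$ admits a homomorphism to $G$ and that \emph{every} such homomorphism is surjective. The existence half is immediate: since each $G\in M$ is one of the factors of $P$, the projection $\pi_G\colon P\ra G$ is a homomorphism, so $P\ra G$ holds.

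Before attacking surjectivity I would record the key preliminary fact that $\tau(P)=c$. Every member of $M\subseteq\tau^{-1}(c)$ has $\tau$-value $c$, and the first condition in \Cref{def:tau} states exactly that $\tau$ is preserved under the product of two (di)graphs sharing a common $\tau$-value. Iterating this statement over the factors of $P$ then gives $\tau(P)=c$.

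For surjectivity, let $\phi\colon P\ra G$ be an arbitrary homomorphism and let $G'=\phi(P)$ be its image, a subgraph of $G$. Then $P\ra G'\hookrightarrow G$, and since $\tau(P)=\tau(G)=c$, the second condition in \Cref{def:tau} applied to the chain $P\ra G'\ra G$ forces $\tau(G')=c$. Thus $G'$ is a subgraph of $G$ lying in $\tau^{-1}(c)$. But $G\in M_\tau(c)$ is by definition a \emph{minimal} element of $\tau^{-1}(c)$ for the subgraph order, so $G'$ cannot be a proper subgraph; hence $G'=G$ and $\phi$ is surjective. Combining the two halves yields $P\rra G$ for every $G\in M$, which is precisely the statement that $M_\tau(c)$ is an orthogonal family.

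The one delicate point, and the place I expect the real care to be needed, is the claim $\tau(P)=c$: the first condition of \Cref{def:tau} is phrased for the product of two graphs, so passing to the whole product $P$ relies on an induction that is clean when $M$ is finite, which is the relevant case for the constructions of the paper. For an infinite family one would need an analogue of that condition for infinite products, and I would flag this explicitly. Everything else is a straightforward unwinding of the definitions, with the second condition of \Cref{def:tau} doing the essential work of pushing the $\tau$-value down to the image $\phi(P)$ so that minimality of $G$ can be invoked.
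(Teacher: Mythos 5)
Your proof is correct and follows essentially the same route as the paper: establish $\tau\bigl(\prod_{H\in M}H\bigr)=c$ via condition (1), then apply condition (2) to the chain $\prod H \ra \phi\bigl(\prod H\bigr) \hookrightarrow G$ and invoke minimality of $G$ in the subgraph order to force surjectivity. Your added remarks (the trivial existence half via projections, and the finite-versus-infinite iteration of condition (1)) are sound points of care that the paper leaves implicit, but they do not change the argument.
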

	
	\begin{proof}
	We need to show that any mapping, say $\phi$, of $\prod\limits_{H\in \cal F}H$ to $G$ is surjective for any $G$ in $\cal F$. Observe that $\tau\left (\prod_{H\in \cal F}H \right ) = c$ (by \Cref{def:tau}(1)). 
	As $\prod\limits_{H\in \cal F}H \ra \phi \left (\prod\limits_{H\in \cal F}H \right ) \stackrel{incl.}{\hookrightarrow} G$, we have $ \tau  \left ( \phi \left (\prod\limits_{H\in \cal F}H \right ) \right ) = c$ . And since $G$ is minimal, then $Im(\phi) = G$.
    \end{proof}

	Towards a better understanding we present a few examples. 
    
    \begin{example}
    	Let $X=\{T, F\}$ and $\tau$ be a mapping of digraphs to $X$ which assigns $T$ to the digraph $G$ if the directed path of length 5 maps to $G$, and otherwise assigns F to $G$. Since there is a directed $5$-path in the product of two 5-paths, the first condition of being an orthogonality invariant holds. The second condition simply follows by transitivity. Then $M_{\tau}(T)$ consists of: the directed path of length 5 and directed cycles of lengths 2,3, and 4. It can also be directly verified that any pair of these digraphs are orthogonal. On the other hand, $M_{\tau}(F)$ consists of a single digraph which is on one vertex and has no arc. 
    	
    \end{example}
    
    	If we replace the directed path of length $5$ with the directed path of length $k$ in this example, then $M_{\tau}(T)$ will consist of the directed path of length $k$, and all directed cycles of lengths at most $k-1$. 	
    	
    \begin{example}
    	By taking $X=\{T, F\}$ again, and considering the class of graphs, we define $\tau$ by 3-colorability: if $\chi(G)\leq 3$, then $\tau(G)=T$ and if $\chi(G)\geq 4$, then $\tau(G)=F$. That $\tau$ is an orthogonality invariant follows from the well know result of El-Zahar and Sauer \cite{ES85}, which is the only nontrivial and verified case of the Hedetniemi conjecture.
    	
    	More precisely, if $\tau$ assigns $F$ to two graphs, meaning neither is 3-colorable, then, by the result of \cite{ES85}, their product is also not 3-colorable. If both are assigned $T$, (or even if just one of them is assigned $T$), the product is also 3-colorable and thus is assigned $T$. The second condition is simply consequence of the homomorphism being transitive.  
    	
    	The class $M_{\tau}(F)$ then is the class of all 4-critical graphs, those are graphs of chromatic number 4 where each proper subgraph is 3-colorable. On the other hand $M_{\tau}(T)=\{K_1\}$. 
    \end{example}	
    
    If in this example we replace 3-colorability with 2-colorability, then the corresponding $\tau$ is easily verified to be an orthogonality invariant and $M_{\tau}(F)$ is the set of 3-critical graphs, i.e., the set of odd cycles. But if we replace 3-colorability with $k$-colorability for $k\geq 4$, then the corresponding $\tau$ is no longer an orthogonality invariant because Hedetniemi's conjecture is disproved for such values of $k$. In other words, by \cite{T23}, we can find graphs $G$ and $H$ for which $k$-colorability is ``False" but the product admits a $k$-coloring. 
    
    In general the decision function corresponding to the $H$-coloring problem is an orthogonality invariant if and only if $H$ is a multiplicative graphs.

	The set of orthogonality invariant functions is closed under product in the following sense:
	 
	\begin{proposition}
		Let $\tau_1$ be an orthogonality invariant mapping from (di)graphs to $X$ and $\tau_2$ be an orthogonality invariant mapping from (di)graphs to $Y$. Then $\tau=\tau_1 \times \tau_2$ defined by $\tau(P)=(\tau_1(P), \tau_2(P))$ is an orthogonality invariant mapping from (di)graphs to $X\times Y$.
	\end{proposition}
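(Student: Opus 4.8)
The plan is to verify directly the two defining conditions of \Cref{def:tau} for the product map $\tau = \tau_1 \times \tau_2$, exploiting the fact that equality in the product set $X \times Y$ decouples coordinate-wise. The single observation underlying everything is that for any (di)graphs $A,B$ we have $\tau(A) = \tau(B) = (c_1,c_2)$ if and only if $\tau_1(A) = \tau_1(B) = c_1$ and $\tau_2(A) = \tau_2(B) = c_2$; so each hypothesis involving $\tau$ splits into a pair of hypotheses, one for $\tau_1$ and one for $\tau_2$, and the two given invariants can be applied independently on each coordinate.

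Concretely, for condition (1) I would fix a pair $G,H$ with $\tau(G) = \tau(H) = (c_1,c_2)$. By the coordinate-wise reading this gives $\tau_1(G) = \tau_1(H) = c_1$ and $\tau_2(G) = \tau_2(H) = c_2$. Applying condition (1) of \Cref{def:tau} to $\tau_1$ yields $\tau_1(G\times H) = c_1$, and applying it to $\tau_2$ yields $\tau_2(G\times H) = c_2$. Recombining, $\tau(G\times H) = (\tau_1(G\times H), \tau_2(G\times H)) = (c_1,c_2)$, which is exactly what is required.

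For condition (2) I would take three (di)graphs with $F \ra G \ra H \AND \tau(F) = \tau(H) = (c_1,c_2)$. Again the equalities split into $\tau_i(F) = \tau_i(H) = c_i$ for $i = 1,2$, while the chain of homomorphisms $F \ra G \ra H$ is a single hypothesis shared by both coordinates. Feeding this chain together with $\tau_1(F) = \tau_1(H) = c_1$ into condition (2) for $\tau_1$ gives $\tau_1(G) = c_1$, and symmetrically $\tau_2(G) = c_2$; hence $\tau(G) = (c_1,c_2)$.

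There is no genuine obstacle here: the statement is a routine coordinate-wise verification, and the only thing to be careful about is the bookkeeping that equality of pairs in $X\times Y$ is precisely the conjunction of the two coordinate equalities, so that the hypotheses and conclusions of \Cref{def:tau} for $\tau$ translate faithfully into the corresponding statements for $\tau_1$ and $\tau_2$. Once that translation is made explicit, both conditions follow immediately from the assumption that $\tau_1$ and $\tau_2$ are themselves orthogonality invariants.
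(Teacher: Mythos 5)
Your proof is correct: the coordinate-wise verification of both conditions of \Cref{def:tau} is exactly the routine argument this proposition calls for, and the paper itself states the proposition without proof, evidently regarding this check as immediate. Nothing is missing, and there is no alternative route to compare against.
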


	In the following we will look at the family verifying $(Q \ra \vec P_h \AND Q \not \ra \vec P_{h-1})$.
    	
\subsection{Oriented paths}
\label{subsec:oriented_paths}
	An \emph{oriented} path is an oriented graph whose underlying graph is a path. It is \emph{directed} if all edges are oriented in the same direction.
	The \emph{length} of an oriented path $P$ is its number of arcs. 
	Given an oriented path with a \emph{beginning} $b_{P}$, the \emph{height} of a vertex $v_{i}$ is the difference between the number of forward and backward arcs between $b_{P}$ and $v_{i}$. The \emph{height} of $P$, denoted $h(P)$, is the height of its \emph{end} vertex $e_{P}$. 
	The following is easily observed.
		
	\begin{proposition}\label{morphism preserves height}
		Given oriented paths $P$, and $Q$ and a homomorphism $\phi: P \ra Q$, for any pair $u,v$ of vertices of $P$ we have $h(P_{uv}) = h(\phi(P_{uv}))$ where $P_{uv}$ denotes the subpath going from $u$ to $v$.
	\end{proposition}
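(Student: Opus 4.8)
The plan is to reduce the statement to a single arc and then sum the contributions. First I would fix a traversal of the subpath $P_{uv}$ from $u$ to $v$ as a vertex sequence $u = w_0, w_1, \dots, w_\ell = v$, in which each consecutive pair $w_{i-1}, w_i$ is joined by an arc of $P$. I record the orientation of the $i$-th step by a sign $\epsilon_i$, setting $\epsilon_i = +1$ when the arc is $w_{i-1} \tikzRarrow w_i$ (a forward arc, agreeing with the direction of traversal) and $\epsilon_i = -1$ when it is $w_i \tikzRarrow w_{i-1}$ (a backward arc). With this convention the definition of height unwinds immediately to $h(P_{uv}) = \sum_{i=1}^{\ell} \epsilon_i$, since the difference between the number of forward and backward arcs along $P_{uv}$ is exactly this signed sum.

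Next I would apply $\phi$ arc by arc. Because $\phi$ is a homomorphism of oriented graphs it preserves the orientation of every arc: if $w_{i-1} \tikzRarrow w_i$ in $P$, then $\phi(w_{i-1}) \tikzRarrow \phi(w_i)$ in $Q$, and symmetrically for the reversed orientation. Hence the $i$-th step of the image walk $\phi(w_0), \phi(w_1), \dots, \phi(w_\ell)$ carries exactly the same sign $\epsilon_i$ as the $i$-th step of $P_{uv}$. Summing over $i$ then gives $h(\phi(P_{uv})) = \sum_{i=1}^{\ell} \epsilon_i = h(P_{uv})$, which is the desired equality. Equivalently, one may phrase the same computation as a one-line induction on the length $\ell$, the inductive step being a single invocation of the orientation-preserving property of $\phi$.

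The only point requiring a little care, and the nearest thing to an obstacle, is that the image $\phi(P_{uv})$ is in general a walk rather than a simple oriented path, since $\phi$ need not be injective. One should therefore read $h$ applied to $\phi(P_{uv})$ as the net forward-minus-backward count of a walk, equivalently as the difference of the heights of $\phi(v)$ and $\phi(u)$ measured from the beginning $b_Q$ of $Q$; this quantity is well defined for any walk and agrees with the height of an oriented path whenever the walk happens to be one. Beyond this bookkeeping I expect no genuine difficulty: the whole proposition is the observation that height is an arc-additive quantity whose per-arc contribution depends only on orientation, a feature that homomorphisms preserve by definition.
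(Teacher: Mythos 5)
Your proof is correct, and it takes a genuinely different route from the paper's. The paper reduces to the case where $u$ and $v$ are the ends of $P$, asserts (without further justification) the key fact that $\phi(x)=\phi(y)$ forces $x$ and $y$ to lie at the same height, and then repeatedly identifies such pairs to obtain a quotient path $P''$ of the same height as $P$ that is isomorphic to the image of $P$ in $Q$. You instead argue arc by arc: the height of $P_{uv}$ is the signed sum $\sum_{i}\epsilon_i$ over a traversal, a homomorphism preserves each sign, and heights in $Q$ telescope along the image walk, giving $h(\phi(v))-h(\phi(u))=\sum_{i}\epsilon_i=h(P_{uv})$. Your computation is more elementary and self-contained; indeed, the paper's unproved fact is itself most easily established by exactly your telescoping argument applied to the closed walk that is the image of $P_{xy}$. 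Your final paragraph also pins down a point the paper glosses over: since $\phi$ need not be injective, $\phi(P_{uv})$ is in general a walk, or a subpath of $Q$ whose own ends need not be $\phi(u)$ and $\phi(v)$, so $h(\phi(P_{uv}))$ must be read as the height difference $h(\phi(v))-h(\phi(u))$ computed in $Q$; under the literal reading (the height of the image subgraph regarded as a path) the statement would even fail, e.g.\ when the path with arcs $u\ra a$ and $v\ra a$ is mapped onto a single arc of $Q$, where $h(P_{uv})=0$ but the image has height $1$. What the paper's identification process buys in exchange is structural information that is reused immediately afterwards: the quotient path is isomorphic to the image $\phi(P)$, which is what drives the surjectivity and endpoint-to-endpoint conclusions for homomorphisms between $k$-b-paths in the subsequent proposition and corollary.
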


	\begin{proof}
    We may assume that $u$ and $v$ are the end vertices of $P$. The claim follows from the following fact. If for vertices $x$ and $y$ of $P$ we have $\phi(x)=\phi(y)$, then $x$ and $y$ are at the same height. Reducing $P$ to a path $P'$ by identifying $x$ and $y$, then continuing this process until there are no two vertices identified by $\phi$ we get a path $P''$ which, first of all, is of the same height as $P$ and, secondly, is isomorphic to $Q$.
	\end{proof}

	\begin{definition}
	\label{def:k-b-path}
		A path of height $k$ is said to be a \emph{$k$-bounded path}, or $k$-b-path for short, if the \emph{beginning}, is the only vertex of height $0$ and the \emph{end} vertex is the only one of height $k$.
	\end{definition}
	It follows immediately that, the arc at the beginning is outgoing, the arc at the end is incoming, and that each of the other vertices are at a height between 1 and $k-1$. When the value of $k$ is of no importance we may refer to the path as a $b$-path. An example of a $4$-b-path is given in Figure~\ref{fig:path}.
	
	\begin{figure}[htb]
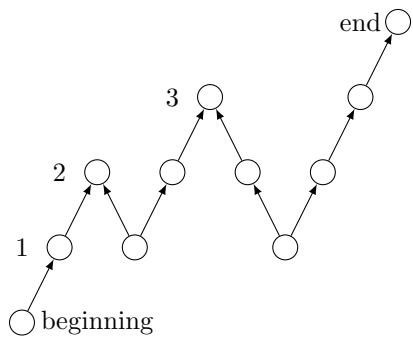

 		\centering
 		\scalebox{1}{\tikzfig{tikz_fig/h-b-path}}
 		\caption{A 4-b-path}
		\label{fig:path}
	\end{figure}
	
	The set of $k$-b-paths is quite relevant to the study of homomorphism because of the following duality: $$\forall G, G \not \ra \vec P_k \iff \exists \text{ a $k$-b-path } P,\text{ such that } P \ra G .$$
	
	\begin{proposition}\label{Prop:morphisms-betweenk-b-paths}
		Given $k$-b-paths $P$ and $Q$ if $P \ra Q$, then $|P| \geq |Q|$. Moreover, if the equality holds, then 
		$P$ and $Q$ are isomorphic.		
	\end{proposition}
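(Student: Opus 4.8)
The plan is to leverage the height-preservation property of \Cref{morphism preserves height} to pin down the images of the two distinguished vertices of $P$, and then to exploit the fact that a walk between the two ends of a path must traverse the whole path. So I fix $b_P$ and $b_Q$ as the beginnings and write $h_P,h_Q$ for the corresponding height functions.

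First I would apply \Cref{morphism preserves height} to the subpath from $b_P$ to an arbitrary vertex $v$, obtaining $h_Q(\phi(v)) - h_Q(\phi(b_P)) = h_P(v)$, so that $h_Q(\phi(v)) = c + h_P(v)$ where $c := h_Q(\phi(b_P))$ is a fixed constant. Evaluating at $v=b_P$ gives $c \geq 0$ (heights in $Q$ are nonnegative), while evaluating at $v=e_P$ gives $c + k \leq k$, hence $c \leq 0$; therefore $c=0$. Consequently $\phi(b_P)$ has height $0$ and $\phi(e_P)$ has height $k$ in $Q$. Since in a $k$-b-path the beginning is the \emph{only} vertex of height $0$ and the end the \emph{only} vertex of height $k$, this forces $\phi(b_P)=b_Q$ and $\phi(e_P)=e_Q$.

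Next I would show $\phi$ is surjective. Tracing $P$ from $b_P$ to $e_P$, its image is a walk in $Q$ running from $b_Q$ to $e_Q$, which are precisely the two endpoints of the underlying path $Q$. For any edge $f$ of $Q$, deleting $f$ disconnects $b_Q$ from $e_Q$, so the walk must cross $f$ at least once; hence every edge, and therefore every vertex, of $Q$ lies in the image. Surjectivity of $\phi$ immediately yields $|P| \geq |Q|$. For the equality case, if $|P|=|Q|$ then a surjection between finite vertex sets of equal size is a bijection; since $\phi$ is then injective on vertices it carries distinct edges of $P$ to distinct edges of $Q$, and as both paths have the same number of edges this is a bijection on edges, whose inverse also preserves arcs. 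Thus $\phi$ is an isomorphism.

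The step I expect to require the most care is the surjectivity argument. It depends crucially on first having correctly identified $b_Q$ and $e_Q$ as the images of $b_P$ and $e_P$ (which is exactly why the height bookkeeping of the second paragraph is needed), and then on the elementary but essential observation that in a tree any walk between two leaves must cover the unique path joining them. Everything after surjectivity—the order inequality and the equality case—is then routine finite counting.
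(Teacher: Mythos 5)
Your proof is correct and follows essentially the same route as the paper's: use the height-preservation proposition to force $\phi(b_P)=b_Q$ and $\phi(e_P)=e_Q$, deduce surjectivity from connectivity of the image (the paper says ``since $P$ is connected, $Im(\phi)=Q$''; your cut-edge walk argument is the same idea spelled out), and then conclude by counting. Your version is simply more detailed, in particular in justifying the endpoint identification via the constant $c=0$ and in checking that a bijective homomorphism between paths of equal size is genuinely an isomorphism, steps the paper leaves implicit.
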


	\begin{proof}
		Let $\phi: P \ra Q$ be a homomorphism. By preservation of height, the endpoints of $P$ are mapped to those of $Q$ and since $P$ is connected, $Im(\phi) = Q$ and, in particular, we have $|P| \geq |Im(\phi)| = |Q|$. If $|P| = |Im(\phi)|$, then $\phi$ is injective and, as it is also surjective, it is an isomorphism. 
	\end{proof}
	
	Following this proposition, when there is a homomorphism of $P$ to $Q$ we rather write $Q \la P$, so that the homomorphism order matches the conclusion of $|Q| \leq |P|$. 
	
	The proof of the proposition also implies the following.
	
	\begin{corollary}\label{coro:sujectivityOfk-b-paths}
		Given $k$-b-paths $P$ and $Q$, any homomprphsim of $P$ to $Q$ (if there is one), is surjective and maps beginning to beginning and end to end.
	\end{corollary}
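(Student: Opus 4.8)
The plan is to extract both conclusions directly from the machinery already assembled for \Cref{Prop:morphisms-betweenk-b-paths}, the essential tool being preservation of height (\Cref{morphism preserves height}). I will first pin down the images of the two extreme vertices using heights, and then read off surjectivity exactly as in the proof of the preceding proposition.

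Let $\phi: P \ra Q$ be a homomorphism, and write $h(\cdot)$ for the height of a vertex measured from the beginning of the path in which it lies. By \Cref{morphism preserves height} applied to the subpath from $b_P$ to an arbitrary vertex $v$, the signed forward-minus-backward arc count is preserved, which gives $h(\phi(v)) - h(\phi(b_P)) = h(v) - h(b_P) = h(v)$. Hence $h(\phi(v)) = h(v) + c$, where $c := h(\phi(b_P))$ is a constant. Since $Q$ is a $k$-b-path, all of its vertices have height in $\{0,1,\dots,k\}$, and since $\phi(P) \subseteq Q$ we need $h(v) + c \in \{0,\dots,k\}$ for every $v$. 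Evaluating at the beginning $b_P$ (height $0$) forces $c \geq 0$, while evaluating at the end $e_P$ (height $k$) forces $k + c \leq k$, i.e.\ $c \leq 0$. Therefore $c = 0$ and $h(\phi(v)) = h(v)$ for all $v$.

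With the offset forced to vanish, the two endpoint claims are immediate from the defining uniqueness property of $k$-b-paths (\Cref{def:k-b-path}): the vertex $\phi(b_P)$ has height $0$ in $Q$, and $b_Q$ is the only height-$0$ vertex of $Q$, so $\phi(b_P) = b_Q$; symmetrically $\phi(e_P)$ has height $k$ and $e_Q$ is the unique height-$k$ vertex of $Q$, so $\phi(e_P) = e_Q$. This is exactly the ``maps beginning to beginning and end to end'' part. Surjectivity then follows as in \Cref{Prop:morphisms-betweenk-b-paths}: since $P$ is connected its image is a connected subgraph of $Q$, and as this image now contains both endpoints $b_Q$ and $e_Q$ while $Q$ is a path (a tree in which the unique path from $b_Q$ to $e_Q$ is all of $Q$), the image must equal $Q$.

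I do not anticipate any real obstacle, since the statement is essentially a repackaging of facts already proved. The one point that merits a line of justification is that the additive constant $c$ is zero; this is precisely where the two-sided bound coming from the common height range $\{0,\dots,k\}$ of $P$ and $Q$ is used, and it is what upgrades the ``endpoints map to endpoints'' observation of the previous proof into the sharper statement that beginnings and ends are matched respectively.
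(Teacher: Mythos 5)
Your proof is correct and takes essentially the same route as the paper: it rests on height preservation (\Cref{morphism preserves height}) to force endpoints onto endpoints, and then on connectedness of $P$ together with the path structure of $Q$ to get surjectivity, which is exactly the argument inside the proof of \Cref{Prop:morphisms-betweenk-b-paths} from which the paper derives this corollary. Your explicit computation showing the height offset $c$ vanishes simply spells out the detail the paper compresses into ``by preservation of height, the endpoints of $P$ are mapped to those of $Q$,'' and in particular justifies the sharper claim that beginning goes to beginning rather than to the end.
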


   As a special case of the proposition: There is no homomorphism of a $k$-b-path to a proper subgraph of itself. In other words:
    
	\begin{corollary}\label{coro:K-b-pathsAreCores}
		Every $k$-b-paths is a core.
	\end{corollary}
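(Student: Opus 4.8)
The plan is to deduce this directly from \Cref{coro:sujectivityOfk-b-paths} together with the characterization of cores by retracts. First I would take an arbitrary endomorphism $\phi\colon P \to P$ of a $k$-b-path $P$. Since both the domain and the codomain are the same $k$-b-path, \Cref{coro:sujectivityOfk-b-paths} applies and tells us that $\phi$ is surjective. As $V(P)$ is finite, a surjective self-map is automatically injective, so $\phi$ is a bijection and therefore an automorphism of $P$.

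Second, I would invoke the characterization that a (di)graph is a core if and only if its only retract is the identity. A retract is an idempotent endomorphism, i.e.\ a map $\phi$ with $\phi^2 = \phi$; by the previous paragraph any such $\phi$ is a bijection, and composing $\phi^2 = \phi$ with $\phi^{-1}$ forces $\phi = \mathrm{id}_P$. Hence the only retract of $P$ is the identity, and $P$ is a core. One could equally phrase the whole argument through \Cref{Prop:morphisms-betweenk-b-paths}, reading $|P|\geq|Q|$ with $Q=P$ and using the ``moreover'' clause, but the surjectivity-plus-finiteness route is the most economical.

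I would also record the equivalent formulation hinted at just before the corollary, namely that $P$ admits no homomorphism onto a proper subgraph of itself. Here a small care is needed, because a proper subgraph of a $k$-b-path need not itself be a $k$-b-path, so one cannot apply \Cref{coro:sujectivityOfk-b-paths} to a map $P \to H$ with $H \subsetneq P$ directly. The clean way around this is the composition trick: if $\psi\colon P \to H$ were a homomorphism into a proper subgraph $H$, then composing with the inclusion $H \hookrightarrow P$ yields an endomorphism of $P$ whose image lies in $H \subsetneq P$, contradicting the surjectivity guaranteed by \Cref{coro:sujectivityOfk-b-paths}.

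I do not expect a genuine obstacle, as the content is entirely carried by \Cref{Prop:morphisms-betweenk-b-paths} and its \Cref{coro:sujectivityOfk-b-paths}; this statement is essentially a one-line corollary. The only point to handle with care is the remark just made, that a proper subgraph of a $k$-b-path need not be a $k$-b-path, which is precisely why one applies the surjectivity result to the endomorphism with full codomain $P$ rather than to a map into the subgraph.
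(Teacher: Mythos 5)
Your proof is correct and takes essentially the same route as the paper: the paper also presents this corollary as an immediate consequence of \Cref{Prop:morphisms-betweenk-b-paths} (via the surjectivity statement of \Cref{coro:sujectivityOfk-b-paths}), phrased as the observation that a $k$-b-path admits no homomorphism to a proper subgraph of itself. Your explicit treatment of the subtlety that a proper subgraph need not itself be a $k$-b-path---handled by composing with the inclusion to get a non-surjective endomorphism of $P$---is precisely the detail the paper leaves implicit, and your retract-based finish is just an unpacking of the same conclusion.
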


	In the following we work with disjoint union of $k$-b-paths, however we only consider disjoint unions where the elements are incomparable, i.e., there exists no homomorphism from one to another. More precisely, given a set $P_1, P_2, \dots, P_{\ell}$ of incomparable $k$-b-paths, their \emph{sum}, denoted $\Sigma_P$, is the digraph formed of their disjoint union. For a fixed $k$, the class of $\Sigma_P$ is denoted by ${\cal P}_k$.  We note that the assumption of paths pairwise being incomparable is not essential in our proofs, but it makes statements cleaner.

    Some of the key properties of sums of paths are listed in the followings. The first observation is about checking if there is a homomorphism of $\Sigma_P$ to $\Sigma_Q$.
     
      \begin{observation}\label{obs:HomOfSums}
    	Given two sums of a $k$-b-paths, $\Sigma_P$ and $\Sigma_Q$, there is a homomorphism of $\Sigma_Q$ to $\Sigma_P$ if and only if for each element $Q_j$ of $\Sigma_Q$ there is an element $P_i$ of $\Sigma_P$ such that $P_i \la Q_j$. When there is a homomorphism we write $\Sigma_P \la \Sigma_Q$.
    \end{observation}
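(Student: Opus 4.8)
The plan is to prove both implications by working component by component, exploiting that each $Q_j$ is connected while $\Sigma_P$ splits into the connected pieces $P_1, \dots, P_{\ell}$. Throughout, connectivity is understood with respect to the underlying (undirected) graph, and the only structural facts I need are that a $k$-b-path is a connected graph and that homomorphisms preserve adjacency.

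For the sufficiency direction, suppose that for each component $Q_j$ of $\Sigma_Q$ there is a component $P_{i(j)}$ of $\Sigma_P$ with $Q_j \ra P_{i(j)}$, witnessed by a homomorphism $\psi_j$. Since $\Sigma_Q$ is the disjoint union of the $Q_j$, there are no arcs joining distinct components, so the map $\phi$ that agrees with $\psi_j$ on each $Q_j$ is well defined and preserves every arc of $\Sigma_Q$. Composing with the inclusion of $P_{i(j)}$ into $\Sigma_P$, this yields a homomorphism $\phi\colon \Sigma_Q \ra \Sigma_P$, that is, $\Sigma_P \la \Sigma_Q$.

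For the necessity direction, let $\phi\colon \Sigma_Q \ra \Sigma_P$ be a homomorphism and fix a component $Q_j$. The key observation is that $Q_j$ is connected, so its homomorphic image $\phi(Q_j)$ is connected as well and therefore lies entirely within a single connected component $P_i$ of $\Sigma_P$. Consequently the restriction $\phi|_{Q_j}$ is a homomorphism of $Q_j$ into $P_i$, which is exactly $P_i \la Q_j$. As $j$ was arbitrary, every component of $\Sigma_Q$ maps to some component of $\Sigma_P$, completing the equivalence.

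The only point requiring care is the confinement step in the necessity direction: one must justify that a connected (di)graph cannot be split by a homomorphism across two distinct connected components of the target. This follows because adjacency in the underlying graph is preserved, so that the $\phi$-image of a path remains within one component; I would spell this out by a short induction along the vertices of $Q_j$, starting from $\phi(b_{Q_j})$ and using that consecutive vertices stay adjacent, hence in the same component. I expect this to be the main, albeit routine, obstacle. I also note that neither implication uses the pairwise incomparability of the paths, which is consistent with the earlier remark that this hypothesis merely streamlines the statements.
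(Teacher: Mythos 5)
Your proof is correct: the paper states this as an observation without proof, and your component-wise argument (gluing homomorphisms across the disjoint components for sufficiency, and using that the homomorphic image of a connected $Q_j$ must land in a single component $P_i$ for necessity) is exactly the routine justification the paper implicitly relies on. Your remark that pairwise incomparability is not needed also matches the paper's own comment that this hypothesis is only for cleanliness of statements.
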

      
     That we assumed in a sum of paths the elements forming the sum are pairwise incomparable implies that:
    
    \begin{observation}\label{obs:SumsAreCores}
    Any sum of $k$-b-paths is a core.	
    \end{observation}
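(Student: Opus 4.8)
The plan is to establish the statement through the retract characterization of cores recalled earlier: it suffices to show that the only retract of a sum $\Sigma_P$ is the identity, and in fact I would prove the slightly stronger fact that every endomorphism of $\Sigma_P$ is an automorphism. So I would start from an arbitrary homomorphism $\phi\colon \Sigma_P \to \Sigma_P$, where $\Sigma_P$ is the disjoint union of the pairwise incomparable $k$-b-paths $P_1,\dots,P_\ell$, and analyze how $\phi$ interacts with the connected components.

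The first step is to \emph{localize} $\phi$ on components. Since each $P_i$ is connected and the homomorphic image of a connected (di)graph is connected, $\phi(P_i)$ lies inside a single component, say $P_{\sigma(i)}$, which defines a map $\sigma\colon\{1,\dots,\ell\}\to\{1,\dots,\ell\}$ and a homomorphism $\phi|_{P_i}\colon P_i \to P_{\sigma(i)}$. The second step is to pin down $\sigma$ using incomparability: if $\sigma(i)\neq i$, then $\phi|_{P_i}$ would be a homomorphism between two distinct members of the sum, contradicting the standing assumption (built into the definition of a sum, and underlying \Cref{obs:HomOfSums}) that the $P_j$ are pairwise incomparable. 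Hence $\sigma(i)=i$ for every $i$, so $\phi$ carries each $P_i$ into itself.

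The final step invokes that each component is a core. By \Cref{coro:K-b-pathsAreCores}, $P_i$ is a core, so the endomorphism $\phi|_{P_i}\colon P_i\to P_i$ is an automorphism of $P_i$ (one may also read this off from \Cref{coro:sujectivityOfk-b-paths}: the restriction is surjective, hence bijective on the finite vertex set and edge/non-edge preserving within $P_i$). Because edges of $\Sigma_P$ occur only inside components, assembling these automorphisms shows $\phi$ is an automorphism of $\Sigma_P$; and if $\phi$ is idempotent, each invertible $\phi|_{P_i}$ must be the identity, so $\phi=\mathrm{id}$. I expect no genuinely hard step here: the only point needing care is the localization in the first step (justifying that a connected component maps into a single component, which is precisely where the disjoint-union structure is used), while the real leverage comes from the pairwise-incomparability hypothesis in the second step together with the fact that every $k$-b-path is already a core.
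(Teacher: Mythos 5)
Your proof is correct and takes essentially the same route the paper intends: the paper states this as an immediate consequence of the standing assumption that the paths in a sum are pairwise incomparable together with \Cref{coro:K-b-pathsAreCores}, giving no further argument. Your write-up (connected components map into single components, incomparability forces each $P_i$ into itself, and each $P_i$ being a core makes the restriction an automorphism, so any idempotent endomorphism is the identity) is exactly the fleshed-out version of that observation.
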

    
	\begin{definition} \label{def:sum_paths}
    	Given two sums of $k$-b-paths $\Sigma_{P}$, $\Sigma_{Q}$, the sum $ \Sigma_{P} \vee \Sigma_{Q}$ is the minimum element, $\Sigma_{R}$, of $\cal P_{k}$ such that $\Sigma_{R} \la \Sigma_{P}$ and $\Sigma_{R} \la \Sigma_{Q}$.
	\end{definition}
    Actually, $R$ is a subset of $P \cup Q$ which induces $\core{\Sigma P + \Sigma Q}$
     
     \begin{observation}\label{obs:MappingOfSums}
     	Given a digraph $X$ we have $X \la \Sigma_P \vee \Sigma_Q \iff X \la \Sigma P \AND X \la \Sigma Q $.
     \end{observation}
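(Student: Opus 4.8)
The plan is to reduce the statement to the universal (coproduct) property of the disjoint union, using the identification of $\Sigma_P \vee \Sigma_Q$ with the core of the disjoint union noted just above \Cref{def:sum_paths}. I would treat the two directions separately, taking care throughout with the arrow convention, under which $A \la B$ means that there is a homomorphism $B \ra A$.

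For the forward implication, recall that by \Cref{def:sum_paths} the sum $\Sigma_R = \Sigma_P \vee \Sigma_Q$ satisfies $\Sigma_P \vee \Sigma_Q \la \Sigma_P$ and $\Sigma_P \vee \Sigma_Q \la \Sigma_Q$; that is, both $\Sigma_P$ and $\Sigma_Q$ admit homomorphisms into $\Sigma_P \vee \Sigma_Q$. Hence if $X \la \Sigma_P \vee \Sigma_Q$, i.e. there is a homomorphism $\Sigma_P \vee \Sigma_Q \ra X$, then composing with these two homomorphisms yields $\Sigma_P \ra X$ and $\Sigma_Q \ra X$, which is exactly $X \la \Sigma_P \AND X \la \Sigma_Q$.

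For the backward implication, the key ingredient is the remark preceding the statement: $\Sigma_P \vee \Sigma_Q$ is the core of the disjoint union $\Sigma_P + \Sigma_Q$, and is in particular an induced subgraph of it each of whose components is a component of $\Sigma_P$ or of $\Sigma_Q$. Given homomorphisms $\Sigma_P \ra X$ and $\Sigma_Q \ra X$, I would restrict each of them to the relevant surviving components and glue the restrictions together, which is legitimate since the components are vertex-disjoint, to obtain a homomorphism $\Sigma_P \vee \Sigma_Q \ra X$, i.e. $X \la \Sigma_P \vee \Sigma_Q$. Equivalently, one may phrase this via hom-equivalence: since $\Sigma_P \vee \Sigma_Q \simeq \Sigma_P + \Sigma_Q$ and a homomorphism out of a disjoint union is precisely a pair of homomorphisms out of its parts, the claim is immediate.

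I do not expect a genuine obstacle here; the real content is bookkeeping of the arrow directions under the $\la$ convention. The single point that deserves care is that the minimality in \Cref{def:sum_paths} is taken within $\mathcal{P}_k$, whereas the target $X$ in the statement is an arbitrary digraph. This is exactly why the argument must route through the disjoint-union description $\Sigma_P \vee \Sigma_Q \simeq \Sigma_P + \Sigma_Q$ rather than invoking the minimality of $\Sigma_R$ directly: since $X$ need not belong to $\mathcal{P}_k$, the defining property of the sum does not by itself furnish the map $\Sigma_P \vee \Sigma_Q \ra X$, but the hom-equivalence with the disjoint union does.
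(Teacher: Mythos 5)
Your argument is correct and is essentially the paper's own (implicit) justification: the statement is left as an observation precisely because of the preceding remark that $\Sigma_P \vee \Sigma_Q$ is the subgraph of $\Sigma_P + \Sigma_Q$ inducing $\core{\Sigma_P + \Sigma_Q}$, so both directions reduce, exactly as you argue, to composition of homomorphisms and the coproduct property of disjoint unions. Your closing caveat is also well taken: since $X$ ranges over all digraphs rather than over $\mathcal{P}_k$, the minimality clause of \Cref{def:sum_paths} cannot be invoked directly, and the hom-equivalence $\Sigma_P \vee \Sigma_Q \simeq \Sigma_P + \Sigma_Q$ is the right substitute.
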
 
     
     \begin{definition}\label{def:mino}
     	Given a sum of $k$-b-paths $\Sigma_P$ we define its \emph{min-order}, denoted $\mino{\Sigma_P}$, to be the smallest order among its building blocks. In other words, $\displaystyle \mino{\Sigma_P}:= \min_{P_i \in P}|P_i|$.
     \end{definition}
 
  This definition allows an extension of the Proposition~\ref{Prop:morphisms-betweenk-b-paths}.
  
  \begin{proposition}
  	Given two sums of $k$-b-paths $\Sigma_P$ and $\Sigma_Q$, if $\Sigma_P \la \Sigma_Q$, then $\mino{\Sigma_P} \leq \mino{\Sigma_Q}$.
  \end{proposition}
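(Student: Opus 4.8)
The plan is to reduce the statement about sums directly to the single-path inequality of \Cref{Prop:morphisms-betweenk-b-paths}, by locating the right building blocks. First I would fix a building block $Q_{j_0}$ of $\Sigma_Q$ that realizes the minimum order, i.e.\ $|Q_{j_0}| = \mino{\Sigma_Q}$; such a block exists since the sum is a finite disjoint union of $k$-b-paths. The whole argument then hinges on tracking how the homomorphism witnessing $\Sigma_P \la \Sigma_Q$ behaves on this particular block.

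Next I would invoke \Cref{obs:HomOfSums}: since $\Sigma_P \la \Sigma_Q$, for \emph{every} block $Q_j$ of $\Sigma_Q$ there is a block $P_i$ of $\Sigma_P$ with $P_i \la Q_j$. Applying this to $Q_{j_0}$ produces a block $P_{i_0}$ of $\Sigma_P$ with $P_{i_0} \la Q_{j_0}$, which is precisely the existence of a homomorphism from the $k$-b-path $Q_{j_0}$ to the $k$-b-path $P_{i_0}$. At this point the problem is entirely about a single pair of $k$-b-paths, and \Cref{Prop:morphisms-betweenk-b-paths} applies: a homomorphism between $k$-b-paths can only decrease (weakly) the order, so $|P_{i_0}| \le |Q_{j_0}|$.

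Finally I would chain the inequalities. By definition of min-order we have $\mino{\Sigma_P} \le |P_{i_0}|$, and combining with the two displayed facts gives
\[
\mino{\Sigma_P} \;\le\; |P_{i_0}| \;\le\; |Q_{j_0}| \;=\; \mino{\Sigma_Q},
\]
which is the desired conclusion.

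I do not expect a genuine obstacle here: the result is a clean corollary once the sum-level homomorphism is decomposed block-wise via \Cref{obs:HomOfSums} and the order-monotonicity of \Cref{Prop:morphisms-betweenk-b-paths} is applied. The only point requiring care is bookkeeping with the reversed-arrow convention adopted after \Cref{Prop:morphisms-betweenk-b-paths}: here $P_{i_0}\la Q_{j_0}$ denotes a homomorphism \emph{from} $Q_{j_0}$ \emph{to} $P_{i_0}$, so that the size inequality indeed runs $|P_{i_0}|\le|Q_{j_0}|$ and not the other way around. Getting this direction right is what makes the minimum of $\Sigma_P$ bounded above by the minimum of $\Sigma_Q$ rather than below.
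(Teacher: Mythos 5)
Your proof is correct and is exactly the intended argument: the paper states this proposition without proof, as an immediate extension of \Cref{Prop:morphisms-betweenk-b-paths}, and your block-wise decomposition via \Cref{obs:HomOfSums} followed by the chain $\mino{\Sigma_P} \le |P_{i_0}| \le |Q_{j_0}| = \mino{\Sigma_Q}$ is precisely that extension. You also get the one delicate point right, namely that under the paper's reversed-arrow convention $P_{i_0} \la Q_{j_0}$ is a homomorphism \emph{from} $Q_{j_0}$ \emph{to} $P_{i_0}$, so the order inequality runs in the direction needed.
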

 	
  The key reason to work with sums of $k$-b-paths is the following definition of the product of sums of $k$-b-paths in $\cal{P}_k$. 
  
  \begin{definition}\label{def:path_prod}
  	Given sums of $k$-b-paths $\Sigma_P$ and $\Sigma_Q$, their product in $\cal{P}_k$, denoted $\Sigma_P \wedge \Sigma_Q$, is the sum of the $k$-b-paths $R$ where $R$ is taken over all minimal $k$-b-paths (with respect to homomorphism order) having the property that it maps to both $\Sigma_P$ and $\Sigma_Q$.
  \end{definition}

Thus $\Sigma_P \wedge \Sigma_Q$ is distinguished from $\Sigma_P \times \Sigma_Q$ which is the tensor product of $\Sigma_P$ and $\Sigma_Q$ built on  $V(\Sigma_P) \times V(\Sigma_Q)$. That $\Sigma_P \wedge \Sigma_Q$ is well defined and finite follows from the next proposition~\ref{prop:tensorPathProduct}.

	\begin{proposition}\label{prop:tensorPathProduct}
    Given two $k$-b-paths $P$ and $Q$ where $b_P$, $b_Q$ are (respectively) the beginning and $e_P$, $e_Q$ are (respectively) the ends, the product  
	$P \wedge Q $ is the sum of a set of paths in $P \times Q$ with $(b_P, b_Q)$ as the beginning and $(e_P, e_Q)$ as the end. 
    \end{proposition}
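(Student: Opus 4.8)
The plan is to reduce the statement to an analysis of a height function on the tensor product $P \times Q$. First, by the fundamental property of the product, a $k$-b-path $R$ satisfies $R \to P$ and $R \to Q$ if and only if $R \to P \times Q$; hence $P \wedge Q$ is exactly the sum of the minimal $k$-b-paths admitting a homomorphism to $P \times Q$. So it suffices to show that these minimal $k$-b-paths are, up to isomorphism, the paths of $P \times Q$ running from $(b_P, b_Q)$ to $(e_P, e_Q)$.

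Next I would equip $P \times Q$ with a height function. Writing $h_P$ and $h_Q$ for the height functions of $P$ and $Q$ (normalized so that $h_P(b_P) = h_Q(b_Q) = 0$), every arc of $P \times Q$ raises both coordinate heights by one, so $H(u,v) := h_P(u)$ increases by one along each arc while the quantity $h_P(u) - h_Q(v)$ is constant on each connected component. On the component $C$ containing $(b_P, b_Q)$ this constant is $0$, that is, $h_P(u) = h_Q(v)$ throughout $C$. Because $P$ and $Q$ are $k$-b-paths their heights lie in $\{0, \dots, k\}$ and attain $0$ and $k$ only at their beginnings and ends; combined with $h_P(u) = h_Q(v)$ this forces, inside $C$, all heights into $\{0, \dots, k\}$ with $0$ attained only at $(b_P, b_Q)$ and $k$ only at $(e_P, e_Q)$. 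Consequently every path of $C$ from $(b_P, b_Q)$ to $(e_P, e_Q)$ is automatically a $k$-b-path with these two vertices as beginning and end. This is the technical heart of the argument, and I expect it to be the main obstacle, since everything downstream hinges on the extreme heights being attained uniquely in $C$.

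For the forward inclusion, take any $k$-b-path $R$ with a homomorphism $f : R \to P \times Q$. Applying \Cref{morphism preserves height} to the two projections $\pi_P \circ f$ and $\pi_Q \circ f$ shows that the $P$- and $Q$-coordinate heights of $f(v)$ equal $h_R(v)$ up to a fixed shift; a squeeze argument using $h_R(v) \in \{0, \dots, k\}$ together with $h_P, h_Q \in \{0, \dots, k\}$ pins the shift to $0$ and yields $f(b_R) = (b_P, b_Q)$ and $f(e_R) = (e_P, e_Q)$. To see that a \emph{minimal} such $R$ embeds, suppose $f(x) = f(y)$ for distinct $x, y$; then $h_R(x) = h_R(y)$, and identifying $x$ with $y$ (as in the proof of \Cref{morphism preserves height}) produces a strictly smaller $k$-b-path that still maps to $P \times Q$, contradicting minimality. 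Hence $f$ is injective and $R$ is isomorphic to the subpath $f(R) \subseteq C$ joining the two corners.

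Finally, the converse inclusion is immediate: any path of $C$ from $(b_P, b_Q)$ to $(e_P, e_Q)$ is a $k$-b-path by the second paragraph, and it maps to $P \times Q$ through inclusion, hence to both $P$ and $Q$. Therefore the minimal $k$-b-paths mapping to $P \times Q$, which are the building blocks of $P \wedge Q$, are precisely realised as subpaths of $P \times Q$ from $(b_P, b_Q)$ to $(e_P, e_Q)$; being minimal they are pairwise incomparable, so their disjoint union is a legitimate sum, and $P \wedge Q$ is the sum of this set of paths. Finiteness, claimed in the paragraph preceding the statement, then follows at once since $P \times Q$ has only finitely many subpaths.
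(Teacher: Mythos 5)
Your proposal follows the paper's skeleton faithfully: your height-function analysis of the component of $(b_P,b_Q)$ is a correct (and slightly more explicit) version of the paper's observation that the projections of corner-to-corner paths are height-preserving, and your squeeze argument re-derives what the paper gets from \Cref{coro:sujectivityOfk-b-paths}, namely that every $k$-b-path mapping to both $P$ and $Q$ maps into $P\times Q$ with its beginning on $(b_P,b_Q)$ and its end on $(e_P,e_Q)$. The gap is in the bridging step, where you claim that if $f(x)=f(y)$ for distinct $x,y$ then ``identifying $x$ with $y$ \dots produces a strictly smaller $k$-b-path that still maps to $P\times Q$.'' Identifying two vertices of the same height in an oriented path does not, in general, produce a path: if $R$ contains the segment $v_0\to v_1\to v_2\leftarrow v_3\to v_4$ and one identifies $v_1$ with $v_3$, the resulting vertex has one in-neighbour and two out-neighbours, so the underlying graph contains $K_{1,3}$. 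The device you borrow from the proof of \Cref{morphism preserves height} is sound there only because the codomain is itself a path, so the full quotient, being the image, is a connected subgraph of a path and hence a path. Here the codomain is $P\times Q$, which branches (if $u$ is a valley of $P$ and $x$ a valley of $Q$, then $(u,x)$ has four out-neighbours), and homomorphic images of $k$-b-paths in $P\times Q$ genuinely can contain vertices of degree three. Consequently non-injectivity of $f$ does not yield a smaller $k$-b-path, the contradiction with minimality never materialises, and the central claim ``minimal $\Rightarrow$ $f$ injective, hence $R\cong f(R)$'' is left unproven.

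To be fair, this is precisely the point at which the paper's own proof is thinnest: it asserts without justification that ``the image of $R_i$ by $(\phi,\psi)$ is a path in $P\times Q$,'' which faces exactly the same branching objection. What both arguments actually require is the statement that every $k$-b-path mapping to both $P$ and $Q$ admits a homomorphism \emph{to} some corner-to-corner path of $P\times Q$; minimality (in the sense of \Cref{def:path_prod}) then forces each minimal such path to be isomorphic to a corner-to-corner path, which is the proposition. Neither your quotient step nor the paper's one-line assertion establishes this. A natural repair, excising the fold between a closest identified pair at distance two in $R$, handles simple folds, but an identified pair may also arise from $R$ winding around one of the $4$-cycles of $P\times Q$, and then no such local surgery stays within the class of paths. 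So your proposal reproduces the paper's approach, but the specific patch you add to close its one real hole does not work as written; it would have to be replaced by a genuine untangling argument.
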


    \begin{proof}
	Let $R^*$ be the set of all paths in $P\times Q$ with $(b_P, b_Q)$ as the beginning and $(e_P, e_Q)$ as the end. Observe that the projection of each element of $R^*$ to $P$ (or $Q$) is height preserving, furthermore $(b_P, b_Q)$ is the only vertex mapped to $b_P$ or $b_Q$ in these projections. Similarly $(e_P, e_Q)$ is the only vertex mapped to $e_P$ or $e_Q$. Thus all paths in $R^*$ are $k$-b-paths. Let $R$ be a maximal subset of  $R^*$ whose elements are pairwise incomparable. Hence $\Sigma_R \in \cal P_k$.
	\vspace{3pt}
	
	What remains to show is that if a $k$-b-path $R_i$ maps to both $P$ and $Q$, then it maps to $P\times Q$ in such a way that the beginning is mapped to $(b_P,b_Q)$ and the end is mapped to $(e_P,e_Q)$. Let $\phi$ and $\psi$ be mappings of $R_i$ to $P$ and $Q$, respectively. By \Cref{coro:sujectivityOfk-b-paths}, we know that the beginning of $R_i$ is mapped to $b_P$ by $\phi$ and to $b_Q$ by $\psi$. Similarly, its ends are mapped to $e_P$ by $\phi$ and to $e_Q$ by $\psi$. By Diagram~\ref{fig:DiagramP'Q'} $(\phi, \psi)$ is a mapping of $R_i$ to $P \times Q$ which maps its beginning to $(b_P,b_Q)$ and its end to $(e_P, e_Q)$. Thus the image of $R_i$ by $(\phi, \psi)$ is a path in $P\times Q$ connecting $(b_P,b_Q)$ to $(e_P, e_Q)$.
	\end{proof}
	
	One can now check that for $P = \set{P_1,\dots , P_l}, Q = \set{Q_1, \dots, Q_{l'}}$, we have:
	$$\Sigma_P \wedge \Sigma_Q = \bigvee_{\substack{1\leq i\leq l \\ 1 \leq j \leq l'}} \left ( P_i \wedge Q_j \right ).$$
	It follows that $\wedge$ distributes over $\vee$.
	
An example of the product of two paths is given in \Cref{fig:path_product}. However, one should note that product of two paths could be sum of a number of paths.

	\begin{figure}[htb]
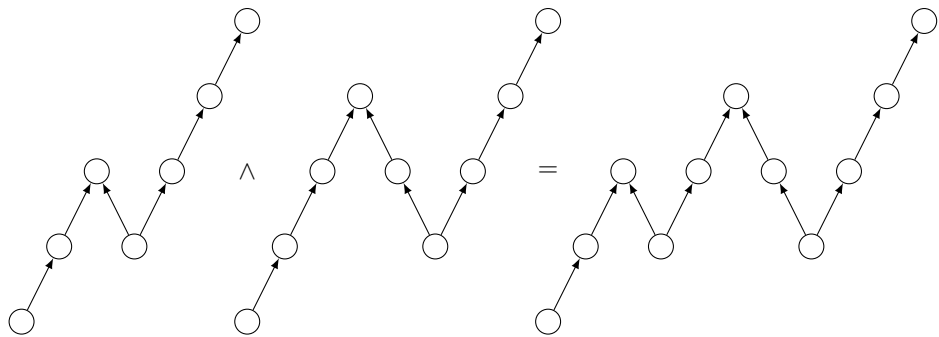

	\centering
	\scalebox{1}{\tikzfig{tikz_fig/path_product}}
	\caption{ Exemple of path product in $\cal{P}_k$}
	\label{fig:path_product}
\end{figure}

The definition implies immediately that the product of two sums of $k$-b-paths is a meet of the two in the homomorphism order restricted on the class $\cal{P}_k$. In other words:

\begin{observation}
	Given sums of $k$-b-paths $\Sigma_P$ and $\Sigma_Q$ and $\Sigma_X\in \cal P_k$ we have $\Sigma_P \wedge \Sigma_Q \la \Sigma_X$ if and only if $\Sigma_P \la \Sigma_X$ and $\Sigma_Q \la \Sigma_X$.
\end{observation}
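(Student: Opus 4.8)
The statement is exactly the universal (product) property of $\wedge$ inside $\cal P_k$: unwinding the arrow convention, it asserts that there is a homomorphism $\Sigma_X \to \Sigma_P\wedge\Sigma_Q$ if and only if there are homomorphisms $\Sigma_X \to \Sigma_P$ and $\Sigma_X\to\Sigma_Q$. The plan is to prove the two implications separately, and in both to reduce to the case where $\Sigma_X$ is a single $k$-b-path, since by \Cref{obs:HomOfSums} a sum maps into a sum precisely when each of its components does.

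For the forward implication I would use only \Cref{def:path_prod}: by definition every building block $R$ of $\Sigma_P\wedge\Sigma_Q$ maps to both $\Sigma_P$ and $\Sigma_Q$, so assembling these component maps (again via \Cref{obs:HomOfSums}) produces homomorphisms $\Sigma_P\wedge\Sigma_Q\to\Sigma_P$ and $\Sigma_P\wedge\Sigma_Q\to\Sigma_Q$. Composing a given homomorphism $\Sigma_X\to\Sigma_P\wedge\Sigma_Q$ with these two maps immediately yields $\Sigma_X\to\Sigma_P$ and $\Sigma_X\to\Sigma_Q$.

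For the backward implication, fix a component $X_m$ of $\Sigma_X$. By \Cref{obs:HomOfSums} the hypotheses give $X_m\to P_i$ and $X_m\to Q_j$ for some $i,j$, so $X_m$ is a $k$-b-path mapping to both $P_i$ and $Q_j$. Here I would invoke the construction in the proof of \Cref{prop:tensorPathProduct}: the two maps combine into a single homomorphism $X_m\to P_i\times Q_j$ sending the beginning to $(b_{P_i},b_{Q_j})$ and the end to $(e_{P_i},e_{Q_j})$, whose image is a $k$-b-path $W$ lying in $P_i\times Q_j$. Since $W$ belongs to the finite set $R^*$ of such paths, it admits a homomorphism onto one of the chosen pairwise-incomparable (minimal) representatives $R_0$, which is by definition a building block of $P_i\wedge Q_j$. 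Chaining $X_m\to W\to R_0\to P_i\wedge Q_j$ and then $P_i\wedge Q_j\to \Sigma_P\wedge\Sigma_Q$ — the latter using the identity $\Sigma_P\wedge\Sigma_Q=\bigvee_{i,j}(P_i\wedge Q_j)$ together with the fact that each term of a $\vee$-join maps into it (\Cref{def:sum_paths}) — gives $X_m\to\Sigma_P\wedge\Sigma_Q$. Applying \Cref{obs:HomOfSums} once more over all components of $\Sigma_X$ then yields $\Sigma_X\to\Sigma_P\wedge\Sigma_Q$.

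The delicate point, and the one I expect to be the main obstacle, is the middle step of the backward direction: passing from the fact that $X_m$ maps to both factors to the fact that it maps to one of the designated minimal paths. The difficulty is that the property ``maps to both $P_i$ and $Q_j$'' is not downward closed in the homomorphism order, so one cannot simply descend to a minimal element of the set of all such paths. This is precisely why the tensor-product realization of \Cref{prop:tensorPathProduct} is needed: it places the image $W$ inside $R^*$, all of whose elements automatically map to both factors via the two projections, so that one may descend within $R^*$ to a minimal, pairwise-incomparable representative without ever leaving the set of paths mapping to both, landing on a genuine building block of $P_i\wedge Q_j$.
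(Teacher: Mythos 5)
Your proof is correct, but it takes a noticeably heavier route than the paper, which offers no proof at all: the observation is introduced with ``the definition implies immediately that the product of two sums of $k$-b-paths is a meet,'' and indeed both directions follow from \Cref{def:path_prod} with almost no work. Your forward direction coincides with that reading. For the backward direction, the argument the paper is implicitly relying on is simpler than yours: since ``minimal'' in \Cref{def:path_prod} means minimal \emph{within} the set $S$ of $k$-b-paths mapping to both $\Sigma_P$ and $\Sigma_Q$, one only needs that every element of $S$ lies above some minimal element of $S$, and this is guaranteed by \Cref{Prop:morphisms-betweenk-b-paths}: along a strict descent inside $S$ the number of vertices strictly decreases (a homomorphism between $k$-b-paths of equal order is an isomorphism), so the descent terminates. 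In particular, the ``delicate point'' you isolate is a red herring --- failure of downward closure of $S$ is irrelevant, because one descends inside $S$ itself and never needs paths outside it to have the property. Your detour through \Cref{prop:tensorPathProduct} is nevertheless valid (every path in $R^*$ maps to both factors via the projections), but it carries a small imprecision of its own: terminating your descent inside $R^*$ yields a path minimal \emph{in $R^*$}, and to conclude it is a genuine building block (minimal in all of $S$, as \Cref{def:path_prod} demands) you need one more line, namely that any element of $S$ strictly below it would re-embed into $R^*$ by the second half of \Cref{prop:tensorPathProduct}, contradicting minimality in $R^*$; choosing an arbitrary maximal antichain in $R^*$, as the wording ``chosen pairwise-incomparable representatives'' suggests, would not suffice, since comparability could point the wrong way. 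What your route does buy is exactly what the paper uses \Cref{prop:tensorPathProduct} for elsewhere: finiteness and well-definedness of $\wedge$, which the meet property by itself does not require.
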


Considering the notion of min-order on sums of $k$-b-paths, we have the following which is implied by \Cref{prop:tensorPathProduct}.

\begin{corollary}
	$\mino{\Sg P \wedge \Sg Q} \leq |\Sg P \times \Sg Q| $
\end{corollary}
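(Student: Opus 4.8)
The plan is to show that every building block of $\Sg P \wedge \Sg Q$ can be realized as a path sitting inside the tensor product $\Sg P \times \Sg Q$; since a subgraph of $\Sg P \times \Sg Q$ has at most $|\Sg P \times \Sg Q|$ vertices and $\mino{\cdot}$ is by definition the minimum order over all building blocks, bounding each block individually immediately bounds the minimum. So it suffices to prove the per-block estimate.

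To carry this out I would start from the distributivity identity recorded just before the statement,
$$\Sg P \wedge \Sg Q = \bigvee_{\substack{1\leq i\leq l \\ 1 \leq j \leq l'}} \left( P_i \wedge Q_j \right).$$
By the definition of $\vee$ (its building blocks form a subcollection of the union of the building blocks of the summands, being the subset of paths inducing the core of the disjoint union), every building block of $\Sg P \wedge \Sg Q$ is a building block of some single factor $P_i \wedge Q_j$. Now I invoke \Cref{prop:tensorPathProduct}: each building block of $P_i \wedge Q_j$ is one of the paths of $P_i \times Q_j$ running from $(b_{P_i},b_{Q_j})$ to $(e_{P_i},e_{Q_j})$. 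Since $P_i \times Q_j$ is exactly the $(i,j)$ component of $\Sg P \times \Sg Q$, such a path is a subgraph of $\Sg P \times \Sg Q$, whence its order is at most $|P_i \times Q_j| = |P_i|\,|Q_j| \leq |\Sg P|\,|\Sg Q| = |\Sg P \times \Sg Q|$.

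Combining, every building block of $\Sg P \wedge \Sg Q$ has order at most $|\Sg P \times \Sg Q|$, so in particular the smallest one does, giving $\mino{\Sg P \wedge \Sg Q} \leq |\Sg P \times \Sg Q|$. There is no real obstacle here; the only point to be careful about is the bookkeeping for $\vee$, namely confirming that passing to the pairwise-incomparable core of a disjoint union only discards building blocks and never creates new ones, so that every block of the meet is genuinely a block of some $P_i \wedge Q_j$.

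Alternatively, I could argue directly without distributivity: take a building block $R$ of $\Sg P \wedge \Sg Q$ achieving the min-order, with maps $\phi: R \ra \Sg P$ and $\psi: R \ra \Sg Q$. Connectivity of $R$ forces $\phi,\psi$ into single components $P_i,Q_j$, and by \Cref{coro:sujectivityOfk-b-paths} they send beginning to beginning and end to end; then $(\phi,\psi)$ maps $R$ onto a $k$-b-path $R'$ in $P_i\times Q_j$ as in the proof of \Cref{prop:tensorPathProduct}. Since $R'$ maps to both $\Sg P$ and $\Sg Q$ while $R\ra R'$, minimality of $R$ together with \Cref{Prop:morphisms-betweenk-b-paths} forces $R\cong R'$, so $|R|=|R'|\leq |\Sg P\times\Sg Q|$. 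I expect the distributivity route to read more cleanly, so I would present that as the main argument.
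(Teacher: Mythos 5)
Your proposal is correct and follows essentially the same route as the paper, which gives no written proof but states that the corollary is ``implied by \Cref{prop:tensorPathProduct}'': every building block of $\Sg_P \wedge \Sg_Q$ is, by that proposition (plus the distributivity identity for sums), a path inside some $P_i \times Q_j$, hence of order at most $|P_i|\,|Q_j| \leq |\Sg_P \times \Sg_Q|$. Your explicit bookkeeping for $\vee$ (that taking the core of a disjoint union only discards blocks) and your alternative direct argument both just fill in details the paper leaves to the reader.
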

   The following follows immediately from the definition.
   
   \begin{observation}
   	If $\Sigma_P \la \Sigma_Q$, then $\Sigma_P\vee \Sigma_Q=\Sigma_P$ and $\Sigma_P \wedge \Sigma_Q=\Sigma_Q$. 
   \end{observation}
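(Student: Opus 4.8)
The plan is to treat this as the standard lattice identity $a\vee b=a$ and $a\wedge b=b$ that holds whenever $b$ lies below $a$, transported into the homomorphism order on $\cal P_k$. First I would unwind the hypothesis: by the convention fixed after \Cref{Prop:morphisms-betweenk-b-paths}, the statement $\Sigma_P \la \Sigma_Q$ means that there is a homomorphism $\Sigma_Q \ra \Sigma_P$, i.e.\ $\Sigma_Q$ sits below $\Sigma_P$ in the homomorphism order. Since every sum of $k$-b-paths is a core (\Cref{obs:SumsAreCores}), two hom-equivalent such sums are isomorphic; hence to prove an equality $\Sigma_A=\Sigma_B$ it suffices to show $\Sigma_A\simeq\Sigma_B$, and for that it is enough to show that $\Sigma_A$ and $\Sigma_B$ satisfy the same characterizing (universal) property.

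For the join, I would show that $\Sigma_P$ itself satisfies the property defining $\Sigma_P\vee\Sigma_Q$ in \Cref{obs:MappingOfSums}, namely that for every digraph $X$ one has $X\la \Sigma_P$ if and only if $X\la\Sigma_P$ and $X\la\Sigma_Q$. The direction $(\Leftarrow)$ is immediate. For $(\Rightarrow)$, assume $X\la\Sigma_P$; composing the corresponding homomorphism with the one witnessing $\Sigma_P\la\Sigma_Q$ and using transitivity of the homomorphism order gives $X\la\Sigma_Q$. Thus $X\la\Sigma_P \iff X\la\Sigma_P\vee\Sigma_Q$ for all $X$, so $\Sigma_P\simeq\Sigma_P\vee\Sigma_Q$ and, both being cores, $\Sigma_P\vee\Sigma_Q=\Sigma_P$.

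The meet is entirely symmetric, using the characterization of $\wedge$ as a meet (the observation stating $\Sigma_P\wedge\Sigma_Q\la\Sigma_X$ if and only if $\Sigma_P\la\Sigma_X$ and $\Sigma_Q\la\Sigma_X$). I would check that $\Sigma_Q$ satisfies this property: for every $\Sigma_X\in\cal P_k$, one has $\Sigma_Q\la\Sigma_X$ if and only if $\Sigma_P\la\Sigma_X$ and $\Sigma_Q\la\Sigma_X$; the nontrivial direction again follows by transitivity from $\Sigma_P\la\Sigma_Q\la\Sigma_X$. Hence $\Sigma_Q\simeq\Sigma_P\wedge\Sigma_Q$, and equality of cores gives $\Sigma_P\wedge\Sigma_Q=\Sigma_Q$.

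There is essentially no hard step: all the real content is already packaged in the two characterizing observations, and the argument reduces to the two-line lattice computation above. The only point requiring care — and the place where an error would creep in — is bookkeeping the direction of the arrow $\la$, so that the hypothesis is applied as $\Sigma_Q\ra\Sigma_P$ and transitivity is used on the correct side in each of the two parts; getting this backwards would swap the roles of $\vee$ and $\wedge$.
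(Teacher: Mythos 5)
Your proof is correct: the arrow bookkeeping is right ($\Sigma_P \la \Sigma_Q$ read as a homomorphism $\Sigma_Q \ra \Sigma_P$, transitivity applied on the correct side in each part), and the conclusion via hom-equivalence plus the fact that sums of $k$-b-paths are cores is sound. The paper offers no written proof --- it states the observation as following immediately from the definitions --- and your verification through the two characterizing observations is exactly the intended ``immediate'' argument, so there is nothing to compare beyond that.
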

   
     For the sake of completing an algebra we allow an empty set to be a $k$-b-path and denote it by $0$ with the convention that $0$ maps to every $k$-b-path.  Moreover, we denote by $1$ the directed path of length $h$ noting that all elements of $\cal{P}_k$, and thus all sums of paths in $\cal{P}_k$ map to $1$. With these notation we have the following whose details of proof we leave to the reader and for related terms we refer to \cite{Birkhoff67}.

      \begin{proposition}
      	The system $({\cal P}_h,\vee,\wedge,0,1)$ is a distributive lattice.
      \end{proposition}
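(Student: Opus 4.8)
The plan is to check the lattice axioms against the structural facts already assembled in this section, and then to obtain distributivity by transporting the identity into the Boolean lattice of sets of $k$-b-paths through a ``down-set'' map. First I would fix the order on $\cal{P}_h$: declare $\Sigma_A \preceq \Sigma_B$ exactly when $\Sigma_B \la \Sigma_A$, i.e. when there is a homomorphism $\Sigma_A \ra \Sigma_B$. Reflexivity and transitivity are clear, and antisymmetry is the only point needing a word: if $\Sigma_A \ra \Sigma_B$ and $\Sigma_B \ra \Sigma_A$ then $\Sigma_A \simeq \Sigma_B$, and since both are cores by \Cref{obs:SumsAreCores} and cores are unique up to isomorphism, $\Sigma_A \cong \Sigma_B$. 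Thus $\preceq$ is a genuine partial order on $\cal{P}_h$ taken up to isomorphism. In this order $0$ (the empty path) is least, as it maps to everything, and $1$ (the directed $h$-path) is greatest, as everything maps to it, so the structure is bounded.

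Next I would identify $\vee$ and $\wedge$ as the lattice operations. By \Cref{obs:MappingOfSums}, $\Sigma_P \vee \Sigma_Q \ra X \iff \Sigma_P \ra X \AND \Sigma_Q \ra X$ for every $X$; letting $X$ range over $\cal{P}_h$ this says precisely that $\Sigma_P \vee \Sigma_Q$ is the least upper bound of $\Sigma_P,\Sigma_Q$, hence the join. Dually, the Observation following \Cref{fig:path_product} states $\Sigma_X \ra \Sigma_P \wedge \Sigma_Q \iff \Sigma_X \ra \Sigma_P \AND \Sigma_X \ra \Sigma_Q$, which says $\Sigma_P \wedge \Sigma_Q$ is the greatest lower bound, hence the meet. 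Both exist and are finite: the join is realized by $\core{\Sigma_P + \Sigma_Q}$, and finiteness of the meet is \Cref{prop:tensorPathProduct}. Since a poset in which every two elements have a supremum and an infimum is automatically a lattice (commutativity, associativity and absorption follow formally from the universal properties of $\sup$ and $\inf$), the system $(\cal{P}_h,\vee,\wedge,0,1)$ is a bounded lattice.

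It then remains to prove distributivity, and the plan is to embed $\cal{P}_h$ into a power-set lattice. For $\Sigma_P \in \cal{P}_h$ let $D(\Sigma_P)$ be the set of all $k$-b-paths $R$ with $R \ra \Sigma_P$. Two facts drive the argument. First, $\Sigma_A \preceq \Sigma_B \iff D(\Sigma_A) \subseteq D(\Sigma_B)$: the forward implication is composition, and the converse holds because every building block $A_i$ of $\Sigma_A$ lies in $D(\Sigma_A) \subseteq D(\Sigma_B)$, so $A_i \ra \Sigma_B$ for all $i$ and thus $\Sigma_A \ra \Sigma_B$ by \Cref{obs:HomOfSums}; consequently $D$ is injective on $\cal{P}_h$. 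Second, $D$ carries the lattice operations to set operations: $D(\Sigma_P \wedge \Sigma_Q) = D(\Sigma_P) \cap D(\Sigma_Q)$ is just the meet Observation applied to the single-path arguments $\Sigma_X = R$, while $D(\Sigma_P \vee \Sigma_Q) = D(\Sigma_P) \cup D(\Sigma_Q)$ uses connectedness: since $\Sigma_P \vee \Sigma_Q \simeq \Sigma_P + \Sigma_Q$ and a connected path $R$ maps into a disjoint union only by landing entirely in one component, $R \ra \Sigma_P \vee \Sigma_Q$ holds iff $R \ra \Sigma_P$ or $R \ra \Sigma_Q$.

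Finally, distributivity descends from the Boolean lattice of sets. Applying $D$ to the two sides of $\Sigma_P \wedge (\Sigma_Q \vee \Sigma_R) = (\Sigma_P \wedge \Sigma_Q) \vee (\Sigma_P \wedge \Sigma_R)$ and using the second fact, they are sent to $D(\Sigma_P) \cap (D(\Sigma_Q) \cup D(\Sigma_R))$ and $(D(\Sigma_P) \cap D(\Sigma_Q)) \cup (D(\Sigma_P) \cap D(\Sigma_R))$, which coincide because $\cap$ distributes over $\cup$; injectivity of $D$ then forces equality back in $\cal{P}_h$. The dual law follows symmetrically, or from the standard equivalence of the two distributive laws in any lattice. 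I expect the main obstacle to be the two identities of the second fact, and among them the join identity is the delicate one: it genuinely relies on each $k$-b-path being connected, so that the coproduct in $\cal{P}_h$ is realized by a disjoint union. Everything else is either bookkeeping with the already-proved universal properties of $\vee$ and $\wedge$ or the purely equational transport of distributivity through an injective, operation-preserving map.
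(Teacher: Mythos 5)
Your proof is correct, and it is worth noting that the paper gives no written proof at all (``details of proof we leave to the reader''), so the comparison is with the route the paper visibly intends rather than with an actual argument. That intended route is: the bounded lattice structure comes from the universal properties of $\vee$ and $\wedge$ (\Cref{def:sum_paths}, \Cref{def:path_prod} and the two unlabeled observations surrounding them), and distributivity is meant to follow from the componentwise expansion $\Sigma_P \wedge \Sigma_Q = \bigvee_{i,j}\left( P_i \wedge Q_j \right)$ displayed after \Cref{prop:tensorPathProduct}, from which the paper asserts ``$\wedge$ distributes over $\vee$.'' Your first two paragraphs follow exactly that scaffolding (order via homomorphisms, antisymmetry from sums of $k$-b-paths being cores, $\vee$ and $\wedge$ identified as least upper bound and greatest lower bound), but your distributivity argument is genuinely different: you embed $(\mathcal{P}_h,\preceq)$ into a power-set lattice by the down-set map $D(\Sigma_P)=\{R \mid R \ra \Sigma_P\}$, check that $D$ is an order embedding (injective up to isomorphism, which is the correct notion here precisely because sums of $k$-b-paths are cores), check that $D$ turns $\wedge$ into $\cap$ (the meet observation applied with $\Sigma_X$ a single path) and $\vee$ into $\cup$ (connectivity of paths plus $\Sigma_P \vee \Sigma_Q \simeq \core{\Sigma_P + \Sigma_Q}$), and then pull distributivity back through the injection. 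This is a Birkhoff-style representation argument; it buys both distributive laws at once and reduces everything to set algebra, and it correctly isolates the one step of real substance, namely $D(\Sigma_P\vee\Sigma_Q)=D(\Sigma_P)\cup D(\Sigma_Q)$, which holds because a connected $k$-b-path mapping into a disjoint union must land in a single component. The paper's componentwise route needs essentially the same connectivity fact to justify its expansion formula, so the two approaches are of comparable depth; yours has the advantage that, once the three properties of $D$ are checked, distributivity is purely mechanical rather than resting on an identity the paper only asserts.
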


		\subsection{Mountains}

		We will work with a specific set of $k$-b-paths in which every downward move should reach height $1$ before going back up. To give a formal description we first simplify the presentation of a $k$-b-paths $P$ by uniquely associating it to a word in $\set{\up,\dw}^*$. For this, we ignore the first and last arc of $P$, and then consider the sequence of arcs in $P$ as the letters. We note that not every word corresponds to a $k$-b-path, first of all the first and the last letter should be $\up$, secondly the height of vertices should not outreach the limit of $1$ and $k - 1$. To further simplify the notation,  write $\up^i$ for an $i$ number of consecutive $\up$ and similarly $\dw_i$ for an $i$ number of consecutive downward edges. For instance, the word associated with the path in Figure~\ref{fig:path} is $word(P_1) = \up \dw \up^2 \dw_2 \up^2$. Following this terminology, the \emph{concatenation} of two $k$-b-paths $P$ and $Q$, denoted $PQ$, is defined by $word(P.Q) = word(P)word(Q)$. The concatenation of a collection of words is denoted using $\bigodot$.	       
		       
		\begin{definition}
			 Given a sequence $\ell=(l_1,l_2, \cdots, l_r)$ where $1 \leq l_i  \leq k$, an \emph{$\ell$-mountain}, denoted $M_{\ell}$, is the $(k+2)$-b-path whose corresponding word is $\bigodot_{i \leq r} (\up^{l_i}\dw_{l_i}) \up^{k}$. For $i\leq r$ the part of the path corresponding to $\up^{l_i}\dw_{l_i}$ is said to be \emph{$i^{\text th}$-peak}, or simply a $\emph{peak}$. 
			 
			 If $l_1,l_2, \cdots, l_r$ is a decreasing sequence of positive integers, then $M_{\ell}$ is said to be a decreasing mountain.
		 \end{definition}
	 
	 It is easily observed that the word given in the definition satisfies the conditions of being a word for a $(k+2)$-b-path.
	 
	 
	 \begin{definition}\label{def:list_hom} A sequence $\ell=(l_1,l_2, \dots, l_p)$ where $1 \leq l_i  \leq h$ is said to be \emph{homomorphic} to the sequence $r=(r_1, r_2, \dots, r_q)$ if, first of all, $r$ is a subsequence of $\ell$ with $r_i=l_{j_i}$, secondly, $l_{1}, \dots l_{j_{1}} \leq l_{j_1}=r_1$,  thirdly, in the subsequence $l_{j_i}, \dots l_{j_{i+1}}$, $1\leq i\leq q-1$, all values are bounded above by $\max \{l_{j_i}, l_{j_{i+1}}\} = \max \{r_i, r_{i+1}\}$. 
	 \end{definition}
 
	 	In the next proposition we show that being homomorphic between two sequences is the same as existence of a homomorphism between the corresponding mountains.
	 	
	 	\begin{proposition}
			\label{prop:list_hom}
	 		Given two mountains $M_{\ell}$ and $M_{r}$ with their corresponding sequences $\ell=(l_1,l_2, \dots, l_p)$ and $r=(r_1, r_2, \dots, r_q)$ we have $M_{\ell}\to M_{r}$ if and only if $\ell$ is homomorphic to $r$.
	 	\end{proposition}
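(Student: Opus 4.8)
The plan is to read a homomorphism $\phi\colon M_\ell\to M_r$ as a \emph{folding} of the path $M_\ell$ onto $M_r$, and to recover the conditions of \Cref{def:list_hom} from the way the peaks fold. By \Cref{morphism preserves height} such a $\phi$ preserves height, and by \Cref{coro:sujectivityOfk-b-paths} it is surjective and sends beginning to beginning and end to end; since it also preserves the orientation of each arc, walking along $M_\ell$ from $b$ to $e$ and following $\phi$ traces on $M_r$ a walk whose sequence of up/down steps is exactly $word(M_\ell)$. Two elementary facts drive everything. First, the top of a peak of $M_\ell$ is a local maximum, i.e.\ both incident arcs point into it; a height- and orientation-preserving map sends it to a vertex both of whose incident arc-images point in, so its image is either the top of a peak of $M_r$ or a vertex onto which the two sides of the peak fold together. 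Second, if $w$ is the top of a peak of $M_r$ then any $v\in\phi^{-1}(w)$ can have no higher neighbour (an up-arc out of $v$ would map to an up-arc out of the sink $w$), hence $v$ is itself a peak top of $M_\ell$ of the same height; with surjectivity this shows every peak of $M_r$ is the image of a peak of $M_\ell$ of equal height. Call a peak of $M_\ell$ \emph{essential} when its top maps to a peak top of $M_r$.

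For the implication from ``$\ell$ homomorphic to $r$'' to ``$M_\ell\to M_r$'', I would build the walk explicitly from the witnessing indices $j_1<\dots<j_q$ with $l_{j_i}=r_i$. Map the initial ascent and each essential peak $j_i$ isometrically onto the corresponding ascent and peak $i$ of $M_r$, and map the closing $\up^{k}$ of $M_\ell$ onto the final ascent of $M_r$. Each remaining peak, of height $l$, is folded (walked up then straight back down) onto a monotone sub-slope of $M_r$ tall enough to reach height $1+l$: a peak before $j_1$ (where $l\le r_1$ by the second condition of \Cref{def:list_hom}) folds onto the ascending slope of peak $1$; a peak strictly between $j_i$ and $j_{i+1}$ (where $l\le\max\{r_i,r_{i+1}\}$ by the third condition) folds onto whichever of the descending slope of peak $i$ or the ascending slope of peak $i+1$ is tall enough; and a peak after $j_q$, where $l\le k$ holds automatically, folds onto the final ascent, which reaches height $k+1\ge 1+l$. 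Because every step stays on a genuine monotone sub-slope, orientations are respected and the walk is a homomorphism.

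For the converse, given $\phi$ I would list the essential peaks of $M_\ell$ from left to right; by the facts above their tops hit exactly the peak tops of $M_r$, and I claim they exhibit $r$ as a subsequence of $\ell$ with the stated bounds. The step I expect to be the main obstacle is to show the folding respects the order: that consecutive essential peaks map to consecutive peaks of $M_r$ — on the path $M_r$ one cannot reach the top of peak $i+1$ without first crossing the top of peak $i$ — and that every peak lying strictly between two consecutive essential peaks is non-essential, hence folds into the region of $M_r$ between peak tops $i$ and $i+1$, whose interior never exceeds height $\max\{1+r_i,1+r_{i+1}\}$; this forces $l\le\max\{r_i,r_{i+1}\}$, giving the third condition, and the same reasoning for the region before peak $1$ gives the second. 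No condition survives for peaks after the last essential peak precisely because the leftover region of $M_r$ still contains the whole final ascent $\up^{k}$ up to height $k+1$, which absorbs a peak of any admissible height. Making the informal ``cannot cross a peak without passing its top'' rigorous — bookkeeping repeated visits and peaks whose height equals a neighbouring $r_i$ — is the delicate part; I would organise it as an induction on the number of non-essential peaks, deleting one folded peak at a time and using \Cref{Prop:morphisms-betweenk-b-paths} to keep sizes under control.
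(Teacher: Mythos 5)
Your preliminary facts and your forward direction are sound, and the forward direction is essentially the paper's own construction: the essential peaks are mapped isometrically and every remaining peak is folded onto a neighbouring monotone slope, whose existence is exactly what the second and third conditions of \Cref{def:list_hom} guarantee.

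The converse direction, however, has a genuine gap, and it sits precisely where you placed your ``delicate part''. The structural claim it rests on --- that consecutive essential peaks of $M_{\ell}$ map to consecutive peaks of $M_{r}$, so that the essential peaks read left to right exhibit $r$ as a subsequence of $\ell$ --- is false, because the image walk may backtrack over peak tops it has already crossed. Take $r=(2,3,1)$ and $\ell=(2,3,2,3,1)$, with $k\geq 3$. Since in an oriented path every arc points from its lower endpoint to its higher endpoint, a walk may legally re-ascend a descending slope; so there is a homomorphism $M_{\ell}\to M_{r}$ which maps peak $1$ isometrically onto peak $1$, maps peak $2$ isometrically onto peak $2$ but folds its descent back down the ascending slope of peak $2$, sends peak $3$ (height $2$) up the descending slope of peak $1$ so that its top lands on the top of peak $1$, and finally maps peaks $4$ and $5$ isometrically onto peaks $2$ and $3$. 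Under this homomorphism \emph{every} peak of $M_{\ell}$ is essential, and the five peak tops visit the peak tops of $M_{r}$ in the order $1,2,1,2,3$: there are more essential peaks than peaks of $M_{r}$, their images are not monotone, and --- fatally for your plan --- there is no folded non-essential peak to delete, so the induction you propose never gets started; its base case is exactly the hard case. What remains is to \emph{select} which essential peaks witness the subsequence, and this selection cannot be read off the visiting order alone: here the left-to-right ``last visit'' selection $(3,4,5)$ violates the second condition of \Cref{def:list_hom} (as $l_2=3>l_3=2=r_1$), while on the mirror example $r=(3,2,1)$, $\ell=(3,2,3,2,1)$, carrying the same backtracking homomorphism, the ``first visit'' selection $(1,2,5)$ violates the third condition (as $l_3=3>\max(r_2,r_3)=2$). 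The paper's proof avoids this trap by not working with peak tops at all: it tracks the height-$1$ valley vertices $x_1,\dots,x_{p+1}$ and $y_1,\dots,y_{q+1}$ and chooses each index $k_{j+1}$ greedily as the \emph{maximal} valley index mapping to $y_{j+1}$ \emph{subject to} the intermediate peaks satisfying the required height bound; the whole content of that proof is the well-definedness of this height-aware greedy choice, and this mechanism is exactly what your outline is missing.
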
 
 	
 	\begin{proof}
 		\boxed{\Ra} Suppose that $l$ is homomorphic to $r$. We may find a homomorphism of $M_{\ell}$ to $M_{r}$ by mapping all the peaks at $l_{k_i}, \dots l_{k_{i+1}}$ to the $\max \{r_{k_i}, r_{k_{i+1}}\}$, taking an arbitrary choice when $r_{k_i}=r_{k_{i+1}}$. 
		
		\boxed{\La} Suppose $M_\ell$ is homomorphic to $M_r$, we first note that not every homomorphism of $M_{\ell}$ to $M_{r}$ is of the form defined in the previous part. However, to prove this direction of the claim, given a homomorphism, $\psi$, of $M_{\ell}$ to $M_{r}$, using the restriction of $\psi$ on vertices at height 1 of the two mountains we will find a mapping of $\ell$ to $r$. 
 		
 		Let $x_1, x_2, \dots, x_{p+1}$ be the vertices of $M_{\ell}$ at height $1$ and $y_1, y_2, \dots, y_{q+1}$ be the vertices of $M_{r}$ at height 1, each sequence ordered by the distance from the beginning of the path. The mapping $\psi$ must map $x_i$'s to $y_j$'s. 
		Observe that if $\phi(x_i) = y_j$, then there are three possible values for $\phi(x_{i+1})$: 
		\begin{itemize}[noitemsep]
		\item $\phi(x_{i+1}) = y_{j-1} \AND l_i = r_{j-1} \OR$
		\item $ \phi(x_{i+1}) = y_j \AND l_i \leq \max(r_{j-1},r_j) \OR$
		\item $  \phi(x_{i+1}) = y_{j+1} \AND l_j = r_j$ .
		\end{itemize}
		In order for this property to be well defined for $j=1 \AND j=q+1$, we define $r_0 = 0$ and $r_{q+1} = h$ (the height of the mountain).
		We construct a subsequence $(x_{k_j})$ of the $x_i$'s in the following way: 
		\begin{itemize}[noitemsep]
		\item $k_1$ is the maximal index such that $\psi(x_{k_1}) = y_1$ and all peaks until $l_{k_1}$ have height at most $r_1$.
		\item $k_{j+1}$ is the maximal index bigger than $k_j$ such that $\psi(x_{k_{j+1}}) = y_{j+1}$ and all peaks between $l_{k_j} \AND l_{k_{j+1}}$ have height at most $\max(r_j,r_{j+1})$.

		\end{itemize}

		What remains to show is that the sequence of $k_j$'s is well defined. To this end, all one needs to do is to show that, having defined $k_j$, there exists a $t> k_j$ such that $\psi(x_t)=y_{_{j+1}}$. For this, in fact we show that $\psi(x_{k_{_{j+1}}})=y_{_{j+1}}$. Towards a contradiction, suppose that $\psi(x_{k_j\!+1})\neq y_{j+1}$, then $\psi(x_{k_{j}\!+1})\in \set{x_{k_{j-1}}, x_{k_{j}} }$ and, furthermore, there exists a first index $t > k_j$, such that $\psi(x_t)=y_{j+1}$. Then $\set{ x_{k_{j}} , \dots, x_{t-1}}$ is surjectively mapped to $\set{y_{s}, \dots, y_{j}}$ for some $s\leq j$. Let $r_{s'}$ be the largest height of a peak between $y_{s}$ and $y_j$. Hence, for all $i \in [k_{s'}, t[$, $l_i \leq r_{s'}$. But in our process of choosing $k_{s'}$, we should have chosen a larger index. This contradiction verifies the claim that $\psi(x_{k_j\!+1}) = y_{j+1}$, completing the proof.
 	\end{proof}

An example of a homomorphism between two mountains trough homomorphic sequences is given in Figure~\ref{fig:morphism_mountain}.

\begin{figure}[htb]
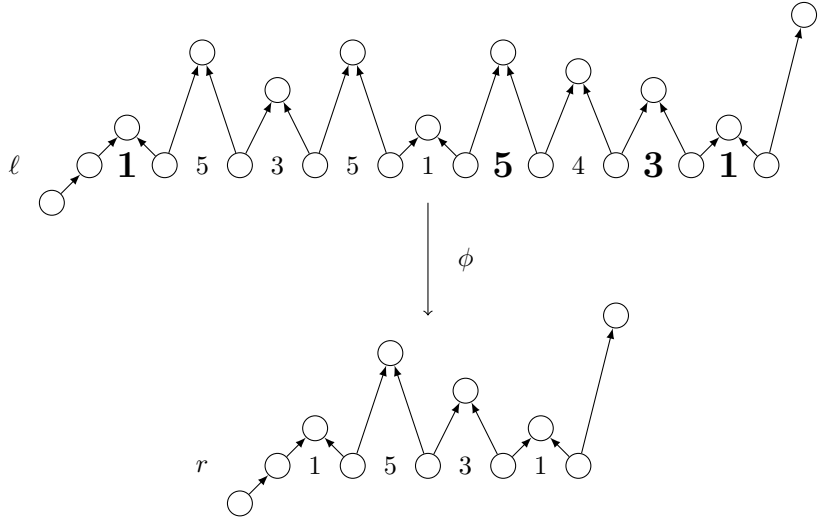

	\centering
	 \scalebox{1}{\tikzfig{tikz_fig/mountain}}
   	\caption{ Bold numbers in $\ell$ represent $r$ as a homomorphic subsequence.}
   	\label{fig:morphism_mountain}

\end{figure}

	The family of mountains will be useful as it is a large family of paths remaining easy to handle .

\section{Building a product that is a core}\label{sec:BuildingProduct}

To address the problem of building graphs whose product has a large core, in this section we will in fact provide a sufficient condition under which product of a set of cores itself is a core. To motivate our approach let us consider this observation: suppose $G_1$ is a digraph with a source vertex $s$ and $D_2$ is a digraph with a sink vertex $t$. Then in $G_1\times G_2$ the vertex $(s,t)$ is an isolated vertex ensuring that this digraph is not a core. To avoid such cases we will work with digraphs with an universal vertex.

\begin{definition}
	\label{def:cone}
	Given a digraph $G$, the digraph $\cone G$ is built from $G$ by adding  a universal vertex $\apex{G}$ that is linked in both directions with every vertex of $G$. 
\end{definition}

For example, $\cone{\vec P_2}$ is given in \Cref{fig:coneP2}. The usefulness of this construction comes from the fact that $|\cone G| = |G| + 1$ but $|\core{\cone G\times \cone H}|$ can be much larger than $|\core{G\times H}|$. This is the key fact in Lemma~\ref{lem:conecommute} and Theorem \ref{thm:VSCmultiple}.    

\begin{figure}[h]
\captionsetup{justification=raggedright} 

	\begin{minipage}[b]{0.4\textwidth}
		\centering
		\scalebox{1.2}{\tikzfig{tikz_fig/cone_P2}}
	\end{minipage}
	\begin{minipage}{.6\textwidth}
		\centering
		\scalebox{1}{\tikzfig{tikz_fig/product_cones}}
	\end{minipage}
	\par
	\begin{minipage}[t]{.4\textwidth}  
		\caption{$\cone{\vec P_2}$}
		\label{fig:coneP2}
	\end{minipage}
	\begin{minipage}[t]{.6\textwidth}  
		\caption{Diagram of $\cone G \times \cone H$}
		Observe that the digons induces a complete bipartite graph on both $G \times H \cup \set{(\apex G, \apex H)}$  and $(G \times \apex H)  \cup (\apex G \times H)$
		\label{fig:product_cones}
	\end{minipage}
\end{figure}

In working with $\cone{G}\times \cone{H}$, we will partition the vertices of $\cone G \times \cone H$ into four parts: $$\set{(\apex G, \apex H)},\ G \times \apex H  = \set{(u, \apex H) \mid u \in G},\ \apex G \times H = \set{(\apex G, x) \mid x \in H},\text{ and }V(G \times H).$$ 
If $G$ and $H$ are oriented (no digon), the connection between the sets is as in \Cref{fig:product_cones}.
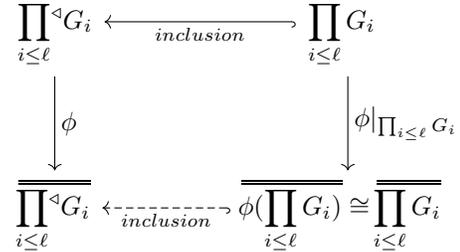
\begin{wrapfigure}[14]{r}{0.4\textwidth}
	\vspace{.3cm}
	\centering
	\begin{tikzcd}	
		\displaystyle \prod_{i \leq \ell} \cone G_i \arrow[dd, "\displaystyle \phi"]     &       & \displaystyle \prod_{i \leq \ell} G_i \arrow[ll, "inclusion",hook] \arrow[dd, " \displaystyle \phi|_{\prod_{i \leq \ell} G_i }", shift left] \\
															&  &                                                                              \\
		\core{\prod_{i \leq \ell} \cone G_i}  &       &   \core{\phi(\prod_{i \leq \ell} G_i )} \cong \core{\prod_{i \leq \ell} G_i } \arrow[ll, hook,"inclusion", dashed]               
	\end{tikzcd}
	\caption*{Commutative diagram of \Cref{lem:conecommute}}
\end{wrapfigure}
\begin{lemma} \label{lem:conecommute}
	Let $G_1, \dots, G_\ell$ be  a finite sequence of nontrivial oriented graphs. The retract of $\prod_{i \leq \ell} \cone G_i$ to a core of itself induces a homomorphism of $\prod_{i \leq \ell} G_i$ to itself.
	In particular, the diagram on the right commutes.
\end{lemma}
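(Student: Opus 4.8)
The entire statement reduces to a single claim: that $\phi$, the retraction onto $C=\core{\prod_{i \le \ell}\cone{G_i}}$, maps the \emph{pure} part $\prod_{i \le \ell} G_i$ (the vertices of the product having no apex coordinate) into itself. Granting this, $\phi|_{\prod_{i \le \ell} G_i}$ is an endomorphism of $\prod_{i \le \ell} G_i$; its image is hom-equivalent to $\prod_{i \le \ell} G_i$, hence shares its core, and every arrow in the diagram is then a restriction or corestriction of $\phi$, so commutativity is a formal check. The plan is therefore to prove the pure-preservation claim, and all the work lies in the rigidity forced by digons.

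First I would record the digon structure of the product. For a vertex $v=(v_1,\dots,v_\ell)$ let $S(v)=\set{i : v_i=\apex{G_i}}$ be its set of apex coordinates; the pure vertices are those with $S(v)=\emptyset$, and $a:=(\apex{G_1},\dots,\apex{G_\ell})$ is the unique vertex with $S(a)=[\ell]$. Since each $G_i$ is oriented, the only digons of $\cone{G_i}$ join $\apex{G_i}$ to a vertex of $G_i$, so $v\digon w$ holds in the product precisely when, in every coordinate, exactly one of $v_i,w_i$ is an apex, i.e. when $S(w)=[\ell]\setminus S(v)$. Two facts then follow that I will use: (i) the digon-neighbours of $a$ are exactly the pure vertices; and (ii) for any nonempty $S$ the class $\set{v : S(v)=S}$ is arc-free, because an arc inside it would require $\apex{G_i}\to\apex{G_i}$ in some coordinate $i\in S$, a loop, which does not exist.

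The heart of the argument is to show that $\phi$ fixes $a$ and keeps the pure class pure. Because a homomorphism carries a digon to a digon (never to a loop), $\phi$ sends each connected component of the digon graph into a single component. By (i) the component $W$ containing $a$ is the star $\set{a}\cup\prod_{i \le \ell} G_i$, so $\phi(W)$ lies in one digon component $W'$, which is the complete bipartite digon graph between the classes $S(v)=S'$ and $S(v)=[\ell]\setminus S'$ for some $S'$, with $\phi(a)$ on one side and, since $a\digon x$ for every pure $x$, all the $\phi(x)$ on the opposite side. Now I would use nontriviality: choosing an arc in each $G_i$ and taking their product produces an arc $x\to x'$ between two pure vertices, whence $\phi(x)\to\phi(x')$ is an arc inside the single class $S(v)=S'$. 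By (ii) this is impossible unless $S'=\emptyset$. Hence $S'=\emptyset$: every $\phi(x)$ is pure (and as a byproduct $\phi(a)=a$, its side being the class $S(v)=[\ell]=\set{a}$), which is exactly $\phi(\prod_{i \le \ell} G_i)\subseteq\prod_{i \le \ell} G_i$.

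The one genuinely delicate step — the main obstacle — is precisely this last dichotomy, namely ruling out that $\phi$ folds the pure part onto apex-bearing vertices; everything rests on the two rigidity facts above, that apex coordinates can only be matched across a digon and that an apex-bearing class carries no arc. The nontriviality hypothesis is used once and essentially, to guarantee a genuine arc inside $\prod_{i \le \ell} G_i$; with edgeless factors the independence argument yields no contradiction and the conclusion genuinely fails. To finish, reading off the diagram is routine: $\phi|_{\prod_{i \le \ell} G_i}$ is an endomorphism, so $\prod_{i \le \ell} G_i\simeq\phi(\prod_{i \le \ell} G_i)$ and the two share the core $\core{\prod_{i \le \ell} G_i}$; since the pure vertices induce exactly $\prod_{i \le \ell} G_i$ inside the product, this common core is an induced subgraph of $C=\core{\prod_{i \le \ell}\cone{G_i}}$, giving the dashed inclusion and the commutativity of the diagram.
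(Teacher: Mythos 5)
Your proposal is correct and takes essentially the same route as the paper: both arguments rest on the facts that digons in $\prod_{i \le \ell} \cone G_i$ join vertices with complementary sets of apex coordinates, that a class of vertices with a fixed nonempty apex set carries no arc, and that nontriviality supplies an arc between pure vertices. The paper packages this by first fixing the all-apex vertex (via copies of $\cone \vec P_2$, whose apex can only land on the all-apex vertex) and then mapping its neighborhood, the pure part, into itself, whereas you derive both conclusions at once from the digon-component analysis; this is a reorganization of the same rigidity argument, not a different method.
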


\begin{proof}
	In the \Cref{lem:conecommute}, the top horizontal arrow is the natural inclusions induced by the inclusion of $G_i$ in $\cone G_i$.
	Let $\phi$ be a retract of $\prod_{i \leq \ell} \cone G_i$ to a core of itself. 
	Because each $G_i$ is nontrivial, by considering induced subgraphs isomorphic to $\cone \vec P_2$, we have $\phi(\apex {G_1},\dots,\apex {G_\ell}) = (\apex {G_1},\dots,\apex {G_\ell})$.
	Then the neighborhood of $(\apex {G_1},\dots,\apex {G_\ell})$ is $(V(\prod_{i \leq \ell} G_i))$ and must map to the neighborhood of $\phi(\apex {G_1},\dots,\apex {G_\ell})$. In other words $\phi(\prod_{i \leq \ell} G_i) \subseteq (\prod_{i \leq \ell} G_i)$.
	And since $\phi$ is a retract $\core{\phi(\prod\limits_{i \leq \ell} G_i)} \cong \core{\prod\limits_{i \leq \ell} G_i}$. 
\end{proof}

The following is the key tool towards building a family of digraphs whose product is a core.

\begin{theorem}
	For any pair $G$ and $H$ of orthogonal and incomparable oriented graphs the product of their cones, $\cone G \times \cone H$, is a core.
\end{theorem}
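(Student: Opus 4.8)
The plan is to prove the statement by showing that every retract $\phi$ of $X := \cone G \times \cone H$ onto its core is the identity. First I would record the structure of $X$. Writing $a := (\apex G, \apex H)$, its vertices split into four parts: $\{a\}$, the product layer $V(G\times H)$, and the two ``cone layers'' $G\times\apex H$ and $\apex G\times H$. Since $G$ and $H$ are oriented, a direct check shows the digons of $X$ form two complete bipartite graphs (one joining $a$ to the product layer, one joining the two cone layers), so that the digon-neighbourhood of a vertex depends only on which part it lies in, and that these four neighbourhoods ($V(G\times H)$, $\{a\}$, $\apex G\times H$, $G\times\apex H$) are pairwise disjoint. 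By Lemma~\ref{lem:conecommute}, $\phi(a)=a$ and $\phi$ maps $V(G\times H)$ into itself, so $\phi|_{G\times H}$ is a retraction of $G\times H$. Here orthogonality enters: $\pi_G\circ\phi|_{G\times H}$ is a homomorphism $G\times H\to G$, hence surjective, so \emph{every} vertex of $G$ occurs as the $G$-coordinate of a fixed vertex of the product layer, and symmetrically for $H$.

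Next I would locate the images of the two cone layers. Because $\phi$ preserves digons and the four digon-neighbourhoods are pairwise disjoint, all of $G\times\apex H$ maps into a single part, and likewise $\apex G\times H$; the digon joining the two layers then leaves exactly four configurations: (I) each layer maps into itself; (II) the layers are swapped; (III) $G\times\apex H\to\{a\}$ while $\apex G\times H\to V(G\times H)$; and (IV) the symmetric collapse. The swap (II) is killed at once by idempotency: if $\phi(u,\apex H)$ landed in $\apex G\times H$, then $\phi^2(u,\apex H)$ would land in $G\times\apex H$, contradicting $\phi^2=\phi$.

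The technical heart is a domination argument, which I would isolate as follows. For a fixed $u$, choose (using the previous paragraph) a fixed product vertex $(u,x_u)$. The arcs of $X$ between $(u,x_u)$ and the layer $G\times\apex H$ are exactly $(u,x_u)\ra(w,\apex H)$ for $u\ra w$ and $(w',\apex H)\ra(u,x_u)$ for $w'\ra u$; applying $\phi$ and reading off $G$-coordinates, and writing $f:=\pi_G\circ\phi$ on $G\times\apex H$, one gets $f(N^+_G(u))\subseteq N^+_G(u)$ and $f(N^-_G(u))\subseteq N^-_G(u)$ for every $u$. Rearranging these inclusions shows that $f(w)$ dominates $w$ for every $w$, whence $f=\mathrm{id}_G$ since $G$ is a core (the only vertex dominating $w$ is $w$). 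The same statement holds for $H$ on the layer $\apex G\times H$. Note this works both when $\phi$ fixes the layer setwise (I) and when the layer is sent into the product (IV), since only the $G$-coordinate of $\phi$ is used.

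Finally I would finish each surviving configuration. In case (I) the claim says both layers are fixed pointwise; feeding this back into the same domination argument on the product layer (whose arcs now run to fixed layer-vertices) forces $\pi_G\phi(u,x)$ to dominate $u$ and $\pi_H\phi(u,x)$ to dominate $x$, so $\phi$ fixes every product vertex and $\phi=\mathrm{id}$. In the collapse case (IV) the claim gives $\phi(u,\apex H)=(u,c(u))$ with $(u,c(u))$ a fixed product vertex, where $c(u):=\pi_H\phi(u,\apex H)$; then for an arc $u\ra u'$ of $G$ the arc $(u,\apex H)\ra(u',c(u'))$ of $X$ maps to $(u,c(u))\ra(u',c(u'))$, so $c(u)\ra c(u')$ in $H$, exhibiting a homomorphism $c\colon G\ra H$ — impossible by incomparability. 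Case (III) is symmetric and produces $H\ra G$. I expect the collapse cases (III) and (IV) to be the main obstacle: they are the only place incomparability is used, and pinning down the contradiction requires the exact coordinate-preservation supplied by the domination step, which is in turn precisely what orthogonality (a fixed product vertex in every fibre) and the no-domination property of cores are for.
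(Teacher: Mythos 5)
Your proposal is correct and takes essentially the same route as the paper's proof: the same four-part decomposition of $\cone G \times \cone H$, \Cref{lem:conecommute} to fix $(\apex G,\apex H)$ and keep the product layer inside itself, orthogonality (via \Cref{prop:project+hom} and idempotency) to produce a fixed product vertex in every $G$-fibre, the domination-plus-core argument to pin down coordinates, and incomparability to rule out the two collapse configurations. The only difference is organizational: you classify the images of the two cone layers globally up front using the digon components, whereas the paper reaches the same three bad cases vertex-by-vertex; the substance of the argument is identical.
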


\begin{proof}
	Let $\phi$ be a retract of $\cone G \times \cone H$ to (one of) its core. The claim of the theorem is that $\phi$ is the identity. We will prove this separately on the four parts $\set{(\apex G, \apex H)}$, $ G \times \apex H$, $\apex G \times  H$, and $V(G \times H)$.
	
	For the first part, we have already seen in \Cref{lem:conecommute} that $\phi(\apex G,\apex H) = (\apex G,\apex H)$.
	Consider the restriction of $\phi$ on the second part, i.e.,  $G \times \apex H $. We first show that $\phi$ maps this set of vertices to itself.
	Toward a contradiction, suppose a vertex $(u, \apex H)$ of this set is mapped to another part. Thus we have three possibilities.

	\begin{itemize}
		\item $\phi(x,\apex H)  = (\apex G,y)$: In this case, since $\phi$ is a retract and $(\apex G,y) \in Im(\phi)$, we have that $\phi(\apex G,y) = (\apex G,y)$ which is impossible because $(x,\apex H) \digon (\apex G,y)$ and $\phi^2 = \phi$.
	
		\item $\phi(u,\apex H)=(\apex G,\apex H) $: In this case, since $ (u, \apex H)$ is adjacent to every vertex in $\apex G \times H$ (by digons) and since the neighborhood of $(\apex G, \apex h)$ is $G \times H$, we have $\phi(\apex G \times H) \subset G \times H$. Furthermore, any other $(v, \apex H)$ is also adjacent with digons to $\apex G \times H$, therefore $\phi(v, \apex H) = (\apex G, \apex H)$ \emph{i.e.} $G \times \apex H = (\apex G, \apex H)$.
		
		Next we claim that if $\phi(\apex G, x) = (v,x')$, then $x'$ dominates $x$ (i.e. $\vec N (x) \subset \vec N (x')$ and  $\rvec{N} (x) \subset \rvec{N} (x')$). As $H$ is a core, this would imply $x=x'$.
		Let $y$ be a neighbor of $x$, and, by symmetry,  assume that $y$ is an out-neighbour of $x$ .
		
		By \Cref{lem:conecommute}, we have that $\phi|_{G \times H}$ is a retract of $G \times H$. By \Cref{prop:project+hom} there exists a vertex $w$ such that $(w,y) \in Im(\phi)$.
		Recall that $\phi(\apex G, x) = (v,x')$, moreover, since $\phi$ is a retract and $(w,y)\in Im(\phi)$, $\phi(w,y) = (w,y)$. Since $(w,y)$ is an out-neighbour of $(\apex G, x)$ in $\cone G \times \cone H$ we have $(w,y) \tikzRarrow (v,x')$ which means, in particular, that $y$ is an out-neighbour of $x'$. This complete the proof of $x'$ dominating $x$ implying $\phi(\apex G,x) = (v,x)$.
		
		Now we claim that $H \ra G$ which is in contradiction with $H$ and $G$ being incomparable.
		Let $x \tikzRarrow y$  be an arc of $H$. Let $\phi(\apex G,x) = (v,x)$, $\phi(\apex G,y) = (w,y)$. Observe that $(\apex G, x) \tikzRarrow (w,y)$. Applying $\phi$ and noting that it is a retract, we have $(v,x) \tikzRarrow (w,y)$ and thus $v \tikzRarrow w$. This proves that the restriction of $\phi$ on $\apex G \times H$ composed with the projection of $G \times H$ to $G$ is a homomorphism of $H$ to $G$.
		
		\item $\phi(x,\apex H) \in G \times H$: In this case, considering the digons in \Cref{fig:product_cones} connecting $\apex G \times H$ and $G \times \apex H$, we conclude that $\phi(\apex G, \times H) = (\apex G,\apex H)$. Symmetrically to the above case, we reach a contradiction.
	\end{itemize}		
		Having proved $\phi(G\times \apex H) \subset G\times \apex H$ we aim to prove that $\phi|_{G\times \apex H}$ is the identity. Let $\phi(u,\apex H) = (u',\apex H)$. Once again, we show that $u'$ dominates $u$ which would imply $u'=u$.
		To that end, consider a neighbor $v$ of $u$, and, by symmetry, assume $u \tikzRarrow v$. Then, by \Cref{lem:conecommute} and \Cref{prop:project+hom}, there exists $(v,y) \in Im(\phi)$. Observe that $(u, \apex H) \tikzRarrow (v,y)$, applying $\phi$ which is a retract, we get $(u', \apex H) \tikzRarrow (v,y)$. In particular $u' \tikzRarrow v$ meaning that $u'$ dominates $u$.
		
		By symmetry $\phi|_{\apex G\times H}$ is also the identity.
		To see that $\phi|_{G\times H}$ is the identity, we note that the neighborhood of a vertex $(u,x)$ in $\apex G\times H \cup G\times \apex H$ uniquely determines $(u,x)$. 
\end{proof}

So far, we have provided a sufficient condition for the product of two digraphs to be a core. In the next theorem, we do the same for an arbitrary number of digraphs, but for the theorem to work, the condition we require are stronger.

\begin{theorem}\label{thm:VSCmultiple}
	If $\cal G = \set{G_1,\dots ,G_\ell}$ is a family of oriented incomparable cores verifying:
	\begin{enumerate}
		\item[(a)] $\forall G \in \cal G$, we have $ \prod\limits_{\substack{H \in \cal G \\ H \neq G}}H \not \ra G$ and $\prod\limits_{\substack{H \in \cal G \\ H \neq G}}H \ra \cone G$
		\item[(b)] $\cal G$ is an orthogonal family: $ \forall G \in \cal G$, we have $ \prod\limits_{H \in \cal G}H \rra G$, 
	\end{enumerate}
	then $ \prod\limits_{G \in \cal G} \cone G \tx{ is a core }$
\end{theorem}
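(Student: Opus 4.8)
The plan is to take an arbitrary retract $\phi$ of $\prod_{G\in\cal G}\cone G$ onto a core of itself and show it must be the identity. First I would stratify the vertex set: for $S\subseteq[\ell]$ (with $[\ell]=\set{1,\dots,\ell}$) let $L_S$ be the set of vertices whose apex coordinates are exactly those in $S$, so that $L_S$ is indexed by $\prod_{i\notin S}V(G_i)$, with $L_\emptyset=\prod_i G_i=:P$ and $L_{[\ell]}=\set{\mathbf a}$ where $\mathbf a:=(\apex{G_1},\dots,\apex{G_\ell})$. Two structural facts drive everything. An arc can run between $L_S$ and $L_T$ only when $S\cap T=\emptyset$ (a shared apex coordinate would force a loop at an apex), so the $G_i$-structure of a layer is carried by its arcs to \emph{other} layers rather than by arcs inside it; and, more sharply, $L_S$ and its complementary layer $L_{[\ell]\setminus S}$ are \emph{completely joined by digons}: since each $\cone{G_i}$ has digons only between $\apex{G_i}$ and $V(G_i)$, a digon in the product forces exactly one apex in every coordinate, i.e. complementary apex sets. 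This is the $\ell$-fold version of the two-block picture drawn in \Cref{fig:product_cones}.

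For the anchoring step, \Cref{lem:conecommute} gives $\phi(\mathbf a)=\mathbf a$, $\phi(P)\subseteq P$, and that $\phi|_P$ is a retract of $P$ with $\core{\phi(P)}\cong\core P$. The engine for all later identifications is a domination argument powered by hypothesis (b): orthogonality of $\cal G$ makes each projection $\pi_j$ of $P$ composed with $\phi|_P$ surjective, so for every $j$ and every $y\in V(G_j)$ there is a fixed point of $\phi$ whose $j$-th coordinate is $y$ (the $\ell$-fold analogue of \Cref{prop:project+hom}). With such fixed points as ``probes'', the standard move to show $\phi$ fixes a vertex $v$ is to pick a neighbour $w$ of $v$ realised by such a probe (so $\phi(w)=w$), apply $\phi$ to the arc between them, and read off that $\phi(v)$ dominates $v$; since each $G_i$ is a core this forces $\phi(v)=v$.

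Next I would peel off the layers using hypothesis (a), which is exactly the needed strengthening of ``incomparable''. Writing $P_{\hat{j}}:=\prod_{i\neq j}G_i$ (the indexing set of $L_{\set{j}}$), the non-homomorphism $P_{\hat{j}}\not\to G_j$ plays the role that incomparability played in the hard case of the two-factor proof: it forbids a layer from collapsing ``downward'' into a lower-apex layer, since any such collapse would exhibit a homomorphism $P_{\hat{j}}\to G_j$. The companion condition $P_{\hat{j}}\to\cone{G_j}$ supplies the homomorphisms and adjacencies that keep the remaining cases consistent. Combining these with the complete digon join lets one propagate ``maps into itself'' across complementary layers: once $\phi$ is the identity on $L_S$, any $v\in L_{[\ell]\setminus S}$ has $\phi(v)$ digon-adjacent to all of $L_S$, and the only vertices with that property lie in $L_{[\ell]\setminus S}$, so this layer is preserved too; the domination engine then upgrades ``preserved'' to ``fixed pointwise''. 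Finally $P$ itself is pinned down because each of its vertices is determined by its now-fixed neighbours in the apex layers.

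The main obstacle will be the \emph{intermediate} layers $L_S$ with $1<|S|<\ell-1$, which do not occur when $\ell=2$. The difficulty is structural: a middle-dimensional layer has no strictly-lower-dimensional arc-neighbour to be read off from, so the clean neighbourhood-determination used for the full product in the two-factor proof is not directly available. The real work is therefore to fix a correct processing order — pairing each layer with its digon complement and inducting outward from the fixed vertex $\mathbf a$ and from $P$ — and to verify case by case that (a) and (b) are precisely strong enough to close every transition. I expect the most delicate point to be confirming that $P_{\hat{j}}\to\cone{G_j}$, rather than the weaker orthogonality-plus-incomparability that sufficed for $\ell=2$, is exactly what licenses the multi-factor domination arguments.
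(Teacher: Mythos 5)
Your skeleton is the same as the paper's (take a retract $\phi$, anchor it at $(\apex{G_1},\dots,\apex{G_\ell})$ and at $P=\prod_i G_i$ via \Cref{lem:conecommute}, use orthogonality to manufacture fixed ``probes'' and domination to pin vertices, use hypothesis (a) to forbid a layer collapse), but the two mechanisms you rely on fail as stated, and they fail exactly at the points you defer as ``the real work''. First, your domination engine has a hole: orthogonality (b) gives, for each $j$ and each $y\in V(G_j)$, a fixed point of $\phi$ whose $j$-th coordinate is $y$, but with \emph{no control over the other coordinates}, so nothing makes this probe a neighbor of the vertex $v$ you are trying to fix. Adjacency to an arbitrary probe is automatic only when $v$ is an apex in every coordinate but one --- the paper's ``basis elements'' --- because the apex sees everything in both directions. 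This forces the paper's processing order: it first fixes the basis elements (its Claim 1, which is where (a) enters: otherwise the digon structure shows $\pi_1\circ\phi$ maps $\cone{G_1}\times\prod_{i\neq 1}G_i$ into $G_1$, and since (a) gives $\prod_{i\neq 1}G_i\to\cone{G_1}$, hence $\cone{G_1}\times\prod_{i\neq 1}G_i\simeq\prod_{i\neq 1}G_i$, one gets $\prod_{i\neq 1}G_i\to G_1$, contradicting the first half of (a)), and only then fixes $P$, using the basis elements --- not the raw orthogonality probes --- as the neighbors to read domination from. Your plan runs this in the reverse order, fixing $P$ first from orthogonality probes, and that step does not go through.

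Second, your propagation scheme cannot reach the layers you correctly identify as the obstacle: complementation pairs $L_\emptyset$ with $L_{[\ell]}$ and pairs each intermediate layer with another intermediate layer, so an induction seeded only at $\mathbf a$ and $P$ never enters any $L_S$ with $0<|S|<\ell$. The paper does not process intermediate layers as layers at all; it proves two coordinate-wise statements --- apex coordinates are preserved in both directions (its Claim 3), and a non-apex coordinate is fixed provided every other coordinate has nonempty in- and out-neighborhood (its Claim 4, whose probes live in $P$, already fixed) --- and then handles an arbitrary vertex by choosing in each coordinate a digon-adjacent substitute $\beta_i\in\set{\apex{G_i},z_i}$, where $z_i\in V(G_i)$ has both an in- and an out-neighbor (such $z_i$ exists since each $G_i$ contains $\vec P_3$). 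This last device is unavoidable because of sources and sinks in the $G_i$: a vertex having a sink (resp.\ source) as some non-apex coordinate has no out-neighbor (resp.\ in-neighbor) in $P$ whatsoever, so $P$-probes cannot see it, and one must route the domination argument through vertices with more apex coordinates. Your outline never confronts this source/sink issue, so the case analysis you postpone is not mere verification --- the ordering you propose would have to be replaced by the coordinate-wise argument, not just checked.
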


\begin{proof}
	Consider a retract $\phi$ of $ \prod\limits_{G \in \cal G} \cone G$ to itself, we shall prove that $\phi$ is the identity.
	
	We consider vertices of the product where each coordinate except one is an apex and refer to them as \emph{basis elements}.
	By symmetry of the coordinates, we work with vertices of the form $(x_1,\apex {G_2} \dots \apex{G_{\ell}})$ and show that:

    \vspace{5pt}
    \textbf{Claim 1:} The restriction of $\phi$ onto basis elements is the identity.\\
	We first show that  $\phi(x_1,\apex{G_2}, \dots, \apex{G_{\ell}}) \not \in \apex{G_1}\! \times \cone G_2 \times \dots \times \cone G_\ell  $. Assume to the contrary that $\phi(x_1,\apex{G_2}, \dots, \apex{G_{\ell}}) = (\apex{G_1},\alpha_2,\dots,\alpha_\ell)$, observe that $\apex{G_1}\! \times G_2 \times \dots \times G_\ell$ is the set of vertices each adjacent to the vertex $(x_1,\apex{G_2} ,\dots, \apex{G_{\ell}})$ with a digon. Therefore, for each element $(\apex{G_1}, x_2, \dots , x_\ell)$ of $\apex{G_1}\! \times G_2 \times \dots \times G_\ell$, its image under $\phi$ is adjacent to $\phi(x_1,\apex{G_2}, \dots, \apex{G_{\ell}})$ with a digon. It follows that $\phi(\apex{G_1}, x_2, \dots , x_\ell) \in G_1 \times \cone G_2 \times \dots \times \cone G_\ell$. In other words $\pi_1 \circ \phi(\apex{G_1}\! \times G_2 \times \dots \times G_\ell) \subseteq V(G_1)$.
		Independently, using \Cref{lem:conecommute}, we have $\pi_1 \circ \phi(G_1\! \times G_2 \times \dots \times G_\ell) \subseteq V(G_1)$.
		
		Combining the two together, we conclude that $\pi_1 \circ \phi$ maps $\cone G_1\! \times G_2 \times \dots \times G_\ell$ to $ G_1$. 
		But by the second part of the assumptions $(a)$, we have $G_2 \times \dots \times G_\ell \ra \cone G_1$. Thus $\cone G_1\! \times G_2 \times \dots \times G_\ell$ is homomorphically equivalent to $G_2 \times \dots \times G_\ell$. Hence we have that $G_2 \times \dots \times G_\ell \to G_1$, however this contradicts the fist part of assumption $(a)$.
		
		\vspace{5pt}
		
		To complete the proof of Claim 1, assume $\phi(x_1,\apex{G_2} \dots \apex{G_{\ell}}) \in x_1'\! \times \cone G_2 \times \dots \times \cone G_\ell  $, with $x_1' \in G_1$. 
	    We aim to prove that $x_1'$ dominates $x_1$ which would imply $x_1'=x_1$. Toward that, consider a neighbor $y_1$ of $x_1$ in $G_1$. By symmetry, assume $x_1 \tikzRarrow y_1$. By assmuption \emph{(b)} and using the \Cref{prop:project+hom}, we know the existence of $y_2, \dots,y_\ell$ such that $(y_1,y_2,\dots, y_\ell) \in Im(\phi)$.
	    Observe that $(x_1,\apex{G_2} \dots \apex{G_{\ell}}) \tikzRarrow (y_1,y_2,\dots, y_\ell)$. Applying $\phi$, and noting that it is a retract, we get $\phi(x_1,\apex{G_2} \dots \apex{G_{\ell}})\tikzRarrow (y_1,y_2,\dots, y_\ell)$. Then looking at the first coordinate, we get $x_1' \tikzRarrow y_1$, showing that $x_1'$ dominates $x_1$.
		
		\vspace{5pt}
		
	\textbf{Claim 2:} $\phi|_{G_1 \times \dots \times G_\ell}$ is the identity.\\
	Let $(x_1,\dots,x_\ell) \in G_1 \times \dots \times G_\ell$. By the \Cref{lem:conecommute}, we already know that $\phi(x_1,\dots,x_\ell) \in G_1 \times \dots \times G_\ell$. Hence, we assume $\phi(x_1,\dots,x_\ell) = (x_1',\dots,x_\ell') $.
	By the symmetry of the coordinates, it is enough to show that $x_1'$ dominates $x_1$, and since $G_1$ is a core, it would imply $x_1'=x_1$.
	
	Let $y$ be a neighbor of $x_1$ in $G_1$, and by symmetry, assume that $x_1 \tikzRarrow y$. Then $(x_1,\dots,x_\ell) \tikzRarrow (y, \apex{G_2}, \dots, \apex{G_{\ell}})$.
	By applying $\phi$ we have $(x_1',\dots,x_\ell') \tikzRarrow \phi(y, \apex{G_2}, \dots, \apex{G_{\ell}})$. In particular, by restriction to the first coordinate, we conclude $x'_1 \tikzRarrow y$ meaning that $x_1'$ dominates $x_1$ and thus $x_1' = x_1$. Similarly, we have the same for all other coordinates, therefore $\phi(x_1,\dots,x_\ell) = (x_1,\dots,x_\ell)$, and $\phi$ is the identity on $G_1 \times \dots \times G_\ell$.
	\vspace{5pt}
	
	Next, we have to prove the same on the remaining vertices of $\prod\limits_{G \in \cal G} \cone G$. 
	For this, we will need to pay close attention to the vertices with empty out-neighborhood or in-neighborhood in $G_i$. We write $\vec{N}_{G_i}(\alpha_i)$ (\emph{resp.} $\rvec{N}_{G_i}(\alpha_i)$) for the out-neighborhood in $G_i$  (\emph{resp.} in-neighborhood) of $\alpha_i \in \cone G_i$. In particular $\vec{N}_{G_i}(\apex{G_i}) = V(G_i)$.
	
	Since the graphs are cores, they cannot have any isolated vertices, except if $\cal G = \set{K_1}$. 
	Furthermore, we can assume that each $G_i$ contains a $\vec P_3$ as a subgraph. Otherwise, since they are incomparable cores, this would imply $\cal G = \set{\vec P_2}$. Therefore, in every $G_i$, there is a vertex $z_i$ such that $N_{G_i}^\ra(z_i) \neq \emptyset$ and $N_{G_i}^\la(z_i) \neq \emptyset$.
	
	\textbf{Claim 3:} If $\phi(\alpha_1,\dots,\alpha_\ell) = (\alpha_1',\dots,\alpha_\ell')$ then $\alpha_i' = \apex{G_i} \iff \alpha_i = \apex{G_i}$.\\
	We consider several cases:
	\begin{enumerate}
	\item[I.] Assume $\alpha_1 = \apex{G_1}$ and $\vec{N}_{G_i}(\alpha_i) \neq \emptyset$ for $i=2, \ldots, \ell$. Toward a contradiction, assume $\alpha_1' = x_1'\in G_1$.
	For $i \neq 1$, chose $y_i \in G_i$ such that $\alpha_i \tikzRarrow y_i$. Then $(\apex{G_1},\alpha_2,\dots,\alpha_\ell) \tikzRarrow (x_1',y_2,\dots,y_\ell)$. From Claim 2., we have that $(x_1',y_2,\dots,y_\ell)$ is fixed by $\phi$, therefore $\phi(\apex{G_1},\alpha_2\dots,\alpha_\ell) = (x_1',\alpha_2'\dots,\alpha_\ell') \tikzRarrow (x_1',y_2,\dots,y_\ell)$. That would imply a loop on $x_1'$ in $G_1$ which is a contradiction.
	
	\item[II.] Assume $\alpha_1 = x_1 \neq \apex{G_1}$.
	Let $\beta_i$ be a vertex of $\cone G_1$ such that $\alpha_i \digon \beta_i$ and $N_{G_i}^\ra(\beta_i) \neq \emptyset$. It exists because one can chose $\apex{G_i}$ if $\alpha_i \neq \apex{G_i}$, or $z_i$ if $\alpha_i = \apex{G_i}$. Observe that 
	$(x_1,\alpha_2,\dots,\alpha_\ell) \digon (\apex{G_1},\beta_2,\dots,\beta_\ell)$. We have already seen in I that $\phi(\apex{G_1},\beta_2\dots,\beta_\ell)$ preserves the first coordinate, and since $\phi(\apex{G_1},\beta_2\dots,\beta_\ell) \digon \phi(x_1,\alpha_2,\dots,\alpha_\ell)$, we have $\alpha_1' \digon \apex{G_1}$ and, in particular, $\alpha_1' \neq \apex{G_1}$
	
	\item[III.] Assume $\alpha_1 = \apex{G_1}$,  with no assumption on the $\alpha_i$ this time. Using the same $\beta_i$ as in II and any $x_1 \in G_1$, since $(\apex{G_i},\alpha_2,\dots,\alpha_\ell) \digon (x_1,\beta_2,\dots,\beta_\ell)$, we have $\phi(\apex{G_i},\alpha_2,\dots,\alpha_\ell) \digon \phi(x_1,\beta_2,\dots,\beta_\ell)$. We have  $\alpha_1 \digon \pi_{1} \circ \phi(x_1,\beta_2,\dots,\beta_\ell)$. By II, $\pi_{1} \circ \phi(x_1,\beta_2,\dots,\beta_\ell) \ni G_1$ and, therefore, $\alpha_1 = \apex{G_1}$.
	
	\end{enumerate}
	
	 
	\textbf{Claim 4:}  Assume $\phi(x_1,\alpha_2,\dots,\alpha_\ell) = (\alpha_1',\alpha_2',\dots,\alpha_\ell')$. If $\forall i, i\neq 1,  \vec N_{G_i}(\alpha_i) \neq \emptyset$ and $\rvec{N}_{G_i}(\alpha_i) \neq \emptyset$, then $\alpha_1' = x_1$.\\
	 By Claim 3, we know that $\alpha_1' \in G_1$ hence we write $\alpha_1' = x_1'$.
	 Consider a neighbor $y_1$ of $x_1$. By symmetry, assume $x_1\tikzRarrow y_1$. Then for $i>1$, chose $y_i \in G_i$ such that $\alpha_i \ra y_i$.
	 Then $(x_1,\alpha_2\dots,\alpha_\ell) \tikzRarrow (y_1,\dots,y_\ell)$. By applying $\phi$, noting that, by Claim 2, $\phi$ fixes $(y_1,\dots,y_\ell)$, we get $(x_1',\alpha_2',\dots,\alpha_\ell') \tikzRarrow  (y_1,\dots,y_\ell)$. In particular, $x_1' \tikzRarrow y_1$. 
	 Therefore $x_1'$ dominates $x_1$ in $G_1$ and hence $x_1'= x_1$.
	 \vspace{5pt}
	
	To prove the theorem, we consider $\phi(\alpha_1, \dots, \alpha_\ell) = (\alpha_1', \dots, \alpha_\ell')$. If $\alpha_1 = \apex{G_1}$ then, by Claim 3, $\alpha_1' = \apex{G_1}$. Otherwise let $\alpha_1 = x_1$ then by Claim 3, $\alpha_1' = x_1'$.
	Note that now, we have no restriction on the $\alpha_i$'s. Consider $y_1$ a neighbor of $x_1$, by symmetry, assume $x_1 \tikzRarrow y_1$. For $i\neq 1$, let $\beta_i \in \set{\apex{G_i}, z_i}$ such that $\alpha_i \digon \beta_i$. Then $\phi(x_1,\alpha_2\dots,\alpha_\ell) \tikzRarrow \phi(y_1,\beta_2,\dots,\beta_\ell)$. Observing that $\beta_i$'s satisfy the condition of Claim 4, $y_1$ is the first coordinate of $\phi(y_1,\beta_2,\dots,\beta_\ell)$. Hence $x_1' \tikzRarrow y_1$. Therefore $x_1'$ dominates $x_1$ in $G_1$ thus $x_1'=x_1$. 
\end{proof}

\section{Decreasing mountains}
\label{sec:Decreasing_mountains}

In the previous section, we have provided conditions under which the tensor product of a family of digraphs is a core. Here we provide examples. 
\begin{definition}
	A mountain $M_{\partial}$ is said to be \emph{decreasing} if the corresponding sequence $\partial = (\partial_1, \dots, \partial_\ell)$ is (strictly) decreasing. The set of all decreasing mountains of height $h$ with $\ell$ peaks satisfying $\partial_1 = h-2$ is denoted by $DM_{h,\ell}$ 
\end{definition}

We show that $DM_{h,\ell}$ satisfies the conditions of \Cref{thm:VSCmultiple}. This is done in the next three propositions.

\begin{figure}[htb]
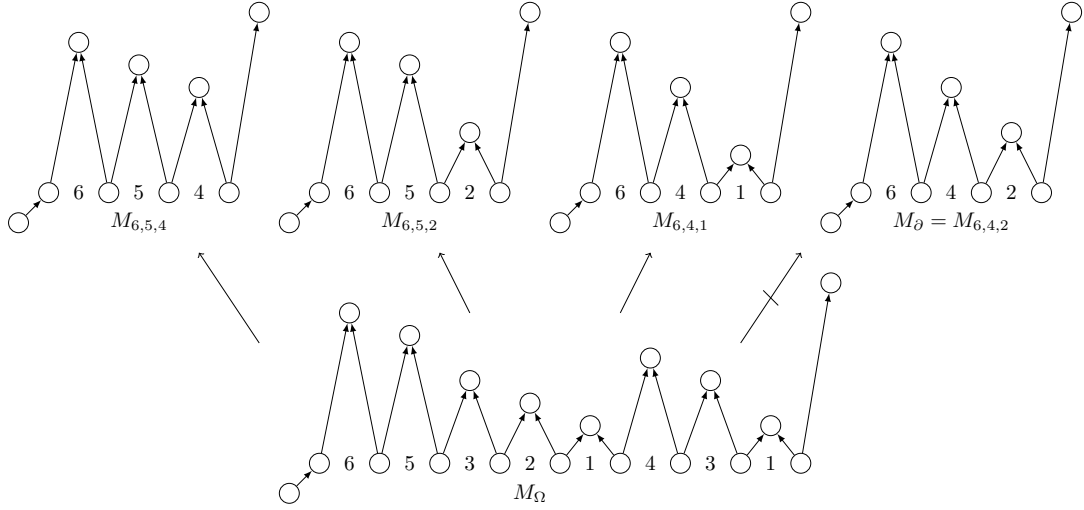
 
 	\centering
 	\scalebox{.8}{\tikzfig{tikz_fig/omega_mountain}}
 	\caption{Example of decreasing mountains, and the sequence $\Omega$ defined in the proof of \Cref{mountain_indep}} \label{fig_omega_mountain}
\end{figure}

\begin{proposition}
	For every member $M_{\partial}$ of $DM_{h,l}$ we have: $\prod\limits_{M_{\delta} \neq M_{\partial}}{M_{\delta}} \ra \cone M_{\partial}$
\end{proposition}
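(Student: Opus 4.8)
The plan is to reduce to a single factor. Since the tensor product $\prod_{M_\delta \neq M_\partial} M_\delta$ carries a projection homomorphism $\pi_{\delta_0}$ onto each of its factors $M_{\delta_0}$, it suffices to exhibit one mountain $M_{\delta_0}$ of the family with $\delta_0 \neq \partial$ for which $M_{\delta_0} \to \cone{M_\partial}$; composing $\pi_{\delta_0}$ with such a map yields the desired homomorphism. (We may take any member of $DM_{h,\ell}$ other than $M_\partial$, the family being non-degenerate.) The whole content is therefore to build an explicit $g \colon M_{\delta_0} \to \cone{M_\partial}$, and the key structural fact I will lean on throughout is that every arc of a b-path raises height by exactly $1$.

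The construction uses the following reading of the target: a map into $\cone{M_\partial}$ is a homomorphism precisely when the preimage $A$ of the apex $\apex{M_\partial}$ carries no arc and the complement maps homomorphically into $M_\partial$, since $\apex{M_\partial}$ is joined by digons to all of $M_\partial$. I would let $A$ consist of all local minima of $M_{\delta_0}$ (the height-$1$ vertices both of whose neighbours have height $2$) together with all local maxima of height at least $3$ (the tops of the peaks with $\delta_{0,i} \geq 2$). This set is independent: two parked maxima are non-adjacent because a local maximum has both neighbours at strictly smaller height; two parked minima share height $1$ and hence cannot span an arc; and a parked minimum (height $1$) and a parked maximum (height $\geq 3$) differ in height by at least two, so they are non-adjacent as well.

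Deleting $A$ breaks $M_{\delta_0}$ into monotone directed subpaths (maximal ascending or descending runs), together with a few isolated vertices, namely the tops of the degenerate height-$1$ peaks, whose two neighbours are both parked minima. Here I use crucially that $\partial_1 = h-2$: the first peak of $M_\partial$ supplies a monotone ascending path from $b_\partial$ (height $0$) up to height $h-1$ and a monotone descending path from height $h-1$ back down to height $1$, while the final ascent supplies a monotone ascending path from height $1$ up to $e_\partial$ (height $h$). Every ascending or descending piece of $M_{\delta_0}$ has all heights in $\{0,\dots,h-1\}$, so it embeds, by matching heights, into the ascending or the descending ramp of the first peak; the unique piece that reaches height $h$ (the one containing the end vertex) embeds into the final ascent; and each isolated top maps to any height-$2$ vertex. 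Arcs joining a piece to $A$ are automatically satisfied by the digons at the apex, so $g$ is a homomorphism.

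The main obstacle is the bookkeeping of this second step. One must verify that $A$ is genuinely independent — which is what forces the precise threshold ``height $\geq 3$'' for the parked maxima, so that the degenerate height-$1$ peaks are handled by isolating their tops rather than by parking them — and confirm that the leftover pieces are exactly monotone runs fitting the three ramps of $M_\partial$ guaranteed by $\partial_1 = h-2$. Note that this argument never needs $\delta_0$ and $\partial$ to be comparable: any decreasing mountain of height $h$ maps into $\cone{M_\partial}$ once $M_\partial$ has a full first peak. With the explicit $g$ in hand, the proposition follows at once by composing with $\pi_{\delta_0}$.
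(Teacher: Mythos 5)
Your argument is correct, and its opening reduction (project the product onto a single factor $M_{\delta_0}$, then map that factor into $\cone M_{\partial}$) is exactly how the paper proceeds; but from there the two proofs diverge completely. The paper's entire proof is one line: every mountain, being an oriented path, maps to a digon --- send vertices of even height to one vertex of the digon and vertices of odd height to the other, which is a homomorphism because every arc joins consecutive heights --- and $\cone M_{\partial}$ contains a digon, namely the apex $\apex{M_{\partial}}$ together with any vertex of $M_{\partial}$; composing gives the claim with no case analysis at all. Your piecewise construction (parking the valleys and the tall peak-tops at the apex as an independent set, then embedding the leftover monotone runs into the two ramps of the first peak and into the final ascent) does produce a valid homomorphism: the independence check, the decomposition into monotone runs plus isolated tops of height-$1$ peaks, and the height bookkeeping all go through, since every arc of a b-path goes from height $t$ to height $t+1$ and each monotone ramp has exactly one vertex per height. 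But it is far more machinery than needed, and it leans on hypotheses that are irrelevant to the statement: the digon argument never uses $\partial_1 = h-2$, the decreasing property, or even that the factors are mountains --- only that each factor is an oriented path (hence has a height function, i.e., a parity $2$-coloring) and that a cone always contains a digon. The one thing your construction buys is an image that genuinely meets $M_{\partial}$ rather than collapsing everything onto a single digon, but nothing in this proposition or its later use requires that.
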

\begin{proof}
	In fact each $M_{\delta}$ maps to a digon, therefore $\prod\limits_{M_{\delta} \neq M_{\partial}}{M_{\delta}}$ maps to a digon.
\end{proof}

\begin{proposition}
	For every member $M_{\partial}$ of $DM_{h,l}$ we have: $\prod\limits_{M_{\delta}\in DM_{h,l}}{M_{\delta}}\rra M_{\partial}$
\end{proposition}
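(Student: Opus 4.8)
The plan is to reduce the statement to showing that \emph{every} homomorphism $\phi$ from $P:=\prod_{M_\delta\in DM_{h,\ell}}M_\delta$ to a fixed member $M_\partial$ is surjective; one homomorphism exists for free, namely the projection $\pi_{M_\partial}$. Writing $h=k+2$, every member is an $h$-b-path, so the underlying graph of $M_\partial$ is a path with a unique vertex $b$ of height $0$ and a unique vertex $e$ of height $h$. Let $\hat b$ and $\hat e$ be the vertices of $P$ all of whose coordinates equal the beginning, resp.\ the end. The whole argument then rests on three observations: that $\hat b$ and $\hat e$ lie in a common connected component $C_0$ of $P$; that any $\phi$ sends $\hat b\mapsto b$ and $\hat e\mapsto e$; and that a connected subgraph of a path meeting both of its endpoints is the entire path.

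First I would establish connectivity. The full staircase $S:=M_{(k,k-1,\dots,1)}$ is a legitimate $h$-b-path, and by \Cref{prop:list_hom} it maps to every decreasing mountain whose first peak has height $k$: each such $\partial$ is a subsequence of $(k,\dots,1)$, and the strictly decreasing shape makes the intermediate-height condition automatic. Applying \Cref{prop:tensorPathProduct} coordinatewise, the tuple of these homomorphisms realizes $S$ as a walk inside $P$ running from $\hat b$ to $\hat e$; hence $\hat b,\hat e\in C_0$. Next I would run the height bookkeeping. Along any walk in $P$ all coordinates move up and down in lockstep, so all coordinate-heights of a vertex of $C_0$ agree; call the common value $h_P(x)$, so $h_P(\hat b)=0$ and $h_P(\hat e)=h$. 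Because $\phi$ preserves arc directions it preserves the forward-minus-backward count along walks, so $h_{M_\partial}\circ\phi-h_P$ is constant on $C_0$, equal to some $c$. Evaluating at $\hat b$ gives $c\ge 0$ and at $\hat e$ gives $h+c\le h$, forcing $c=0$; thus $\phi(\hat b)$ is the unique height-$0$ vertex $b$ and $\phi(\hat e)$ the unique height-$h$ vertex $e$.

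Finally, since $C_0$ is connected and homomorphisms send walks to walks, $\phi(C_0)$ is a connected subgraph of the path $M_\partial$ containing both $b$ and $e$; the only such subgraph is $M_\partial$ itself, so $\phi$ is already surjective on $C_0$, proving $P\rra M_\partial$. I expect the genuine obstacle to be the connectivity step: one must exhibit a single $h$-b-path mapping into \emph{all} members simultaneously (equivalently, check $\bigwedge_\delta M_\delta\neq\varnothing$), and here the decreasing hypothesis together with $\partial_1=k$ is exactly what guarantees the staircase works. The height argument and the closing path-topology remark are then routine; a secondary point to treat carefully is the passage from the two-factor \Cref{prop:tensorPathProduct} to an arbitrary number of factors, which is a direct induction.
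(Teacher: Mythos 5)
Your proof is correct, and its skeleton is the same as the paper's: exhibit an $h$-b-path inside the product that joins the all-beginnings vertex $\hat b$ to the all-ends vertex $\hat e$, then conclude that any homomorphism to $M_\partial$ must be surjective. The paper does this in two lines, citing \Cref{prop:tensorPathProduct} for the existence of such a path and (implicitly) \Cref{coro:sujectivityOfk-b-paths} for surjectivity. Where you genuinely add value is the connectivity step: \Cref{prop:tensorPathProduct}, as stated, describes the corner-connecting paths but does not by itself certify that this set is non-empty for the whole family, and the paper leaves that point implicit. Your staircase $S=M_{(k,k-1,\dots,1)}$, combined with \Cref{prop:list_hom} and the hypothesis $\partial_1=h-2$ shared by all members of $DM_{h,\ell}$, is an explicit common lower bound, and mapping it into the product places $\hat b$ and $\hat e$ in one component; this makes the step airtight and handles any number of factors at once, with no induction needed. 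Two minor remarks: the map $x\mapsto(\phi_\delta(x))_\delta$ you use is given by the universal property of the tensor product (\Cref{fig:tensor_product}), not by \Cref{prop:tensorPathProduct}, so the attribution should be adjusted; and your height bookkeeping plus the closing path-topology argument simply re-derives \Cref{coro:sujectivityOfk-b-paths} and the height-preservation \Cref{morphism preserves height}, so you could shorten the second half by citing them directly.
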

\begin{proof}
	As seen in \Cref{prop:tensorPathProduct}, $\prod\limits_{M_{\delta} \neq M_{\partial}}{M_{\delta}}$ contains an $h$-bounded path. Hence, any mapping of it to $ M_{\partial}$ must be surjective.
\end{proof}

\begin{proposition} \label{mountain_indep}
	For every member $M_{\partial}$ of $DM_{h,l}$ we have: $\prod\limits_{M_{\delta} \neq M_{\partial}}{M_{\delta}} \not \ra  M_{\partial}$
\end{proposition}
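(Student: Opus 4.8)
The plan is to produce a single witness. I claim it suffices to exhibit one mountain $M_\Omega$ (an $h$-b-path) with $M_\Omega \to M_\delta$ for every $M_\delta \in DM_{h,l}$ with $M_\delta \neq M_\partial$, but with $M_\Omega \not\to M_\partial$. Indeed, given such an $\Omega$, the fundamental property of the product yields $M_\Omega \to \prod\limits_{M_\delta \neq M_\partial} M_\delta$; if there were a homomorphism $\prod\limits_{M_\delta \neq M_\partial} M_\delta \to M_\partial$, transitivity would give $M_\Omega \to M_\partial$, contradicting the choice of $\Omega$. So the whole task reduces to building $\Omega$, and by \Cref{prop:list_hom} the two requirements become purely combinatorial: $\Omega$ must be homomorphic, in the sense of \Cref{def:list_hom}, to every decreasing sequence $\delta \neq \partial$ with $\delta_1 = h-2$, but not to $\partial$ itself.

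Writing $\partial = (a_1,\dots,a_\ell)$ with $a_1 = h-2 > a_2 > \cdots > a_\ell$, I would take $\Omega$ to be the concatenation
\[
\Omega \;=\; \bigodot_{i=1}^{\ell-1} D_i, \qquad D_i = (h-2,h-3,\dots,1)\ \text{with the single entry } a_{i+1}\ \text{deleted}.
\]
This is a legitimate (non-monotone) mountain sequence, since all entries lie in $[1,h-2]$; it is the sequence drawn in \Cref{fig_omega_mountain}. Recall that ``$\Omega$ homomorphic to $\delta$'' means exactly that $\delta$ appears as a subsequence of $\Omega$ in which every entry of $\Omega$ lying strictly between two consecutively selected peaks is bounded above by the larger of the two (the analogous bound before the first selected peak being automatic, as everything is $\leq h-2 = \delta_1$).

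The heart of the verification is a single structural observation about segment crossings. Once the unique top value $\delta_1 = h-2$ has been selected, the remaining selected values $\delta_2,\dots,\delta_\ell$ are all strictly below $h-2$; since each segment boundary of $\Omega$ carries the value $h-2$, the betweenness bound of \Cref{def:list_hom} forbids the selection from jumping across a segment boundary after a value below $h-2$ has been chosen. Hence $\delta_2,\dots,\delta_\ell$ must all be selected inside one and the same segment $D_i$, which is possible exactly when $\{\delta_2,\dots,\delta_\ell\} \subseteq \{1,\dots,h-2\}\setminus\{a_{i+1}\}$, i.e. when $a_{i+1} \notin \{\delta_2,\dots,\delta_\ell\}$. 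Therefore $M_\Omega \to M_\delta$ if and only if some $a_{i+1}$ is absent from $\{\delta_2,\dots,\delta_\ell\}$. As decreasing sequences are determined by their value sets and $\delta_1=\partial_1$, we have $\delta \neq \partial \iff \{\delta_2,\dots,\delta_\ell\} \neq \{a_2,\dots,a_\ell\}$, and since both sets have size $\ell-1$ this is equivalent to some $a_{i+1}$ being missing. Thus $M_\Omega \to M_\delta$ for every $\delta \neq \partial$, while for $\partial$ itself each $a_{i+1}$ $(1\le i \le \ell-1)$ lies in $\{a_2,\dots,a_\ell\}$, so no segment accommodates $a_2,\dots,a_\ell$ and $M_\Omega \not\to M_\partial$, as required.

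The main obstacle is precisely this segment-crossing analysis, combined with certifying that $\Omega$ excludes \emph{only} $\partial$ and nothing more. The latter is genuinely delicate for $\ell \geq 3$: a purely decreasing witness $\Omega$ cannot succeed, because every value of $\{1,\dots,h-2\}$ occurs in the value set of some $\delta \neq \partial$, which would force a decreasing witness to contain the value set of $\partial$ as well. It is exactly the non-monotone, multi-segment shape of $\Omega$ — its repeated returns to height $h-2$ acting as barriers — that lets it separate $\partial$ from all of its neighbours at once. I expect the careful bookkeeping to reside in justifying the betweenness inequalities of \Cref{def:list_hom} both within a segment and across consecutive segments.
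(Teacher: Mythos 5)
Your proof is correct, and it shares the paper's overall strategy: exhibit a single witness mountain $M_\Omega$ with $M_\Omega \to M_\delta$ for every $\delta \neq \partial$ but $M_\Omega \not\to M_\partial$, then conclude via the universal property of the product and \Cref{prop:list_hom} (you make this reduction explicit, whereas the paper leaves it implicit). Where you genuinely diverge is in the witness and its verification. The paper's witness is the shrinking concatenation $\partial_1 \cdot ([\partial_1-1]\setminus\set{\partial_2}) \cdot \partial_2 \cdots \partial_{\ell-1} \cdot ([\partial_{\ell-1}-1]\setminus\set{\partial_\ell})$, whose $i$-th block starts at $\partial_i$; accordingly, the paper proves $M_\Omega \not\to M_\partial$ by showing $\partial$ is not even a subsequence of $\Omega$ (pushing each $\partial_j$ into ever later blocks until $\partial_\ell$ has no room), and proves $M_\Omega \to M_\delta$ by decomposing $\delta$ around the first index $t$ with $\partial_t \notin \delta$ and embedding the pieces block by block. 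Your blocks instead all restart at the maximum value $h-2$, and this buys a cleaner, unified verification: every block boundary is a barrier at height $h-2$, so the betweenness condition of \Cref{def:list_hom} forces $\delta_2,\dots,\delta_\ell$ into a single block $D_i$, yielding the exact characterization that $M_\Omega \to M_\delta$ if and only if some $a_{i+1}$ is absent from the value set of $\delta$; both directions of the proposition then drop out of the single observation that two distinct $(\ell-1)$-element subsets of $\set{1,\dots,h-3}$ cannot contain one another. The costs are negligible (your $\Omega$ is somewhat longer than the paper's, which is irrelevant to the statement), and your within-block betweenness checks, though only sketched, are immediate since each block is strictly decreasing. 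One small inaccuracy: your aside that your $\Omega$ is the sequence drawn in \Cref{fig_omega_mountain} is off --- that figure depicts the paper's shrinking witness, whose later blocks do not return to height $h-2$.
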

\begin{proof}
	Fix $\partial$ a decreasing sequence starting at $\partial_1 = h-2$.
	We shall construct a sequence (not necessarily decreasing) $\Omega$ such that $M_\Omega \not \ra M_\partial$ but for any $\delta \neq \partial$, $M_\Omega \ra M_\delta$.
	
	Let $[m]\setminus \set{k}$ denote the (decreasing) sequence $m,m-1,\dots,k+1,k-1,\dots,1$.
	We define $\Omega$ as concatenation of many sequences as follows:
	$$\Omega = \partial_1 \cdot \left ([\partial_1-1]\setminus \set{\partial_2}\right ) \cdot \partial_2 \cdot  \left ([\partial_2-1]\setminus \set{\partial_3}\right )  \ldots  \partial_{\ell - 1}  \cdot \left ([\partial_{\ell-1}-1]\setminus \set{\partial_l}\right ) .$$
	
	First, we show that $\Omega$ doesn't contain $\partial$ as a subsequence. This which would imply by \Cref{prop:list_hom} that $M_\Omega \not \ra M_\partial$.
	Toward a contradiction, assume that $\partial$ is a subsequence of $\Omega$, then $\partial_2$ must appear in the subsequence, it cannot appear inside $[\partial_1-1]\setminus \set{\partial_2}$, therefore, it must appear inside the part: $\partial_2 \cdot  \left ([\partial_2-1]\setminus \set{\partial_3}\right )  \ldots  \partial_{\ell - 1}  \cdot \left ([\partial_{\ell-1}-1]\setminus \set{\partial_l}\right )$.
	As $\partial_3$ must appear after $\partial_2$, and cannot appear in $[\partial_2-1]\setminus \set{\partial_3}$, it must appear in $\partial_3 \cdot  \left ([\partial_3-1]\setminus \set{\partial_4}\right )  \ldots  \partial_{\ell - 1}  \cdot \left ([\partial_{\ell-1}-1]\setminus \set{\partial_l}\right )$. Continuing this process, there is no place for $\partial_\ell$ to appear.
	\vspace{5pt}
	
	Finally, we show that given a $\delta \neq \partial$, we have $M_\Omega \ra M_\delta$.
	Since $\delta \neq \partial$, there exists a first index $t$ such that $\partial_t \notin \delta$, and we can decompose $\delta$ in the following way: 
	$$\delta = \partial_1\cdot\delta|_{]\partial_2,\partial_1[}\cdot \partial_2\cdot\delta|_{]\partial_3,\partial_2[} \dots \partial_{t-1} \cdot \delta|_{]0,\partial_{t-1}[}.$$
	Where $\delta|_{]i,j[}$ is the (possibly empty) subsequence of $\delta$ of the values in $[i,j]$.
	This decomposition corresponds to a natural subsequence of $\Omega$, as $\delta|_{]\partial_i,\partial_{i+1}[} $ is a subsequence of $[\partial_i] \setminus \set{\partial_{i+1}}$, and $\delta|_{]0,\partial_{t-1}[}$ is a subsequence of $[\partial_{t-1}] \setminus \set{\partial_{t}}$.
	
	With this presentation of $\delta$ as a subsequence of $\Omega$, it remains to verify the last two conditions of $\Cref{def:list_hom}$, in order to verify that $\Omega$ is homomorphic to $\delta$ that is to say, $M_\Omega \ra M_\delta$.
	The second condition of the definition is trivial because $\delta_1 = \Omega_1 = \partial_1 = h+2$.
	By the nature of our choices, the subsequence of $\Omega$ in between $\delta_i$ and $\delta_{i+1}$ is bounded by $\delta_i$
\end{proof}

\begin{theorem}\label{thm:M-partial}
	For any $h>l+2$, $\prod\limits_{M_\partial \in DM_{h,\ell}} \cone M_\partial$ is a core.
\end{theorem}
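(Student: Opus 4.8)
The plan is to verify that the family $DM_{h,\ell}$ of decreasing mountains satisfies the three hypotheses of \Cref{thm:VSCmultiple}, namely that its members are oriented incomparable cores, that condition (a) holds, and that condition (b) holds, and then simply invoke that theorem. First I would observe that each decreasing mountain is a $(k+2)$-b-path (with $k = h-2$), hence an oriented graph, and by \Cref{coro:K-b-pathsAreCores} it is a core. Incomparability of distinct members follows from \Cref{prop:list_hom}: two distinct strictly decreasing sequences $\partial \neq \delta$ both starting at $h-2$ cannot be homomorphic to one another (if $\partial$ were homomorphic to $\delta$ then $\delta$ would be a proper subsequence of the strictly decreasing $\partial$ with a matching max-height condition, and symmetry of the argument rules out a homomorphism in either direction when $\partial \neq \delta$). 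So the family consists of oriented incomparable cores.

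Next I would assemble condition (a) from the two propositions just proved: \Cref{mountain_indep} gives exactly $\prod_{M_\delta \neq M_\partial} M_\delta \not\ra M_\partial$ (the non-homomorphism part), and the first proposition of this section gives $\prod_{M_\delta \neq M_\partial} M_\delta \ra \cone M_\partial$ (since each $M_\delta$, being an oriented path, maps to a digon, and a digon maps into $\cone M_\partial$). Condition (b), the orthogonality requirement $\prod_{M_\delta \in DM_{h,\ell}} M_\delta \rra M_\partial$, is precisely the content of the second proposition of this section. Thus all three hypotheses of \Cref{thm:VSCmultiple} are met, and the conclusion that $\prod_{M_\partial \in DM_{h,\ell}} \cone M_\partial$ is a core follows immediately.

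The role of the hypothesis $h > \ell + 2$ is to guarantee that $DM_{h,\ell}$ is nonempty and, more importantly, contains at least two distinct mountains so that the family is not a single core (for which the statement would be vacuous or degenerate); I would check that with $\partial_1 = h-2$ fixed and $\ell$ strictly decreasing positive values below it, there is enough room precisely when $h-2 > \ell$, i.e. $h > \ell + 2$. I expect the main obstacle to be a clean citation structure rather than any genuine mathematical difficulty: the real work has been front-loaded into \Cref{thm:VSCmultiple} and into the three preparatory propositions, so the proof of \Cref{thm:M-partial} is essentially a one-line assembly, \emph{provided} one has correctly matched each clause of conditions (a) and (b) to the corresponding proposition and confirmed incomparability. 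The only subtlety worth spelling out is the incomparability check, since \Cref{thm:VSCmultiple} explicitly requires the cores to be pairwise incomparable, and this does not follow from the three section propositions alone but must be extracted from \Cref{prop:list_hom} applied to strictly decreasing sequences.
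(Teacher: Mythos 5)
Your proof is correct and takes essentially the same route as the paper: the paper also obtains \Cref{thm:M-partial} by verifying the hypotheses of \Cref{thm:VSCmultiple} through the three propositions of \Cref{sec:Decreasing_mountains} (the two unnamed ones supplying condition (a)'s second half and condition (b), and \Cref{mountain_indep} supplying condition (a)'s first half). The only difference is that the paper leaves the core-ness and pairwise-incomparability checks implicit, which you rightly make explicit via \Cref{coro:K-b-pathsAreCores} and \Cref{prop:list_hom}.
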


There is a one-to-one correspondance between elements of $DM_{h,\ell}$ and $\ell$-subsets of $[h-2]$. In other words, $|DM_{h,\ell}|=\binom{h-2}{\ell}$. Taking $\ell = k$ and $h=2k+2$, we have a set of $\binom{2k}{k}$ digraphs, the largest of which has $3k^2+3k+2$ vertices and the smallest of which has $k^2+5k+2$ vertices. Therefore:
$$ \left |\core{ \textstyle \prod\limits_{M_\partial \in DM_{2k+2,k}} \cone M_\partial} \right |  \geq (k^2+5k+2)^{\binom{2k}{k}}$$

\begin{proposition}
	There exists a family of digraphs, each of order at most $n$, whose product has a core of order $\Omega\left (n^{\frac{2^{^{\sqrt{n}}}}{\sqrt{n}}}\right )$
\end{proposition}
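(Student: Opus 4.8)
The plan is to specialise \Cref{thm:M-partial} and then turn the resulting order bound into the claimed function of $n$ by a direct asymptotic estimate; all the structural work has already been done, so what remains is bookkeeping on the exponent. First I take $\ell=k$ and $h=2k+2$; the hypothesis $h>\ell+2$ of \Cref{thm:M-partial} holds for every $k\geq 1$, so the product $P:=\prod_{M_\partial\in DM_{2k+2,k}}\cone M_\partial$ is a core. Being a core, $P$ equals its own core, whence $|\core P|=|P|=\prod_{M_\partial}|\cone M_\partial|$. This is a product of $|DM_{2k+2,k}|=\binom{2k}{k}$ factors, each of order at least $k^2+5k+2$, which yields the displayed inequality $|\core P|\geq (k^2+5k+2)^{\binom{2k}{k}}$. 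Since every factor $\cone M_\partial$ has order at most $3k^2+3k+2$, setting $n:=3k^2+3k+2$ makes each digraph of the family have order at most $n$ and gives $k=\Theta(\sqrt n)$.

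It then remains to rewrite $(k^2+5k+2)^{\binom{2k}{k}}$ as a power of $n$. For the base I would use $k^2+5k+2\geq k^2\geq n/4$ (valid once $k\geq 4$, i.e.\ $4k^2\geq n$), together with $\log_n(n/4)\geq \tfrac12$ for $n\geq 16$, to obtain $(k^2+5k+2)^{\binom{2k}{k}}\geq (n/4)^{\binom{2k}{k}}\geq n^{\binom{2k}{k}/2}$. For the exponent I would estimate $\binom{2k}{k}$ from below by $4^k/(2k+1)$, the central coefficient being the largest of the $2k+1$ terms summing to $4^k$; Stirling gives the same up to a $\sqrt k$ factor and is available if sharper constants are wanted.

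The crux is that $4^k$ exceeds $2^{\sqrt n}$ by a genuine exponential margin. Indeed, $n=3k^2+3k+2<4k^2=(2k)^2$ for $k\geq 4$, so $2k>\sqrt n$; more precisely $2k\sim \tfrac{2}{\sqrt3}\sqrt n=(1+\varepsilon)\sqrt n$ with a fixed $\varepsilon=\tfrac{2}{\sqrt3}-1>0$. Hence $4^k=2^{2k}\geq 2^{(1+\varepsilon)\sqrt n}$ and
\[
\frac{\binom{2k}{k}}{2}\ \geq\ \frac{2^{(1+\varepsilon)\sqrt n}}{2(2k+1)}\ =\ \frac{2^{\sqrt n}}{\sqrt n}\cdot\frac{2^{\varepsilon\sqrt n}\,\sqrt n}{2(2k+1)}\ \geq\ \frac{2^{\sqrt n}}{\sqrt n}
\]
for all large $n$, because $2^{\varepsilon\sqrt n}\sqrt n/(2(2k+1))\to\infty$ as $2k+1=\Theta(\sqrt n)$. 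Chaining the inequalities gives $|\core P|\geq n^{\binom{2k}{k}/2}\geq n^{2^{\sqrt n}/\sqrt n}$, which is the claimed $\Omega\!\big(n^{2^{\sqrt n}/\sqrt n}\big)$. For values of $n$ not of the form $3k^2+3k+2$ I would take the largest $k$ with $3k^2+3k+2\leq n$; this perturbs $k$ by only a bounded factor, so $k=\Theta(\sqrt n)$ and the whole estimate survive.

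The only genuine obstacle is this exponent bookkeeping: one must verify that the constant-factor gap between $(2k)^2=4k^2$ and $n\approx 3k^2$ — which forces $2k\geq(1+\varepsilon)\sqrt n$ — is exactly what absorbs both the $2k+1$ denominator from the binomial bound and the $n/4$ loss in the base, so that the resulting exponent reaches the full $2^{\sqrt n}/\sqrt n$ rather than a polynomially discounted version of it. Everything else is an immediate consequence of \Cref{thm:M-partial} and the order count $|DM_{2k+2,k}|=\binom{2k}{k}$ established above.
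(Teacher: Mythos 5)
Your proposal is correct and follows essentially the same route as the paper: specialize \Cref{thm:M-partial} to the family $DM_{2k+2,k}$, use the fact that the product, being a core, has core order equal to $\prod|\cone M_\partial| \geq (k^2+5k+2)^{\binom{2k}{k}}$, and then convert to the stated form via $k=\Theta(\sqrt{n})$ --- asymptotic bookkeeping the paper leaves implicit and you carry out explicitly. One small nitpick: since $n=3k^2+3k+2>3k^2$ gives $2k<\tfrac{2}{\sqrt{3}}\sqrt{n}$ strictly, you cannot take $\varepsilon=\tfrac{2}{\sqrt{3}}-1$ exactly; fix any $\varepsilon$ strictly below that value, and your final limit argument goes through unchanged.
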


\section{From digraphs to graphs}\label{sec:Digraph-Graph}

In this final section we show that the family of digraphs we built whose product was a core, can be transformed into a family of graphs having the same property.
The idea is to replace every oriented edge with a non-oriented gadget where the structure of the gadget captures the orientation. The gadget we use is the graph $K_2\Bowtie C_5$. Given a digraph $D$ we build a graph $G[D]$ by replacing each edge with a (vertex disjoint) copy of $K_2\Bowtie C_5$ where the head is identified with a vertex from the $K_2$ part and the tail is replaced with a vertex from $C_5$-part. See \Cref{fig:wheel} for an example. We show that the following holds.

\begin{figure}[htb]
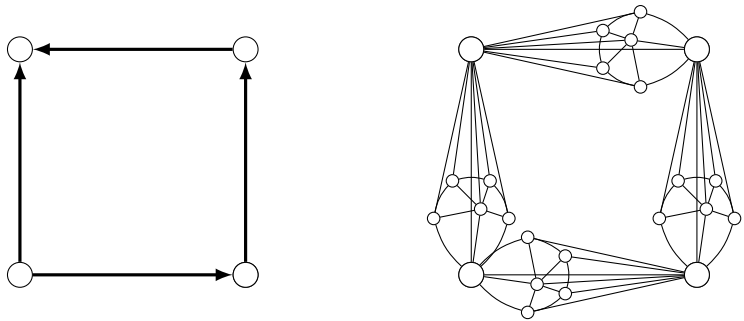

	\centering
	\scalebox{1}{\tikzfig{tikz_fig/wheel}}
		\caption{An example of $G[D]$}
		\label{fig:wheel}		
\end{figure}

\begin{theorem}\label{thm:D1*D2}
	Given two digraphs $D_1$ and $D_2$, if the underlying graph of $D_2$ is 4-colorable, then $D_1\ra D_2$ if and only if $G[D_1] \ra G[D_2]$. Furthermore, restriction of any homomorphism $\psi: G[D_1] \ra G[D_2]$ to the vertices of $D_1$ is a homomorphism of $D_1$ to $D_2$.
\end{theorem}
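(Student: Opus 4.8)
The plan is to prove the biconditional in its two directions and to obtain the ``furthermore'' as a byproduct of the harder direction.

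\emph{Forward direction (functoriality).} Given a homomorphism $\varphi\colon D_1\to D_2$, I would build $G[\varphi]\colon G[D_1]\to G[D_2]$ gadget by gadget: a vertex $x\in V(D_1)$ is sent to $\varphi(x)$, and the private interior of the gadget attached to an arc $u\tikzRarrow v$ is sent, vertex for vertex, to the interior of the gadget attached to $\varphi(u)\tikzRarrow\varphi(v)$ (which exists since $\varphi$ preserves arcs). The only checks are well-definedness where gadgets meet — which holds because $\varphi$ preserves the head/tail distinction, so each shared junction vertex receives a single image — and edge preservation, immediate since each gadget maps isomorphically onto the corresponding gadget. This direction uses nothing about $4$-colorability.

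\emph{Structure of $G[D_2]$ and rigidity of gadget images.} First I would record that every edge of $G[D_2]$ lies inside a gadget and every interior vertex has all its neighbours inside its own gadget, and that $\omega(G[D_2])=4$: a clique meeting a gadget interior is confined to that gadget, where $\omega(K_2\Bowtie C_5)=4$, while a clique of junction vertices is a clique of the underlying graph $U$ of $D_2$, which is $K_5$-free because it is $4$-colorable. Now fix $\psi\colon G[D_1]\to G[D_2]$ and a gadget $\Gamma$ (for an arc $u\tikzRarrow v$, with $K_2$-part $\{v,b\}$ and $C_5$-part $C\ni u$). Since $\psi$ is injective on cliques, $\psi(v)\neq\psi(b)$ and both are adjacent to every vertex of $\psi(C)$. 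The key point is that $\psi$ cannot collapse $C$ onto a triangle: such a triangle, together with $\psi(v)$ and $\psi(b)$, would form a $K_5$, contradicting $\omega=4$. Hence $\psi(C)\cong C_5$, and $\{\psi(v),\psi(b)\}$ together with $\psi(C)$ span a copy $J\cong K_2\Bowtie C_5$ in $G[D_2]$, so $\psi|_\Gamma$ is an isomorphism onto $J$.

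\emph{The crux: every copy of $K_2\Bowtie C_5$ in $G[D_2]$ is a gadget.} In such a copy $J$ the two universal vertices $\{p,q\}$ and the rim $5$-cycle $C^J$ are canonically determined. An interior $C_5$-vertex has degree $4<6$, so neither $p$ nor $q$ is of this type; thus each of $p,q$ is either a junction vertex or an interior $K_2$-vertex. Here $4$-colorability does the real work: if both $p,q$ were junction vertices, then the common neighbourhood of the adjacent pair $p,q$ splits, with no edges across, into a small interior piece (the at most three interior common neighbours of the single gadget containing both, which carries no $5$-cycle) and the common neighbourhood of $p,q$ taken in $U$; being connected, $C^J$ must then lie entirely in $U$, producing a copy of $K_2\Bowtie C_5$ inside $U$, impossible since $\chi(K_2\Bowtie C_5)=5$ while $U$ is $4$-colorable. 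Therefore some universal vertex is an interior $K_2$-vertex $b_e$ of a gadget $\Gamma_e$; since all neighbours of $b_e$ lie in $\Gamma_e$, both $\{p,q\}$ and $C^J$ lie in $\Gamma_e$, and a short neighbourhood computation forces $C^J$ to be exactly the $C_5$-part and $\{p,q\}$ the $K_2$-part of $\Gamma_e$, i.e.\ $J=\Gamma_e$.

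\emph{Conclusion and main obstacle.} Combining the last two steps, $\psi$ sends each gadget of $G[D_1]$ isomorphically onto a gadget $\Gamma_{w\tikzRarrow w'}$ of $G[D_2]$ respecting the $K_2$/$C_5$ partition; reading off the junction vertices (head $v$ to head $w'$, tail $u$ to tail $w$) shows that for every arc $u\tikzRarrow v$ of $D_1$ the pair $\psi(u)\tikzRarrow\psi(v)$ is an arc of $D_2$, so $\psi|_{V(D_1)}$ is a homomorphism $D_1\to D_2$, which in particular forces $D_1\to D_2$. One delicate point is that the two $K_2$-vertices of a gadget are interchangeable by an automorphism, so to pin the image of a junction vertex onto a junction vertex (rather than the interior $K_2$-vertex) one uses that the companion $K_2$-vertex is interior, or one first composes $\psi$ with gadget automorphisms sending junctions to junctions. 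I expect the third step to be the main obstacle, and it is precisely where $4$-colorability is indispensable: it is exactly what prevents a dominated $5$-cycle — a copy of $K_2\Bowtie C_5$ — from occurring among the junction vertices and thereby creating a spurious gadget-like image not arising from any single arc.
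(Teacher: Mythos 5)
Your proof is correct, and it establishes the paper's central claim --- every homomorphism $\psi\colon G[D_1]\to G[D_2]$ sends each gadget isomorphically onto a gadget, respecting the $K_2$/$C_5$ parts --- by a genuinely different mechanism. The paper classifies the three homomorphic images of $K_2\Bowtie C_5$ (itself, $K_6-E(P_3)$, and $K_5$), uses $4$-colorability once to force the image of a gadget to contain an interior vertex $x$ of some gadget $H_2$ of $G[D_2]$, and then disposes of the collapsed images and confines the surviving copy to $H_2$ in a single stroke: those images are $3$-connected, while the at most two junction vertices of $H_2$ separate $x$ from everything outside $H_2$. You replace this connectivity/separator argument by two facts: the global invariant $\omega(G[D_2])=4$ (cliques through an interior vertex are trapped in a gadget; junction cliques are cliques of the $K_5$-free graph $U$), which forbids any collapse of a rim; and the standalone structural lemma that every copy of $K_2\Bowtie C_5$ in $G[D_2]$ is a gadget, where $4$-colorability enters a second time through $\chi(K_2\Bowtie C_5)=5>\chi(U)$. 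The paper's route is more compact; yours isolates a homomorphism-free description of all copies of the gadget inside $G[D_2]$, which is the more reusable statement, at the price of the common-neighbourhood analysis --- where, as a nitpick, if $D_2$ has a digon on $pq$ there are \emph{two} gadgets containing both $p$ and $q$, so the ``interior piece'' is a disjoint union of two $3$-vertex paths rather than one; since these pieces still contain no $5$-cycle and have no edges to the $U$-part, the argument is unaffected. Finally, the ``delicate point'' you flag is real but is equally present in the paper, whose proof also stops at ``parts map to parts'': if every gadget incident to a junction vertex $v$ of $D_1$ maps onto one and the same gadget of $G[D_2]$ (e.g.\ $v$ a degree-one head), then $\psi(v)$ can be the interior $K_2$-vertex, so the literal restriction $\psi|_{V(D_1)}$ need not even take values in $V(D_2)$; the clean repair, available to both proofs, is to read the homomorphism $D_1\to D_2$ off the induced arc-to-arc correspondence, which is consistent at shared junction vertices precisely because two distinct gadgets of $G[D_2]$ intersect only in junction vertices.
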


\begin{proof}
	We first observe that there are three possible homomorphic images of $K_2\Bowtie C_5$: the graph itself, one obtained by identifying two nonadjacent vertices of $C_5$ (which is isomorphic to $K_6-E(P_3)$), and $K_5$. 
	
	Let $\phi$ be a homomorphism of $G[D_1]$ to $G[D_2]$. For an arc $a_1= u_1 \tikzRarrow v_1$ of $D_1$ we claim that the copy of the gadget corresponding to $a_1$ in $G[D_1]$ is mapped to a copy built on an arc $a_2=u_2 \tikzRarrow v_2$ of $D_2$. This would complete the proof of the theorem because such a mapping must map $K_2$ and $C_5$ to the corresponding parts. 
	
	To prove the claim of the theorem, let $H_1$ be the copy of the gadget built on the arc $a_1$. As the underlying graph of $D_2$ is 4-colorable, $\phi(H_1)$ must contain a vertex $x$ not in $V(D_2)$. Suppose this vertex is in the copy $H_2$ of the gadget corresponding to an arc $a_2$ of $D_2$. As $H_2$ contains no $K_5$, if $\phi(H_1)$ is a $K_5$ or $K_6-E(P_3)$, then at least one of its vertices, say $y$, is not in $H_2$. Then the vertices of $\phi(V(H_1))\cap \{u_2, v_2\}$ (a subset of vertices of the arc $a_2$) separate the two vertices, $x$ and $y$ of $\phi(H_1)$, but both $K_5$ and $(K_6-E(P_3)$ are 3-connected. Thus $\phi(H_1)$ is isomorphic to $K_2\Bowtie C_5$, and can only be the copy built on arc $a_2$. Furthermore, the $K_2$ part of $H_1$ must maps to the $K_2$ part of $H_2$ which proves the second part of the claim.	
\end{proof}

As the diagonal entries of $G\times G$ induce an isomorphic copy of $G$, we have the following observation.

\begin{observation}
	Given digraphs $D_1$ and $D_2$ the graph $G[D_1\times D_2]$ is a subgraph of $G[D_1]\times G[D_2]$.
\end{observation}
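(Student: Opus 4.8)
The plan is to build an explicit injective homomorphism $\Phi$ from $G[D_1\times D_2]$ into $G[D_1]\times G[D_2]$, assembled gadget by gadget from the diagonal of a self-product, as the preceding remark suggests. Write $W=K_2\Bowtie C_5$ and fix its distinguished \emph{head} vertex $\hat h$ (in the $K_2$ part) and \emph{tail} vertex $\hat t$ (in the $C_5$ part). For an arc $a=u\tikzRarrow v$ of a digraph $D$, let $W_a$ be the copy of $W$ that $G[D]$ places on $a$, so that $\hat h$ is identified with $v$, $\hat t$ with $u$, and the other five vertices of $W_a$ are private to $a$. Two such copies meet only in vertices of $D$, and every edge of $G[D]$ lies inside a single copy.

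First I would record two ingredients. (i) Each $W_a$ is an \emph{induced} copy of $W$ in $G[D]$: a private vertex lies in no other gadget, so the only pair of vertices of $W_a$ that another gadget could join is $\set{u,v}$; but $\hat h$ and $\hat t$ are already adjacent in $W$ (the $K_2$ and $C_5$ parts are completely joined), so $uv$ is already an edge of $W_a$ and nothing new is added. (ii) For any (symmetric) graph $W$, the map $w\mapsto(w,w)$ embeds $W$ as an induced subgraph of $W\times W$, since $ww'\in E(W)$ if and only if $(w,w)(w',w')\in E(W\times W)$. Combining the two, for an arc $a_1$ of $D_1$ and an arc $a_2$ of $D_2$, fixing the slot-respecting isomorphisms $W\cong W_{a_1}$ and $W\cong W_{a_2}$ and taking the resulting diagonal yields an induced copy of $W$ inside $W_{a_1}\times W_{a_2}$, itself an induced subgraph of $G[D_1]\times G[D_2]$.

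Next I would define $\Phi$ using the bijection between arcs of $D_1\times D_2$ and pairs of arcs $(a_1,a_2)$, one from each factor: the arc $\hat a=(u_1,u_2)\tikzRarrow(v_1,v_2)$ corresponds to $a_1=u_1\tikzRarrow v_1$ and $a_2=u_2\tikzRarrow v_2$. Send each vertex $(u_1,u_2)$ of $D_1\times D_2$ to the vertex $(u_1,u_2)$ of $G[D_1]\times G[D_2]$, and send the gadget $W_{\hat a}$ onto the diagonal copy of $W$ inside $W_{a_1}\times W_{a_2}$, respecting slots. On the head and tail this diagonal map returns $(v_1,v_2)$ and $(u_1,u_2)$, so it agrees with the vertex rule; hence $\Phi$ is well defined even though the original vertices are shared by several gadgets.

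It then remains to check that $\Phi$ is an injective homomorphism, which is exactly the assertion that $G[D_1\times D_2]$ is a subgraph of $G[D_1]\times G[D_2]$. Edge preservation is immediate: every edge sits in a single $W_{\hat a}$, on which $\Phi$ is an embedding into $W_{a_1}\times W_{a_2}\subseteq G[D_1]\times G[D_2]$. For injectivity, the original vertices go to distinct targets by construction, while a private vertex of $W_{\hat a}$ maps to a pair both of whose coordinates are private vertices of $W_{a_1}$ and $W_{a_2}$; two private vertices coming from distinct arcs of $D_1\times D_2$ differ, by the bijection $\hat a\leftrightarrow(a_1,a_2)$, in at least one factor, and the diagonal map is injective within a single gadget. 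I expect the only delicate part to be precisely this bookkeeping --- checking that the per-arc diagonal embeddings, all pinned to send head to head and tail to tail, glue into one globally injective map --- rather than any real conceptual obstacle.
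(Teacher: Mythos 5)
Your proof is correct and takes essentially the same route as the paper, whose entire justification is the one-line remark that the diagonal entries of $W\times W$ induce an isomorphic copy of $W$, applied gadget by gadget with heads and tails pinned to the original vertices. You have simply made explicit the well-definedness and injectivity bookkeeping that the paper leaves implicit.
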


With a repeated application of this observation, we concluded that for any (finite) family $\mathcal{D}$ of digraphs the graph $ G[\footnotesize  \prod\limits_{D \in \mathcal{D}} D]$ is a subgraph of $ {\prod\limits_{D \in \mathcal{D}} G[D]}$. As a final step we show that under certain conditions, the core of $ {\prod\limits_{D \in \mathcal{D}} G[D]}$ is at least as large as the core of  $ G[\footnotesize  \prod\limits_{D \in \mathcal{D}} D]$.

\begin{theorem}
If $\mathcal{D}$ is a (finite) family of digraphs each of whose underlying graph is 4-colorable, then $$ \left| G\left [\core{\textstyle\prod\limits_{D \in \mathcal{D}} D} \right ] \right| \leq \left|\core{\textstyle\prod\limits_{D \in \mathcal{D}} G[D]}\right|.$$	
\end{theorem}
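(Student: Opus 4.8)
We want to show that the core of the graph-product $\prod_{D \in \mathcal{D}} G[D]$ is at least as large (in number of vertices) as the core of the gadget-expansion of the digraph-product, $G[\core{\prod_{D \in \mathcal{D}} D}]$. The strategy is to exhibit a homomorphism between these two objects in the right direction, combined with a surjectivity (or order) argument, so that the target cannot have a strictly smaller core. Concretely, write $P = \prod_{D \in \mathcal{D}} D$ for the digraph-product. The preceding observation tells us that $G[P]$ is a subgraph of $\prod_{D \in \mathcal{D}} G[D]$, hence $G[P] \to \prod_{D \in \mathcal{D}} G[D]$. Since $P \simeq \core P$, and since the whole point of Theorem~\ref{thm:D1*D2} is that the $G[\cdot]$ construction is functorial on digraphs with $4$-colorable underlying graph, I expect $G[P] \simeq G[\core P]$, so that $|\core{G[P]}| = |\core{G[\core P]}|$.

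\textbf{Key steps.} First I would verify that $G[\cdot]$ respects homomorphic equivalence on the relevant class: because each $D \in \mathcal{D}$ has $4$-colorable underlying graph, so does $P$ (colorings pull back along the projections, or one uses that $4$-colorability is preserved — indeed $G[P] \to \prod G[D] \to$ a suitable target forces it), and thus Theorem~\ref{thm:D1*D2} applies to the pair $(P, \core P)$ in both directions. From $P \to \core P$ and $\core P \to P$ we get $G[P] \to G[\core P]$ and $G[\core P] \to G[P]$, i.e.\ $G[P] \simeq G[\core P]$, whence $\core{G[P]} \cong \core{G[\core P]}$ and in particular $|G[\core P]| \geq |\core{G[\core P]}| = |\core{G[P]}|$ — but I actually want the reverse comparison, so the real work is to bound $|\core{G[P]}|$ from below by $|G[\core P]|$. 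The crucial claim is therefore $|\core{G[P]}| \geq |G[\core P]|$, equivalently that $G[\core P]$ is itself essentially a core (or maps only surjectively into any equivalent graph). Since $\core P$ is a core and each of its arcs is replaced by a rigid $3$-connected gadget $K_2 \Bowtie C_5$, I expect $G[\core P]$ to be a core by the same rigidity analysis as in the proof of Theorem~\ref{thm:D1*D2}: any endomorphism of $G[\core P]$ must send each gadget isomorphically to a gadget (the $4$-colorability of the underlying graph of $\core P$ is what forbids collapsing a gadget to a smaller image), inducing an endomorphism of $\core P$, which is the identity because $\core P$ is a core; one then checks the gadget-internal vertices are also fixed.

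\textbf{Assembling the inequality.} Combining these, $|\core{G[P]}| = |\core{G[\core P]}| = |G[\core P]|$, the last equality because $G[\core P]$ is a core. Then, using $G[P] \to \prod_{D \in \mathcal{D}} G[D]$ from the subgraph observation, together with the functorial map $\prod G[D] \to G[\prod D] = G[P]$ that one obtains by applying the ``restriction to vertices of $D$'' part of Theorem~\ref{thm:D1*D2} coordinatewise, we would get $G[P] \simeq \prod_{D} G[D]$, so $|\core{\prod_D G[D]}| = |\core{G[P]}| = |G[\core P]|$, which is exactly the desired equality (and in particular the claimed inequality).

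\textbf{The main obstacle.} The delicate point is the claim that $\prod_{D} G[D] \to G[P]$, i.e.\ that the gadget-expansion commutes with products up to homomorphic equivalence and not merely that $G[P]$ embeds as a subgraph. The subgraph observation gives only one direction; for the other direction I must show that a homomorphism of $\prod_D G[D]$ factors through $G$ of the digraph-product. This is where Theorem~\ref{thm:D1*D2}'s second conclusion is essential: each projection $\prod_D G[D] \to G[D]$ restricts, on the digraph vertices, to a digraph-homomorphism into $D$, and the $4$-colorability hypothesis guarantees every gadget in $\prod_D G[D]$ maps rigidly so that these restrictions assemble into a map into $P = \prod_D D$. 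Making this assembly precise — in particular tracking that the internal $C_5$- and $K_2$-vertices of the product gadgets behave coherently under all coordinate projections simultaneously — is the heart of the argument and the step most likely to require care; if it resists a direct factorization, I would instead prove the inequality more directly by exhibiting a surjection-forcing map from $\core{\prod_D G[D]}$ onto $G[\core P]$ and invoking minimality of the core, mirroring the orthogonality arguments of Section~\ref{sec:BuildingProduct}.
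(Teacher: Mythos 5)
Your proposal has a genuine gap at its central step. Everything hinges on the claim that $\prod_{D\in\mathcal D} G[D] \to G[P]$, where $P=\prod_{D\in\mathcal D} D$, i.e.\ that the gadget expansion commutes with products up to hom-equivalence; you yourself flag this as the heart of the argument, but the justification you sketch is invalid. \Cref{thm:D1*D2} applies only to homomorphisms whose \emph{domain} is a gadget expansion $G[D_1]$ of a digraph: its proof analyzes how the gadget copies of the domain can map. The product $\prod_{D\in\mathcal D} G[D]$ is not a gadget expansion of any digraph --- its vertices are arbitrary tuples mixing digraph vertices and gadget-internal vertices across coordinates --- so ``restriction to the digraph vertices'' of a projection $\prod_{D\in\mathcal D} G[D]\to G[D_i]$ is not a meaningful invocation of that theorem, and the rigidity analysis says nothing about such maps (the gadget-like subgraphs of the product are \emph{products} of gadgets, not gadgets). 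Edge-replacement constructions such as $G[\cdot]$ commute with disjoint unions, not with products; in general only the canonical one-way map $G[P]\to\prod_{D\in\mathcal D} G[D]$ (the paper's subgraph observation) exists. Note also that your claim would establish the \emph{equality} $\bigl|\core{\prod_{D\in\mathcal D} G[D]}\bigr| = \bigl|G[\core{P}]\bigr|$, strictly stronger than the stated inequality, and nothing in the proposal supports it. Moreover, without that claim the facts you do establish correctly --- the inclusion $G[P]\subseteq \prod_{D\in\mathcal D} G[D]$, the equivalence $G[P]\simeq G[\core{P}]$, and that $G[\core{P}]$ is a core --- do not imply the inequality: a subgraph inclusion $A\subseteq B$ gives no lower bound on $|\core{B}|$ in terms of $|\core{A}|$ (take $A=C_5$ and $B$ the disjoint union of $C_5$ and $K_3$, whose core is $K_3$). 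A smaller slip: the endomorphism of $\core{P}$ induced by an endomorphism of $G[\core{P}]$ is an automorphism, not necessarily the identity, though your conclusion that $G[\core{P}]$ is a core survives this correction.

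The paper avoids the problematic direction entirely, and this is the idea you are missing. It composes the inclusion $G[P]\hookrightarrow \prod_{D\in\mathcal D} G[D]$ first with the retraction $\psi$ onto the core and then with each projection, obtaining maps $\psi_i'\colon G[P]\to G[D_i]$. These \emph{do} have a gadget expansion as domain, so \Cref{thm:D1*D2} applies and yields digraph homomorphisms $P\to D_i$, which assemble by the universal property of the product into an endomorphism of $P$; the image of this endomorphism contains a copy $H$ of $\core{P}$. Applying the gadget-rigidity part of \Cref{thm:D1*D2} once more, the $\psi$-image of the relevant gadget copies --- which lies inside $\core{\prod_{D\in\mathcal D} G[D]}$ --- contains a copy of $G[H]$, and the inequality $\bigl|G[\core{P}]\bigr| = \bigl|G[H]\bigr| \le \bigl|\core{\prod_{D\in\mathcal D} G[D]}\bigr|$ follows with no need for $\prod_{D\in\mathcal D} G[D]\simeq G[P]$. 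Your closing fallback sentence gestures toward exactly this kind of containment argument, but it is not developed; as written, the proposal does not prove the theorem.
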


\begin{proof}
	Let $\psi$ be a mapping of $\prod\limits_{D \in \mathcal{D}}G[D]$ to (one of) its core. Let $\psi_i$ be the projection of $\core{\textstyle\prod\limits_{D \in \mathcal{D}} G[D]}$ to $G[D_i]$. Since $G[\prod\limits_{D \in \mathcal{D}} D]$ is a subgraph of $\prod\limits_{D \in \mathcal{D}}G[D]$, the mapping $\psi_i$ can be reduced to a mapping $\psi_i'$ of $G[\prod\limits_{D \in \mathcal{D}}D]$ to $G[D_i]$.
	
	Since each $D_i$ is 4-colorable, the product $\prod\limits_{D \in \mathcal{D}}{D}$ is also 4-colorable. Thus, by \Cref{thm:D1*D2}, $\psi'_i$ induces a mapping of $\prod\limits_{D \in \mathcal{D}}D$ to $D_i$. Hence, $\psi$ induces a mapping of $\prod\limits_{D \in \mathcal{D}}D$ to itself which should contain a copy, say $H$, of its core. Applying \Cref{thm:D1*D2} again we conclude that the image of $H$ must contain a copy of $G[H]$.
\end{proof}

As a corollary, when we take the family $DM_{h, l}$ consisting of the cones of decreasing mountains, and build $G[\cone{P}]$ for each $P\in  DM_{h, l}$, their product contains $G[H]$ where $H$ is the digraph $\prod\limits_{P\in D_{h,l}}{\cone P}$. Observe that if $P$ has $n$ vertices, then $\cone P$ has $n+1$ vertices and $3n-1$ edges. Hence $G[\cone{P}]$ has $n+1+5(3n-1)=16n-4$ vertices. Bounding the number of vertices of graphs in $DM_{2k, k}$ below and above by $k^2+5k+2$ and  $3k^2+3k+2$ we have the following conclusion based on the core of the product  $\prod\limits_{P\in D_{2k,k}}{\cone P}$.

\begin{corollary}
	There is a family of $\binom{2k}{k}$ graphs each of which has its number of vertices between $16k^2+80k+12$ and $48k^2+48k+12$  where the core of the product has more than $4(16k^2+80k+2)^{\binom{2k}{k}}$.
\end{corollary}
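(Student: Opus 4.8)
The plan is to take as our family the graphs $G[\cone M_\partial]$ as $M_\partial$ ranges over $DM_{2k+2,k}$. Since $|DM_{h,\ell}| = \binom{h-2}{\ell}$, there are exactly $\binom{2k}{k}$ of them. First I would check the hypothesis needed to pass from digraphs to graphs: the underlying graph of each $\cone M_\partial$ is a path together with one universal vertex, hence has chromatic number $3$ and is in particular $4$-colorable, so the preceding theorem applies to the family $\mathcal D = \{\cone M_\partial : M_\partial \in DM_{2k+2,k}\}$.

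Next I would settle the vertex counts. A mountain $M_\partial$ on $n$ vertices gives a cone $\cone M_\partial$ on $n+1$ vertices with $3n-1$ arcs (the $n-1$ arcs of the path together with the $2n$ arcs of the digons to the apex), so the gadget replacement produces $|G[\cone M_\partial]| = (n+1) + 5(3n-1) = 16n-4$. Substituting the known range $k^2+5k+2 \le |M_\partial| \le 3k^2+3k+2$ for decreasing mountains in $DM_{2k+2,k}$ then yields the stated range for the order of each graph in the family.

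The core estimate is the heart of the argument. I would set $H = \prod_{M_\partial \in DM_{2k+2,k}} \cone M_\partial$ and first invoke \Cref{thm:M-partial}; its hypothesis $h > \ell + 2$ holds here since $2k+2 > k+2$, so $H$ is a core and $\core H = H$. Feeding $\mathcal D$ into the preceding theorem gives
$$\Big|\core{\textstyle\prod_{M_\partial} G[\cone M_\partial]}\Big| \ \ge\ \big| G[\core H] \big| \ =\ |G[H]|,$$
so it remains to bound $|G[H]|$ from below. Because the tensor product multiplies both vertex and arc sets coordinatewise, $|V(H)| = \prod_{M_\partial}(|M_\partial|+1)$ and $|A(H)| = \prod_{M_\partial}(3|M_\partial|-1)$, whence $|G[H]| = |V(H)| + 5|A(H)|$. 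Bounding every factor below by its value at the smallest mountain produces a lower bound of the announced double-exponential shape $c\cdot(\mathrm{poly}(k))^{\binom{2k}{k}}$.

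I expect two points to require care. The conceptual one — that $H$ is a core, which is exactly what licenses replacing $\core H$ by $H$ and thus transferring the digraph lower bound to graphs — is already supplied by \Cref{thm:M-partial} together with the three preceding propositions verifying conditions (a) and (b) on $DM_{h,\ell}$. The delicate one is purely arithmetic: tracking constants so that the sum $|V(H)| + 5|A(H)|$ compares against the stated $4(16k^2+80k+2)^{\binom{2k}{k}}$ with every inequality pointing the right way. Here I would observe that the dominant contribution is the arc term $5|A(H)| = 5\prod_{M_\partial}(3|M_\partial|-1)$, so matching the precise base is where one must be most careful; if one wants the base to reflect the full order $16|M_\partial|-4$ of an individual $G[\cone M_\partial]$ rather than only $3|M_\partial|-1$, the additional strength would have to come from arguing that $\prod_{M_\partial} G[\cone M_\partial]$ is itself (close to) a core, which I regard as the main obstacle to pinning down the tightest constant.
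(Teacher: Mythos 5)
Your proposal is, in substance, exactly the paper's proof: the same family $\set{G[\cone M_\partial] : M_\partial \in DM_{2k+2,k}}$, the same chain of results (\Cref{thm:M-partial} to get that $H=\prod_{M_\partial}\cone M_\partial$ equals its own core, then the final theorem of \Cref{sec:Digraph-Graph} to get $|\core{\prod_{M_\partial} G[\cone M_\partial]}| \geq |G[H]|$), and the same count $|G[\cone M_\partial]| = (|M_\partial|+1)+5(3|M_\partial|-1) = 16|M_\partial|-4$.

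The obstacle you flag at the end is genuine, but it is a defect of the corollary as stated, not of your argument: the paper's own justification is precisely the computation you outline, and it suffers from the same shortfall. Concretely, this route yields
\[
\Bigl|\core{\textstyle\prod_{M_\partial} G[\cone M_\partial]}\Bigr| \;\geq\; |G[H]| \;=\; \prod_{M_\partial}\bigl(|M_\partial|+1\bigr) \;+\; 5\prod_{M_\partial}\bigl(3|M_\partial|-1\bigr),
\]
whose dominant term has per-factor base roughly $3|M_\partial|$, i.e.\ a bound of the shape $5\bigl(3k^2+15k+5\bigr)^{\binom{2k}{k}}$. The stated bound $4(16k^2+80k+2)^{\binom{2k}{k}}$ has per-factor base roughly $16|M_\partial|$ --- the order of $G[\cone M_\partial]$ itself --- and therefore exceeds what this argument delivers by a factor of about $(16/3)^{\binom{2k}{k}}$. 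As you observe, reaching it would require showing that $\prod_{M_\partial} G[\cone M_\partial]$ is (essentially) its own core, which neither you nor the paper proves; the likely source of the overclaim is a conflation of $G[\prod_{M_\partial} \cone M_\partial]$ with $\prod_{M_\partial} G[\cone M_\partial]$, even though $G[\cdot]$ does not commute with products (the former is only a subgraph of the latter). A minor discrepancy of the same kind affects the vertex range: $16(k^2+5k+2)-4 = 16k^2+80k+28$, so your substitution gives a range shifted by $16$ from the stated $[16k^2+80k+12,\,48k^2+48k+12]$, whose endpoints only come out if the bounds $k^2+5k+2$ and $3k^2+3k+2$ are read as bounding $|\cone M_\partial|$ rather than $|M_\partial|$. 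None of this affects the qualitative conclusion --- a core of size exponential in $\binom{2k}{k}$ --- which your argument, like the paper's, does establish.
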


\section*{Acknowledgement}
This work has received support under the program ``Investissement d'Avenir" launched by the French Government and implemented by ANR, with the reference ``ANR‐18‐IdEx‐0001" as part of its program ``Emergence".\\

We would also like to thank Leonid Libkin for introducing the problem to us. 

\bibliographystyle{plain}
\bibliography{Bibtex2023}

\end{document}